\RequirePackage{fix-cm}
\documentclass{article}
\usepackage{iclr2027_conference,times}
\usepackage[T1]{fontenc}
\usepackage[utf8]{inputenc}
\usepackage{amsmath,amssymb,amsthm,mathtools}
\usepackage{microtype,booktabs,array,tabularx,needspace,ragged2e,enumitem}
\usepackage{graphicx,xcolor,tikz}
\usepackage{placeins,float,wrapfig}
\usetikzlibrary{arrows.meta,positioning,calc,fit}
\usepackage{etoc}
\usepackage{hyperref,url}
\hypersetup{hidelinks,pdftitle={Stochastic Subgradient Descent at Sharply Repulsive Points: A Bounded Noise Counterexample and Gaussian Noise Avoidance},pdfauthor={}}
\newtheoremstyle{named}{6pt}{6pt}{\itshape}{}{\bfseries}{.}{.5em}{\thmname{#1}\thmnumber{ #2}\thmnote{ {\normalfont(#3)}}}
\theoremstyle{named}
\newtheorem{theorem}{Theorem}[section]
\newtheorem{lemma}[theorem]{Lemma}
\newtheorem{proposition}[theorem]{Proposition}
\newtheorem{corollary}[theorem]{Corollary}
\newtheoremstyle{namedremark}{6pt}{6pt}{}{}{\bfseries}{.}{.5em}{\thmname{#1}\thmnumber{ #2}\thmnote{ {\normalfont(#3)}}}
\theoremstyle{namedremark}
\newtheorem{definition}[theorem]{Definition}
\newtheorem{remark}[theorem]{Remark}

\newcommand{\R}{\mathbb R}
\newcommand{\E}{\mathbb E}
\newcommand{\PP}{\mathbb P}
\newcommand{\one}{\mathbf 1}
\newcommand{\pc}{\partial_C}
\newcommand{\pf}{\partial_F}
\newcommand{\pl}{\partial_L}
\newcommand{\inner}[2]{\langle #1,#2\rangle}
\newcommand{\ip}[2]{\langle #1,#2\rangle}
\newcommand{\norm}[1]{\lVert #1\rVert}
\newcommand{\op}{\mathrm{op}}
\DeclareMathOperator{\conv}{conv}
\DeclareMathOperator{\Cov}{Cov}
\DeclareMathOperator{\Unif}{Unif}

\DeclareMathOperator{\tr}{tr}
\allowdisplaybreaks[1]

\iclrfinalcopy
\hypersetup{pdfauthor={Shu Li and Jiang Hu}}
\title{Stochastic Subgradient Descent\\at Sharply Repulsive Points:\\A Bounded Noise Counterexample\\and Gaussian Noise Avoidance}
\author{Shu Li\\
Qiuzhen College\\
Tsinghua University\\
Beijing, China\\
\texttt{lishu26@mails.tsinghua.edu.cn}
\And
Jiang Hu\thanks{Corresponding author.}\\
Yau Mathematical Sciences Center\\
Tsinghua University\\
Beijing, China\\
\texttt{jianghu@tsinghua.edu.cn}}
\begin{document}
\etocdepthtag.toc{main}
\maketitle
\lhead{}
\begin{abstract}
Motivated by the open question of Bianchi, Hachem, and Schechtman \citep[Remark 4]{BHS}, we show that in the non-weakly convex definable setting, omnidirectional noise with conditional fourth-moment control does not suffice for universal almost-sure avoidance of sharply repulsive critical points. We demonstrate this failure by constructing a globally Lipschitz, coercive, definable objective that is not weakly convex. For this objective and its sufficiently small linear perturbations, stochastic subgradient descent with independent noise uniformly distributed on a ball in $\R^2$ converges \emph{with probability $1$} to a nonminimal sharply repulsive critical point from every initial point in a specified ball. A two-cycle argument establishes uniform bounds on the rescaled iterates, yielding convergence to the sharply repulsive critical point.
In contrast, for locally Lipschitz definable objectives, Gaussian noise ensures simultaneous almost-sure avoidance of any finite collection of sharply repulsive points. Through a construction of difference quotient functions, we show that convergence to any such point with positive probability would produce a stationary distribution with expected descent $0$, whereas a global positive lower bound on the Fr\'echet subgradient norms of the limiting functions forces strictly positive expected descent under the same distribution. This contradiction proves avoidance, with the everywhere positive Gaussian density playing a key role.

\end{abstract}
\section{Introduction}\label{sec:introduction}
Convergence to a critical point does not ensure local optimality in nonconvex optimization. This distinction matters for stochastic methods used to train neural networks, whose parameterized losses are generally nonconvex \citep{LBH,BCN}. A central question is whether their iterates avoid nonminimal critical points, and which assumptions on the objective and the noise suffice for such avoidance.

For smooth objectives, saddle points have received considerable attention in the study of neural network optimization \citep{Dauphin}. There are now several ways to obtain avoidance or escape guarantees under suitable assumptions. Stable-manifold arguments show that gradient descent with random initialization almost surely avoids strict saddles when the update map has the required regularity \citep{Lee}. Stochastic perturbations provide another mechanism: under strict-saddle assumptions, noisy gradient methods can escape saddles and reach approximate local minima \citep{Ge}; carefully timed perturbations also yield efficient convergence to approximate second-order stationary points \citep{Jin}. These results connect local geometry with algorithmic behavior. They do not, however, make every critical point benign, and their smooth geometric arguments do not directly cover the nonsmooth operations used in many learning models.

ReLU activations, max pooling, and nonsmooth regularizers lead naturally to objectives that are both nonconvex and nonsmooth. At such points, gradients are replaced by subdifferentials, and even the meaning of criticality becomes important. Definable geometry offers a broad setting in which these objectives can be studied without allowing arbitrary oscillatory behavior. It includes semialgebraic models and, in suitable o-minimal structures, many other functions used in learning. The stochastic subgradient convergence theory of \citet{DDKL} exploits this structure to obtain guarantees of criticality. The remaining question is whether the critical points permitted by such guarantees are local minima, or whether the stochastic iteration can converge to points with lower objective values arbitrarily nearby.

The nonsmooth setting introduces an obstruction beyond the familiar smooth saddle. The Clarke subdifferential convexifies limiting slopes, so it can contain zero even when the limiting subdifferential excludes zero. A point can consequently be Clarke critical without possessing a limiting subgradient equal to zero. Sharply repulsive critical points combine this discrepancy with an active manifold on which the point is a local minimizer. For a nonminimal point of this kind, favorable behavior along the manifold does not imply local optimality in the ambient space. Nor does the word repulsive, which describes subgradient geometry, itself prove that a discrete stochastic algorithm avoids the point. The algorithm moves in finite steps, and the interaction between those steps and the noise must be analyzed explicitly.

There are positive results under additional structure. Generic convergence to local minima is known for definable weakly convex objectives \citep{BHS} and, more broadly, under subdifferential regularity \citep{DDJ}. \citet[Remark~4]{BHS} ask whether this conclusion extends beyond weak convexity to locally Lipschitz definable functions; the required avoidance concerns both active strict saddles and sharply repulsive critical points.\footnote{The question is problem~75 in the Open Problems in Operations Research collection: \url{https://ai-in-math-or.github.io/open-problems-in-or/}. The repository's \href{https://github.com/AI-in-Math-OR/open-problems-in-or/blob/main/data/llm_math_export/solution_progress/W4309864804_p0/partial_progress/75.pdf}{regular-case progress note} treats the subdifferentially regular subclass.} Related work identifies pathological Lipschitz and path-differentiable subgradient dynamics \citep{DDpath,RZ}, studies constant-step instability \citep{JL} and deterministic convergence rates \citep{CS}, and develops pathwise saddle avoidance under locally invertible sampled updates \citep{QMMZ}. These results motivate a closer examination of the nonregular case, where Clarke criticality and limiting criticality can differ.

We study the sharply repulsive part of this question by comparing bounded and Gaussian noise, focusing on what the noise distribution contributes beyond centering, moment bounds, and excitation in every direction. The results concern nonconvergence to specified targets.

\subsection{Contributions}
\textbf{A counterexample under bounded noise.}
We resolve the open question of \citet[Remark~4]{BHS} in the negative: omnidirectional noise with conditional fourth-moment control does not ensure avoidance of nonminimal sharply repulsive points. Our counterexample is an explicit globally Lipschitz, coercive, semialgebraic objective on $\R^2$: with independent noise uniform on a ball, the iterates converge to such a point with probability $1$, for every initial point in a specified ball and all sufficiently small linear perturbations of the objective (Theorem~\ref{p:thm:main} and Corollary~\ref{p:cor:coercive}). A two-cycle argument establishes uniform boundedness of the rescaled iterates under bounded noise, yielding $\norm{x_n}=O(\gamma_n)$.

\textbf{Gaussian noise avoidance through difference quotient functions.}
For locally Lipschitz definable objectives, Gaussian noise prevents convergence to any prescribed point with $0\notin\pl f(x_*)$. This holds for every adapted Clarke selection and yields avoidance of finite target sets and, for almost every linear perturbation, of all sharply repulsive points (Corollary~\ref{g:cor:finite-generic}). Our difference quotient construction transfers a local lower bound on Fr\'echet subgradient norms to a global bound for limiting functions and converts convergence with positive probability into a stationary distribution with mean descent $0$. The inherited bound and Gaussian escape force strictly positive mean descent under the same distribution, giving a contradiction.

\subsection{Setting and subdifferentials}\label{sec:setup}
For a locally Lipschitz definable $f:\R^d\to\R$, consider
\begin{equation}\label{g:eq:algorithm}
X_{n+1}=X_n-\gamma_nV_n+\gamma_n\eta_{n+1},
\qquad V_n\in\pc f(X_n).
\end{equation}
The iterates and noises are defined on a probability space $(\Omega,\mathcal F,\PP)$. Appendix~\ref{g:sec:probability} specifies the spaces and measures for all expectations below.
A function is definable if its graph belongs to a fixed o-minimal expansion of the ordered real field, with real parameters allowed \citep[Section~2]{vddm}. Such a structure is closed under Boolean operations, products, and projections, and its subsets of $\R$ are finite unions of points and intervals. Semialgebraic functions, including the construction below, are definable; Appendix~\ref{g:sec:prelim} gives the complete definition.

\begin{definition}[Subdifferentials; {\citealp[Definition~8.3 and Theorem~9.61]{RW}}]
For locally Lipschitz $f$, the Fr\'echet subdifferential consists of the vectors $v$ satisfying
$f(x+h)\ge f(x)+\ip{v}{h}+o(\norm{h})$.
The limiting subdifferential $\pl f(x)$ consists of limits of Fr\'echet subgradients at points tending to $x$. The Clarke subdifferential is $\pc f(x)=\conv\pl f(x)$.
\end{definition}
Thus $\pf f(x)\subseteq\pl f(x)\subseteq\pc f(x)$, but $0\in\pc f(x)$ need not imply $0\in\pl f(x)$.

\begin{definition}[Sharply repulsive critical point; {\citealp[Definition~6.2.4]{SchechtmanThesis}}]\label{def:sharp-main}
An active manifold $M$ at a Clarke critical point $x$ is a smooth manifold through $x$ such that $f|_M$ is smooth and, near $x$ off $M$, all Clarke subgradients have norm bounded below by a common positive constant. The point is sharply repulsive if it admits such an $M$, is a local minimizer of $f|_M$, and satisfies $0\in\pc f(x)\setminus\pl f(x)$.
\end{definition}

\section{A bounded-noise counterexample}\label{sec:counterexample}
We answer the open question of \citet[Remark~4]{BHS} in the negative: bounded noise can lead to almost-sure convergence to a nonminimal sharply repulsive point. As summarized in Figure~\ref{p:fig:proof-roadmap}, the proof combines subdifferential calculations at $0$ with uniform bounds on the rescaled iterates from the two-cycle
argument; the vanishing step size then forces convergence. A coercive modification preserves both the local geometry and these trajectories.

\subsection{The counterexample and its geometry}\label{p:sec:main-geometry}
We define the following function based on the Euclidean norm and some piecewise linear functions:  
\begin{equation}\label{p:eq:function}
f(x,y)=\sqrt{x^2+y^2}-\min\{x,y\}-2\max\{0,-x-y\}.
\end{equation}
It is easy to check that $0$ is Clarke critical without being limiting critical. For $u\in\R^2$, set
\begin{equation}\label{p:eq:tilt}
f_u(z)=f(z)-\inner{u}{z},\qquad z=(x,y).
\end{equation}
The function $f_u$ is a linear tilt of $f$, obtained by subtracting the linear function $z\mapsto\inner{u}{z}$ \citep[Section~12, p.~104]{Rockafellar1970}. We apply~\eqref{g:eq:algorithm} to $f_u$, write its iterates and subgradients as $x_n$ and $v_n$, and set $a=(1,0)$ and $b=(0,1)$.
\begin{theorem}[A bounded-noise counterexample]\label{p:thm:main}
Fix $u\in\R^2$ with $\norm{u}<1/200$. The function $f_u$ in~\eqref{p:eq:tilt} is globally Lipschitz and definable, but is not weakly convex. In~\eqref{g:eq:algorithm} applied to $f_u$, use $\gamma_n=1/(n+100)$ and independent noise $\eta_n\sim\Unif(B(0,1/200))$. Let $x_0$ be independent of the noise sequence and satisfy $x_0\in\gamma_0\overline B(a,1/8)$ almost surely. Then every Clarke subgradient selection adapted to $\mathcal F_n=\sigma(x_0,\eta_1,\ldots,\eta_n)$ gives
\[
\PP\!\left(\lim_{n\to\infty}x_n=0\right)=1,
\]
where $0$ is a sharply repulsive Clarke critical point with active manifold $M=\{0\}$ and is \mbox{not a local minimizer.}
\end{theorem}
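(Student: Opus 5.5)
The plan is to exploit positive homogeneity. The function $f$ in~\eqref{p:eq:function} is positively homogeneous of degree $1$, so its Clarke subdifferential at $z\neq0$ depends only on $z/\norm{z}$. Hence $\pc f_u(z)=\pc f(z/\norm z)-u$. I will work with the rescaled iterates $z_n=x_n/\gamma_n$, which satisfy
\[
z_{n+1}=\tfrac{\gamma_n}{\gamma_{n+1}}\bigl(z_n-v_n+\eta_{n+1}\bigr),\qquad \tfrac{\gamma_n}{\gamma_{n+1}}=1+\tfrac1{n+100}\le 1.01 .
\]
The goal is a deterministic statement: for every noise realization with $\norm{\eta_n}\le 1/200$, the iterates $z_n$ stay in $\overline B(a,1/8)\cup\overline B(b,1/8)$ and alternate between the two balls. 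Then $\norm{x_n}\le 2\gamma_n\to0$ surely, which is stronger than almost surely.

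\textbf{Static claims.} These are routine.
\begin{itemize}[leftmargin=*]
\item $f_u$ is a sum of globally Lipschitz semialgebraic pieces, so it is globally Lipschitz and definable.
\item Along the line $t(1,1)$, $f$ has slope $3-\sqrt2$ for $t<0$ and $\sqrt2-1$ for $t>0$. This is a downward (concave) kink, which is incompatible with weak convexity; adding a linear term does not change this.
\item The same restriction gives $f_u(t(1,1))<0=f_u(0)$ for small $t<0$ (since $\norm u<1/200$), so $0$ is not a local minimizer.
\item For Clarke criticality, list the gradients of $f$ in the open sectors: e.g.\ $e-b$ where $x>y,\ x+y>0$, with $e$ the unit radial vector. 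Check that their convex hull contains a ball of radius larger than $1/200$ around $0$.
\item For $0\notin\pl f_u(0)$, use homogeneity. Limiting subgradients at $0$ are (limits of) Fréchet subgradients at unit vectors, shifted by $-u$. Check that every Fréchet subgradient of $f$ at a unit vector has norm bounded below by a constant exceeding $1/200$. At the kink rays (where $x=y$ or $x+y=0$) Fréchet subgradients exist only where the kink is convex, so only finitely many rays need checking.
\item The same bound, together with homogeneity, gives the uniform lower bound off $M=\{0\}$, so $M=\{0\}$ is an active manifold and $0$ is sharply repulsive.
\end{itemize}

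\textbf{Dynamics.} First, the balls $\overline B(a,1/8)$ and $\overline B(b,1/8)$ lie in the open regions where $f$ is $C^1$: $x>y,\ x+y>0$ and $y>x,\ x+y>0$ respectively. So every Clarke selection, whether adapted or not, equals the gradient there. On the first ball,
\[
v_n=\frac{z_n}{\norm{z_n}}-b-u,\qquad z_n-v_n=\bigl(\norm{z_n}-1\bigr)\frac{z_n}{\norm{z_n}}+b+u .
\]
By symmetry, the roles of $a$ and $b$ swap on the second ball. Thus one step sends a point near $a$ to a point near $b$. The error has two parts: the radial deviation $\rho_n=\norm{z_n}-1$, now pointing (nearly) in the $a$-direction, i.e.\ tangentially at $b$; and additive terms $u+\eta_{n+1}$ together with the factor $\gamma_n/\gamma_{n+1}$.

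\textbf{Main obstacle: the two-cycle bound.} A single step does not contract the distance to $b$, since it only reproduces $\abs{\rho_n}$. I would track the pair consisting of the tangential and radial deviations. The key observation is that a tangential deviation $\delta$ at $b$ has radial size only $\sqrt{1+\delta^2}-1=O(\delta^2)$. Consequently, after two steps the radial part is squared (with $\abs{\delta}\le1/8$ this gives a factor $\le1/16$), plus additive errors of size about $\norm u+\norm\eta+0.01\le 0.02$. The tangential part is then replaced by the previous radial part.

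Concretely, I would prove by induction the invariant that $z_n$ lies within distance $1/8$ of $a$ (for $n$ even) or of $b$ (for $n$ odd), with radial deviation $\le r_*$ for some explicit $r_*$ around $0.05$. The difficulty is bookkeeping the constants so that the contraction $r\mapsto Cr^2+0.02$ and the growth factor $1.01$ close the induction with the initial condition $z_0\in\overline B(a,1/8)$. The first step needs separate care because $\rho_0$ may be as large as $1/8$.

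If this explicit bound fails, I would shrink the tracked radial invariant, or combine the two steps into a single map $z_n\mapsto z_{n+2}$ and show that it maps a small set around $a$ into itself.
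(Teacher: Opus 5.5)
Your route is the paper's. You rescale by $\gamma_n$, use homogeneity to get $z_{n+1}=q_n\{T(z_n)+u+\eta_{n+1}\}$ on the two smooth cones, and close a two-cycle induction because the radial error created near $a$ becomes tangential at $b$. The radial error at the odd iterate is then second order, as in the paper's estimate $|\norm{b+e'}-1-e'_2|\le\norm{e'}^2/(2(1-R))$. Your line $t(1,1)$ is a valid, more economical substitute for the paper's line $(1+t,-1+t)$: it gives both the concave kink ($3-\sqrt2$ versus $\sqrt2-1$) and the descent ray. Two steps, however, do not go through as written.

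\textbf{Clarke versus Fr\'echet bounds.} The active-manifold condition needs a lower bound on \emph{Clarke} subgradients off $M=\{0\}$, and your Fr\'echet bound does not supply it. A Clarke set at a kink is the convex hull of the adjacent gradients, and convex combinations of vectors of norm at least $\delta$ need not stay away from $0$. This matters on the rays $x+y=0$. There the kink of $-2\max\{0,-x-y\}$ is concave, so $\pf f=\varnothing$, while $\pc f$ is a nondegenerate segment. For instance, at $z/\norm z=(c,-c)$ with $c=1/\sqrt2$ it is $\{(c,-c-1)+t(1,1):0\le t\le2\}$. You need the direct check of the paper's Case~3: here the first coordinate is $c+t\ge c$. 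Similarly, for $0\notin\pl f_u(0)$ you cannot restrict to Fr\'echet subgradients at nonzero points. Sequences with $x_j=0$ require $0\notin\pf f_u(0)$. This follows from homogeneity, since $0\in\pf f_u(0)$ would force $f_u\ge0$ on $\R^2$, contradicting your descent ray. Alternatively use the paper's stronger $\pf f_u(0)=\varnothing$.

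\textbf{The invariant fails at $n=1$.} Your target, $z_n$ alternately in $\overline B(a,1/8)$ and $\overline B(b,1/8)$, is false at the first step. With $u=0$, $\eta_1=0$, $z_0=(9/8,0)$ one gets $z_1=\tfrac{101}{100}(1/8,1)$ and $\norm{z_1-b}\approx0.1267>1/8$. Admissible noise and tilt push this to about $0.137$. You flag the first step, but the repair must be built into the induction. The paper makes the regions asymmetric, $z_{2j}\in\overline B(a,1/8)$ and $z_{2j+1}\in B(b,3/20)$, and uses only the even hypothesis. That hypothesis gives $|\norm{z}-1|\le1/8$ and $|w_2|\le1/7$, hence $|e'_2|<1/25$. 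The radial error at the odd step is then below $3/50$, and the next iterate returns to within $807/10000$ of $a$. So no separate radial invariant and no special first step are needed. If you keep your symmetric design, carry $z_1$ with its own bounds. These are distance below $0.147$ from $b$ and, by the same Taylor estimate, radial error at most about $0.051$, so $r_*$ must be somewhat larger than $0.05$, e.g.\ $0.06$. From $n=2$ on, your invariant then closes with room to spare.
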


Theorem~\ref{p:thm:main} resolves the open question of \citet[Remark~4]{BHS} \textbf{in the negative}.
The proof of Theorem~\ref{p:thm:main} is completed in Appendix~\ref{p:sec:quantifiers}, using the geometry and two-cycle argument in Appendices~\ref{p:sec:properties}--\ref{p:sec:cycle}. The probability is taken for each fixed admissible tilt. The initial region contains the nonempty open set $\gamma_0B(a,1/8)$, and the tilt region is open. Thus the failure is not confined to a null set of tilts or a single initial point.

Lemma~\ref{p:lem:function-class} establishes the required function class. Lemma~\ref{p:lem:off-origin} bounds all Clarke subgradients away from zero off $M=\{0\}$, making $M$ an active manifold. Since $f_u|_M$ is constant, $0$ minimizes this restriction. Lemmas~\ref{p:lem:clarke-criticality} and~\ref{p:lem:sharp-repulsiveness} establish Clarke criticality, exclude limiting criticality, and give a descending ray:
\[
\pf f_u(0)=\varnothing,\qquad
0\in\pc f_u(0)\setminus\pl f_u(0),\qquad
f_u(-t,-t)<f_u(0)\quad(t>0).
\]
These conclusions identify $0$ as a nonminimal sharply repulsive critical point. The remaining task is to prove convergence to $0$.

\subsection{Convergence via rescaled iterates}\label{p:sec:main-trapping}
Set $z_n=x_n/\gamma_n$. We use a two-cycle argument to keep these rescaled iterates alternately in two bounded regions for every admissible noise sequence. Returning to the original scale gives $\norm{x_n}=O(\gamma_n)\to0$.
\begin{lemma}[Two-cycle argument for the rescaled iterates]\label{p:lem:normalized-trapping}
Fix $\norm{u}<1/200$, $x_0\in\gamma_0\overline B(a,1/8)$, and any noise sequence with $\norm{\eta_n}\le1/200$ for all $n\ge1$. For $\gamma_n=1/(n+100)$, \mbox{the rescaled iterates satisfy}
\begin{equation}
z_{2j}\in\overline B(a,1/8),\qquad
z_{2j+1}\in B(b,3/20),\qquad j\ge0.
\label{p:eq:invariant}
\end{equation}
In particular, $\norm{z_n}\le23/20$, and every physical iterate lies in a smooth cone, so the Clarke subgradient along the trajectory is unique.
\end{lemma}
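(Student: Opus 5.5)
The plan is a direct induction on $n$. The first step is to write the rescaled recursion explicitly. Since $\gamma_n/\gamma_{n+1}=(n+101)/(n+100)=1+\gamma_n$, the update~\eqref{g:eq:algorithm} becomes
\[
z_{n+1}=(1+\gamma_n)\bigl(z_n-v_n+\eta_{n+1}\bigr),\qquad v_n\in\pc f_u(x_n).
\]
The second step is to identify the two smooth cones. The set $\overline B(a,1/8)$ lies in $\{x>0,\ y<x,\ x+y>0\}$, away from the origin, and $B(b,3/20)$ lies in $\{y>0,\ x<y,\ x+y>0\}$. Both sets are cones after scaling, so $x_n=\gamma_nz_n$ lies in the same open region as $z_n$. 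On these regions $f$ is smooth, which makes the Clarke subdifferential a singleton (this gives the final claim of the lemma), and the gradients are
\[
v_n=\frac{z_n}{\norm{z_n}}-b-u \quad\text{near }a,\qquad
v_n=\frac{z_n}{\norm{z_n}}-a-u \quad\text{near }b.
\]
Writing $w_n=z_n-z_n/\norm{z_n}=(\norm{z_n}-1)\,z_n/\norm{z_n}$, which is purely radial, the recursion reads
\[
z_{n+1}=(1+\gamma_n)\bigl(b+w_n+u+\eta_{n+1}\bigr)
\]
on even steps, with $b$ replaced by $a$ on odd steps. So the even-to-odd step maps a neighbourhood of $a$ to one of $b$, and back.

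A crude triangle-inequality bound will not close the cycle. Going from $z_{2j}$ to $z_{2j+1}$ it gives distance to $b$ at most
\[
\tfrac18+\tfrac1{200}+\tfrac1{200}+\gamma_n\norm{\cdot}\approx0.146<\tfrac{3}{20},
\]
which is fine. The return trip then only gives about $0.16+0.012>1/8$, which fails. This is the main obstacle.

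I would resolve it by exploiting that $w_n$ is radial, hence nearly orthogonal to the next target. For $z_{2j}\in\overline B(a,1/8)$, $w_{2j}$ is parallel to $z_{2j}$, whose angle with $a$ is at most $\arcsin(1/8)$. This gives $\norm{w_{2j}}\le1/8$ and $|\ip{w_{2j}}{b}|\le1/64$, so
\[
\norm{b+w_{2j}}^2=1+2\ip{w_{2j}}{b}+\norm{w_{2j}}^2\in\bigl[1-\tfrac1{32},\,1+\tfrac1{32}+\tfrac1{64}\bigr].
\]
After adding $u+\eta$ (norm at most $1/100$) and multiplying by $1+\gamma_n\le1+1/100$, this yields $\bigl|\norm{z_{2j+1}}-1\bigr|\le c$ with $c\approx0.05$. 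Consequently $\norm{w_{2j+1}}\le c$. On the return step,
\[
\norm{z_{2j+2}-a}\le c+\tfrac1{100}+\tfrac1{100}\cdot\tfrac{23}{20}\approx0.07<\tfrac18,
\]
which closes the induction.

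The induction hypothesis is $z_{2j}\in\overline B(a,1/8)$. The base case $z_0\in\overline B(a,1/8)$ holds by assumption. From the hypothesis I derive both $z_{2j+1}\in B(b,3/20)$ (via the crude bound) and $z_{2j+2}\in\overline B(a,1/8)$ (via the refined radial bound). The bound $\norm{z_n}\le1+3/20=23/20$ is then immediate. The only care needed is to verify the numerical constants with explicit margins and to check that the balls stay inside the stated smooth cones. These checks are routine arithmetic.
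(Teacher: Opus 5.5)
Your proposal is correct and follows the paper's route: an induction over two-step cycles in which a crude triangle-inequality bound places $z_{2j+1}$ in $B(b,3/20)\subset C_b$, and the near-orthogonality of the radial vector $w_{2j}$ to $b$ makes the radial error $\bigl|\norm{z_{2j+1}}-1\bigr|$ small enough to close the return to $\overline B(a,1/8)$. The only difference is how that radial error is bounded. You use the exact identity $\norm{b+w}^2=1+2\ip{w}{b}+\norm{w}^2$ with $|\ip{w}{b}|\le 1/64$, then absorb $u+\eta$ and the factor $1+\gamma_n$ by triangle inequalities, getting about $0.044$; the paper instead bounds $\norm{e'}$ and $|e'_2|$ for $e'=z'-b$ and applies a second-order Taylor remainder for the norm, getting $<3/50$, so your version is slightly more elementary and a bit sharper.
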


The proof of the two-cycle argument Lemma~\ref{p:lem:normalized-trapping} is in Appendix~\ref{p:sec:cycle}.
Together these results give $\norm{x_n}\le 23/[20(n+100)]\to0$.
\begin{figure}[!t]
\centering
\resizebox{.80\linewidth}{!}{
\definecolor{RMInk}{HTML}{20344A}
\definecolor{RMMeta}{HTML}{566677}
\definecolor{RMLine}{HTML}{AAB6C3}
\definecolor{RMPale}{HTML}{F5F7F9}
\definecolor{RMBlue}{HTML}{EEF4FA}
\providecommand{\RMtitle}[1]{{\sffamily\bfseries\color{RMInk}#1}}
\providecommand{\RMmeta}[1]{{\sffamily\fontsize{8.5}{10.5}\selectfont\color{RMMeta}#1}}
\begin{tikzpicture}[
  box/.style={draw=RMLine,rounded corners=3pt,line width=.55pt,
    fill=white,text=black!92,align=left,text width=7.1cm,inner sep=7.5pt,
    font={\fontsize{9.2}{11.4}\selectfont
      \setlength{\parindent}{0pt}\RaggedRight
      \hyphenpenalty=10000\exhyphenpenalty=10000
      \setlength{\abovedisplayskip}{3pt}\setlength{\belowdisplayskip}{3pt}
      \setlength{\abovedisplayshortskip}{3pt}\setlength{\belowdisplayshortskip}{3pt}}},
  wide/.style={box,text width=15.2cm},
  smallbox/.style={box,text width=3.05cm,inner sep=7pt,align=center,
    minimum height=2.6cm},
  arrow/.style={-{Latex[length=2mm,width=1.45mm]},draw=RMMeta,line width=.65pt}
]
\node[wide,fill=RMPale] (construct) {
  \RMtitle{Construct the objective $f_u$}\hfill\RMmeta{Section~\ref{p:sec:properties}}
  \[
    f_u(x,y)=\sqrt{x^2+y^2}-\min\{x,y\}-2\max\{0,-x-y\}
    -\langle u,(x,y)\rangle.
  \]
  \textbf{Globally Lipschitz, but not weakly convex.}
  Definable and positively homogeneous.
};
\node[box,anchor=north] (geometry)
  at ([xshift=-4.05cm,yshift=-6mm]construct.south) {
  \begin{minipage}[t][4.3cm][t]{\linewidth}\RaggedRight
  \RMtitle{Check the geometry at 0}\\[2pt]
  \RMmeta{Section~\ref{p:sec:geometry} -- Tool: variational geometry}\\[4pt]
  $M=\{0\}$ is an active manifold; $f_u|_M$ is constant.
  \[
    0\in\partial_C f_u(0)\setminus\partial_L f_u(0).
  \]
  The proof also gives $\partial_F f_u(0)=\varnothing$.
  \end{minipage}
};
\node[smallbox,anchor=north,fill=RMPale] (sharp)
  at ([xshift=-1.98cm,yshift=-5.5mm]geometry.south) {
  0 is\\[4pt]\textbf{sharply repulsive}.
};
\node[smallbox,anchor=north] (nonminimum)
  at ([xshift=1.98cm,yshift=-5.5mm]geometry.south) {
  \RMmeta{Separate check:}\\[3pt]
  $f_u(-t,-t)<0$\\
  for every $t>0$.\\[4pt]
  \textbf{Not a local minimizer}
};

\node[box,anchor=north] (rescale)
  at ([xshift=4.05cm,yshift=-6mm]construct.south) {
  \begin{minipage}[t][4.3cm][t]{\linewidth}\RaggedRight
  \RMtitle{Run SGD and rescale the iterates}\\[2pt]
  \RMmeta{Section~\ref{p:sec:cycle} -- Tool: homogeneity}\\[4pt]
  Start with the original algorithm:
  \[
  \begin{gathered}
  x_{n+1}=x_n-\gamma_n v_n+\gamma_n\eta_{n+1},\\
  v_n\in\partial_C f_u(x_n),\qquad\gamma_n=\frac1{n+100}.
  \end{gathered}
  \]
  Independent noise: $\eta_n\sim\operatorname{Unif}(B(0,1/200))$.
  Introduce the rescaled iterates
  \[z_n=\frac{x_n}{\gamma_n}.\]
  \end{minipage}
};
\node[box,below=5.5mm of rescale,minimum height=4.2cm] (trap) {
  \RMtitle{Two-cycle argument:\\bound the rescaled iterates}\\[2pt]
  \RMmeta{Sections~\ref{p:sec:cycle}--\ref{p:sec:quantifiers} -- Explicit estimates}\\[4pt]
  The rescaled iterates $z_n$ alternate between bounded regions around $a=(1,0)$ and $b=(0,1)$:
  \[
  \begin{array}{c@{\quad}c@{\quad}c}
    \overline B(a,1/8) & \longleftrightarrow & B(b,3/20)\\[-1pt]
    \text{even steps} & & \text{odd steps}
  \end{array}
  \]
  For every noise sequence with $\|\eta_n\|\le1/200$,
  \[
  \begin{gathered}
    \|z_n\|\le\frac{23}{20},\qquad\gamma_n\to0\\[2pt]
    \Longrightarrow\quad x_n=\gamma_nz_n\to0.
  \end{gathered}
  \]
};
\node[box,align=center,anchor=south,minimum height=1.15cm,fill=RMBlue] (badpoint)
  at (geometry.center |- trap.south) {
  \RMtitle{0 is a sharply repulsive, nonminimal\\Clarke critical point.}
};

\node[wide,fill=RMBlue,below=6mm of trap,xshift=-4.05cm] (probability) {
  \RMtitle{Conclude almost-sure convergence}\hfill\RMmeta{Section~\ref{p:sec:quantifiers}}\\[5pt]
  For every fixed $\|u\|<1/200$ and $x_0\in U:=\gamma_0B(a,1/8)$,
  \[
    \mathbb P(x_n\to0)=1,
    \qquad\text{although 0 is sharply repulsive and not a local minimizer.}
  \]
  The noise is centered, has bounded conditional fourth moments, and excites every direction.
};
\node[wide,fill=RMPale,below=5mm of probability] (coercive) {
  \RMtitle{Coercive modification: a global minimum exists}\hfill\RMmeta{Section~\ref{p:sec:coercive}}\\[4pt]
  Add a radial term away from $0$ and the trapped trajectories;
  the local proof and trajectories stay unchanged.
};

\draw[arrow] ([xshift=-4.05cm]construct.south) -- (geometry.north);
\draw[arrow] ([xshift=4.05cm]construct.south) -- (rescale.north);
\draw[arrow] ([xshift=-1.98cm]geometry.south) -- (sharp.north);
\coordinate (leftmerge) at ($(badpoint.north)+(0,3mm)$);
\draw[draw=RMMeta,line width=.65pt]
  (sharp.south) -- (sharp.south |- leftmerge) -- (leftmerge)
  -- (nonminimum.south |- leftmerge) -- (nonminimum.south);
\draw[arrow] (leftmerge) -- (badpoint.north);
\draw[arrow] (rescale.south) -- (trap.north);
\draw[arrow] (badpoint.south) -- ([xshift=-4.05cm]probability.north);
\draw[arrow] (trap.south) -- ([xshift=4.05cm]probability.north);
\draw[arrow] (probability.south) -- (coercive.north);
\end{tikzpicture}}
\caption{Proof of the bounded-noise counterexample. Left: $0$ is sharply repulsive but not a local minimizer. Right: a two-cycle argument keeps the rescaled iterates bounded, so the original iterates converge to $0$ almost surely. A coercive modification preserves these trajectories.}
\label{p:fig:proof-roadmap}
\end{figure}

Lemma~\ref{p:lem:normalized-trapping} thus gives convergence to the nonminimal point $0$ for $f_u$. However, $f_u$ is not coercive. To obtain a coercive counterexample with the same trajectories, we add a radial penalty that vanishes near $0$ and all trapped iterates. Specifically, set
\[
\widehat f_u(z)=f_u(z)+10\max\{0,\norm{z}-1/10\}.
\]
\begin{corollary}[The counterexample can be coercive]\label{p:cor:coercive}
The function $\widehat f_u$ is globally Lipschitz, definable, coercive, and attains a global minimum. It is not weakly convex. Under the step-size and noise assumptions of Theorem~\ref{p:thm:main}, every initial point in $\gamma_0\overline B(a,1/8)$ produces the same trapped trajectory for $\widehat f_u$ as for $f_u$. The point $0$ remains sharply repulsive and nonminimizing.
\end{corollary}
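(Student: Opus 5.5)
The plan is to treat $\widehat f_u$ as $f_u$ plus the convex, globally Lipschitz, semialgebraic radial term $g(z)=10\max\{0,\norm z-1/10\}$, and to get each assertion either from this decomposition or from the fact that $g$ vanishes identically on the open ball $B(0,1/10)$. Regularity comes directly. Lipschitz continuity and definability pass to the sum, since Lipschitz constants add and the o-minimal structure is closed under sums and compositions with $\max$ and the norm. For coercivity, $f$ is positively homogeneous with $|f(z)|\le(1+1+2)\norm z$, so $|f_u(z)|\le(4+\norm u)\norm z$. Hence $\widehat f_u(z)\ge 10\norm z-1-(4+1/200)\norm z\to\infty$. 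A continuous coercive function attains its global minimum.

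The local assertions all come from the fact that $\widehat f_u=f_u$ on $B(0,1/10)$. Subdifferentials are local, so in this ball the Fréchet, limiting and Clarke subdifferentials of $\widehat f_u$ coincide with those of $f_u$. Lemmas~\ref{p:lem:off-origin}, \ref{p:lem:clarke-criticality} and~\ref{p:lem:sharp-repulsiveness} therefore transfer unchanged, and $0$ stays sharply repulsive with $M=\{0\}$. The ray $f_u(-t,-t)<f_u(0)$ for small $t$ lies in the ball, so $0$ is still not a local minimizer. Failure of weak convexity is a local property: the paper shows it for $f_u$ via Lemma~\ref{p:lem:function-class}, and since $f_u$ is positively homogeneous that failure can be localized into any neighborhood of $0$. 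So it persists for $\widehat f_u$.

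For the trajectories, Lemma~\ref{p:lem:normalized-trapping} gives $\norm{x_n}\le 23/[20(n+100)]\le 23/2000<1/10$ for every $n$, every admissible noise, and every initial point in $\gamma_0\overline B(a,1/8)$. Each iterate therefore sits in the open ball where $\pc\widehat f_u=\pc f_u$. By the same lemma this Clarke subdifferential is a singleton along the trajectory. I would argue by induction on $n$: if the iterates agree up to time $n$, then $x_n$ lies in the ball, both subgradients equal the unique element of $\pc f_u(x_n)$, and the updates agree at $n+1$. Thus the trajectory is literally the same for every adapted selection, and the conclusion of Theorem~\ref{p:thm:main} carries over.

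The main obstacle is the non-weak-convexity claim, because it needs a localized witness rather than a global one. My first attempt would be a direct check near $0$. At points $(s,s)$ with $s>0$ small, the term $-\min\{x,y\}$ produces a concave kink, and the other terms are smooth there, so $\widehat f_u+\frac\rho2\norm\cdot^2$ cannot be convex for any $\rho$. The concave kink of $-\min\{x,y\}$ along the diagonal $x=y$ survives every quadratic regularization.
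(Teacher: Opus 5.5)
Your route for everything except weak convexity is the paper's. The radial term vanishes on the open ball $B(0,1/10)$, so all three subdifferentials agree there. The bound $\norm{x_n}\le 23/2000<1/10$ plus induction on $n$ identifies the two trajectories for every adapted selection, and the local facts at $0$ transfer. One small slip in the coercivity estimate: termwise one only has $2\max\{0,-x-y\}\le 2\sqrt2\norm{z}$, not $2\norm{z}$. Your constant $4$ still holds, since in fact $-2\sqrt2\norm{z}\le f(z)\le 2\norm{z}$, and any constant below $10-\norm{u}$ would do. The paper uses only the one-sided bound $f_u(z)\ge-(2\sqrt2+\norm{u})\norm{z}$.

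The genuine error is the explicit witness you give for the failure of weak convexity. The kink of $-\min\{x,y\}$ along $x=y$ is convex, not concave. Indeed $-\min\{x,y\}=\max\{-x,-y\}$ is a maximum of linear functions; along $(s+t,s-t)$ it equals $-s+|t|$, a V shape. Near $(s,s)$ with $s>0$ one also has $x+y>0$, so the last term vanishes identically there. Hence $\widehat f_u=\norm{z}+\max\{-x,-y\}-\inner{u}{z}$ is convex on a whole neighbourhood of such points, and no obstruction to weak convexity exists there.

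The only concave kink of $f$ comes from $-2\max\{0,-x-y\}$ across the anti-diagonal $x+y=0$. Your other remark, localizing the paper's global witness by positive homogeneity, is the correct fix and is exactly what the paper does. Take the segment $t\mapsto(r_0+t,-r_0+t)$, which lies in $B(0,1/10)$ for small $r_0>0$ and $|t|$. Homogeneity gives $f_u(r_0+t,-r_0+t)=r_0 f_u(1+t/r_0,-1+t/r_0)$. So the one-sided derivatives at $t=0$ are still $3$ and $-1$, each shifted by the tilt's common derivative $-u_1-u_2$. Adding $\frac{\rho}{2}\norm{\cdot}^2$ adds a $C^1$ function along the segment and cannot remove this downward jump. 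Replace your diagonal check with this anti-diagonal one and the proof is complete.
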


The complete proof is in Appendix~\ref{p:sec:coercive}. The radial term vanishes on a neighborhood of every trapped iterate, so the local geometry and trajectories coincide. The full arguments also record the extensions to power steps in $(1/2,1]$.

\Needspace{14\baselineskip}
\section{Gaussian noise avoidance}\label{sec:gaussian}\label{g:sec:mainargument}
In this section, we show that simply requiring the noise to have an everywhere positive density, e.g., Gaussian noise, ensures avoidance of the sharply repulsive points considered in the open question of \citet[Remark~4]{BHS}. For locally Lipschitz definable objectives, the following theorem rules out convergence to any prescribed $x_*$ with $0\notin\pl f(x_*)$ for every adapted Clarke subgradient selection.
\begin{theorem}[Gaussian avoidance of a specified point]\label{g:thm:main}
Let $f:\R^d\to\R$ be locally Lipschitz and definable, and fix
$x_*\in\R^d$ with $0\notin\pl f(x_*)$. In \eqref{g:eq:algorithm}, let
$X_0$ be finite almost surely and let $X_n,V_n$ be adapted to a
filtration $\mathcal F_n$. Suppose $\eta_{n+1}$ is independent of
$\mathcal F_n$ and the noise vectors are identically distributed as
$N(0,\Sigma)$ for a fixed positive-definite matrix $\Sigma$. If
\begin{equation}\label{g:eq:steps}
 \gamma_n=\kappa(n+n_0)^{-\alpha},
 \qquad \kappa,n_0>0,\qquad \tfrac12<\alpha\le1,
\end{equation}
then $\PP(X_n\to x_*)=0$ for every adapted choice
$V_n\in\pc f(X_n)$.
\end{theorem}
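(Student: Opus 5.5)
We argue by contradiction along the route sketched in the abstract. Suppose $\PP(X_n\to x_*)=p>0$.

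\textbf{Step 1: a local lower bound on Fr\'echet subgradients.} Since $0\notin\pl f(x_*)$ and $\pl f$ is outer semicontinuous with closed values, there are $r,c>0$ such that $\norm{v}\ge c$ for every $v\in\pf f(x)$ with $\norm{x-x_*}\le r$. Otherwise Fr\'echet subgradients tending to $0$ at points tending to $x_*$ would put $0\in\pl f(x_*)$.

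\textbf{Step 2: difference quotient functions and their limits.} Rescale around $x_*$ at the step-size scale. On the event of convergence, set
\[
g_n(w)=\frac{f(x_*+\gamma_n w)-f(x_*)}{\gamma_n}
\quad\text{and}\quad
W_n=\frac{X_n-x_*}{\gamma_n}.
\]
The functions $g_n$ are uniformly Lipschitz on compacts, so along subsequences they converge locally uniformly to Lipschitz, positively homogeneous functions $g$. Definability lets us take these as the limits of a definable family in the parameter $\gamma\to0$. I would show that every such limit satisfies $\norm{v}\ge c$ for all $v\in\pf g(w)$ and all $w\in\R^d$. This is the ``transfer to a global bound.'' It uses that a Fr\'echet subgradient of the limit is approximated by Fr\'echet subgradients of $g_n$, which are Fr\'echet subgradients of $f$ at points within distance $\gamma_n\norm{w}\to0$ of $x_*$. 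By Step 1 the bound holds there. Consequently, by a definable descent or subgradient-flow argument, $g$ decreases at rate at least $c$ along suitable directions almost everywhere. In rescaled coordinates the recursion reads
\[
W_{n+1}\approx W_n-v_n+\eta_{n+1},
\]
with the drift corrections $W_n(\gamma_n/\gamma_{n+1}-1)=O(\gamma_n^{1/\alpha-1})\cdot W_n\to0$ for $\alpha\in(1/2,1]$ handled as perturbations. Here $v_n\in\pc g_n(W_n)$.

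\textbf{Step 3: a stationary distribution with zero mean descent.} On the convergence event, $f(X_n)\to f(x_*)$. I would use the energy $F_n=g_n(W_n)$ and the telescoping identity
\[
\sum_{k<n}\gamma_k\bigl(\text{descent}_k\bigr)
=f(X_0)-f(X_n)+\text{martingale}+O\!\left(\sum\gamma_k^2\right)
\]
to show that the Ces\`aro averages of the conditional one-step descent $\E[g_n(W_n)-g_n(W_{n+1})\mid\mathcal F_n]$ tend to $0$ on a set of positive probability. The martingale converges because $\sum\gamma_k^2<\infty$; this is where $\alpha>1/2$ enters. Tightness of $(W_n)$ is needed and is the delicate point. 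I would obtain it from the Gaussian excitation together with a Lyapunov bound using the linear growth of $g$. Then Krylov--Bogoliubov averages of the laws of $(W_n,g_n)$ on the convergence event yield a limiting pair: a function $g$ as in Step 2 and a distribution $\mu$ for $W$ that is stationary for the kernel $w\mapsto w-v(w)+\eta$ with $\eta\sim N(0,\Sigma)$. Under $\mu$ this pair has expected descent $\int\E[g(w)-g(w-v+\eta)]\,d\mu=0$. Stationarity requires measurably selected $v\in\pc g$, which is possible by closedness of the Clarke graph under limits.

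\textbf{Step 4: contradiction via positive density.} Under stationarity, $\E_\mu[g(W')]=\E_\mu[g(W)]$ identically, where $W'$ is the next state. Because $\eta$ has an everywhere positive density, $W'$ has an absolutely continuous law, so $g$ is differentiable at $W'$ almost surely. By the global bound of Step 2, $\norm{\nabla g(W')}\ge c$ almost surely. The next update then has a strictly positive first-order descent $\E\ip{\nabla g}{v}=\E\norm{\nabla g}^2\ge c^2$ up to second-order terms. I would control these terms by combining homogeneity, so that the descent scales linearly while the noise contributes a bounded amount, with the Gaussian smoothing $g*N(0,\Sigma)$, whose Hessian is bounded by $O(\mathrm{Lip}(g)/\sqrt{\lambda_{\min}(\Sigma)})$. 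The aim is a uniform positive lower bound on the mean descent, contradicting the zero mean descent of Step 3.

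\textbf{Main obstacle.} I expect Step 3 (tightness of $W_n$) together with the quantitative positivity in Step 4 to be hardest. My first attempt would be to work with the smoothed functions $\tilde g_n=g_n*N(0,\Sigma)$ throughout. These are $C^\infty$ with uniformly bounded Hessians and inherit $\norm{\nabla\tilde g}\gtrsim c$ from the Fr\'echet bound via definability, which makes both the descent identity and the escape estimate routine.
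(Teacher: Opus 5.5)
Your proposal has a genuine gap in Step 3, and it is the whole difficulty rather than a technical detail. Convergence $X_n\to x_*$ carries no rate, so nothing bounds $W_n=(X_n-x_*)/\gamma_n$. The Lyapunov mechanism you suggest also runs the wrong way. Your own Step 2 bound forces $\min_{\norm{w}=r}g(w)\le -cr/2$ (Lemma~\ref{g:lem:sphere}), and the unit-step chain driven by such a $g$ escapes to infinity almost surely (Lemma~\ref{g:lem:fixed}). So the limiting dynamics pushes $W_n$ outward, the opposite of tightness; if you had tightness, the resulting stationary law would already contradict that lemma. When $W_n$ is large, the relevant object is $g_n(W_n+\cdot)-g_n(W_n)$, and locally uniform convergence $g_n\to g$ says nothing about it. The missing idea is to record the quotient centered at the \emph{current} iterate, $H_{X_n,s}(u)=[h(X_n+su)-h(X_n)]/s$, where $h$ is a global Lipschitz extension. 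These quotients are normalized and $L$-Lipschitz, so they lie in a compact space by Arzel\`a--Ascoli wherever $X_n/s$ is, and no tightness is needed. The price is twofold. The limit class $\mathcal C$ contains blow-ups at all centers $x_j\to x_*$, not only $f'(x_*;\cdot)$, so the Fr\'echet bound must be proved for all of them (Lemma~\ref{g:lem:profilegap}). The vector $v$ must also be carried as a separate coordinate, because Clarke subgradients do not pass to locally uniform limits of functions; for example, $\max\{0,s-\norm{w}\}\to0$ while its gradients near $0$ have norm $1$, so your appeal to closedness of the Clarke graph fails. In the paper, stationarity then restores $v=\nabla H(0)$. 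Separately, you cannot average ``on the convergence event'', since it is not adapted and conditioning on it breaks the independence of future noise. The paper uses $\mathcal F_{n_j}$-measurable events on which staying is conditionally likely, with windows stopped at exit (Lemma~\ref{g:lem:window}).

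Step 4 fails as designed. For a fixed function $g$, every stationary $W$ satisfies $\E[g(W)-g(W')]=0$ automatically. The increment is bounded by $L(L+\norm{\eta})$, so truncation handles non-integrable $g(W)$. Hence the zero descent of Step 3 carries no information, and the only possible contradiction is that no stationary law exists. A local estimate cannot prove this. With unit steps the remainder is of the same order as $\norm{\nabla g}^2$, and Gaussian smoothing gives only Hessian bounds of order $L/\sqrt{\lambda_{\min}(\Sigma)}$ against steps of order $L+\norm{\eta}$. The paper's counterexample makes this concrete. Its $f$ is homogeneous with Fr\'echet subgradient norms at least $\delta$ everywhere. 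Yet the estimates in the proof of Lemma~\ref{p:lem:normalized-trapping}, taken with $q_n=1$, show that the chain $w\mapsto w-\nabla f(w)+\eta$ with noise uniform on a small ball leaves a neighborhood of the two-cycle $a\leftrightarrow b$ invariant. That chain therefore has a stationary law with zero descent, even though $f(a)=f(b)$ and $\norm{\nabla f}^2=2$ on the cycle. So the obstruction must be global. It is the fixed-step escape lemma, which uses the recession function and conservative limit field at infinity, outward Euler phases with Doob control of the accumulated noise, and Gaussian full support to reach an escape region from every bounded set. In the paper's recentered framework, stationary laws can exist: a linear $H$ is a fixed state of the kernel, which is why zero descent is needed there. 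The positive descent $\norm{c}^2$ then comes from the translation-orbit decomposition (Lemma~\ref{g:lem:split}). Invariant directions carry a linear part that the state does not see, and escape applied to the normal part $K$ forces $\norm{c}\ge a$.
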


Under hypothetical convergence, summing decreases of a Lipschitz extension $h$ over late stopped windows yields vanishing mean descent for the distributions of actual difference quotient states. Compactness and continuity give a limit with mean descent $0$, and a separate state-test identity proves its stationarity. The inherited subgradient bound and Gaussian escape force positive mean descent under this same distribution, a contradiction (Figure~\ref{g:fig:route}).
\subsection{Difference quotient functions}\label{g:sec:rescaled}
We analyze objective decreases at the scale of each update. Translate the target to $0$ and subtract $f(x_*)$, so $f(0)=0$. Since Gaussian updates can leave the local neighborhood, first extend $f$ globally: for sufficiently small $R>0$ and a Lipschitz constant $L$ on $\overline B(0,R)$, set
\begin{equation}\label{g:eq:extension}
 h(x)=\inf_{y\in\overline B(0,R)}\{f(y)+L\norm{x-y}\}.
\end{equation}
Then $h=f$ on $\overline B(0,R)$ and $h$ is globally $L$-Lipschitz (Lemma~\ref{g:fact:extension}). At center $x$ and scale $s>0$, define
\begin{equation}\label{g:eq:profile}
 H_{x,s}(u)=\frac{h(x+su)-h(x)}s,\qquad u\in\R^d.
\end{equation}
Here $u$ measures displacement in units of $s$. An update $x^+=x+s(-v+\eta)$ \mbox{therefore has scaled decrease}
\[
 \frac{h(x)-h(x^+)}s=H_{x,s}(0)-H_{x,s}(-v+\eta).
\]
The uniform Lipschitz bound and $H_{x,s}(0)=0$ give locally uniform subsequential limits. Let $\mathcal C$ consist of all limits obtained as $x_j\to0$ and $s_j\downarrow0$:
\begin{equation}\label{g:eq:profilelimit}
 \sup_{\norm{u}\le M}|H_{x_j,s_j}(u)-H(u)|\longrightarrow0
 \qquad\text{for every }M>0.
\end{equation}
The extension $h$ remains fixed; the limit $H$ may depend on the centers and scales. Lemma~\ref{g:lem:profilecompactness} proves compactness of $\mathcal C$.
\begin{figure}[!t]
\centering
\begingroup
\definecolor{Ink}{HTML}{21364B}
\definecolor{Blue}{HTML}{2C6085}
\definecolor{BlueFill}{HTML}{EDF4FA}
\definecolor{Green}{HTML}{2F7263}
\definecolor{GreenFill}{HTML}{EEF7F3}
\definecolor{Amber}{HTML}{9B510B}
\definecolor{Muted}{HTML}{5B6877}
\definecolor{Line}{HTML}{B5C1CE}
\newcommand{\RMhead}[1]{{\bfseries\color{Ink}#1}\\[3pt]}
\newcommand{\RMnoise}[1]{\\[4pt]{\color{Amber}\fontsize{8}{9.5}\selectfont #1}}
\resizebox{.92\textwidth}{!}{%
\begin{tikzpicture}[
 font=\sffamily\fontsize{9}{11}\selectfont,
 box/.style={draw=Line,rounded corners=3pt,line width=.65pt,
   fill=white,text=Ink,align=center,text width=4.12cm,
   minimum height=1.48cm,inner sep=4pt},
 geometry/.style={box,draw=Blue!55,fill=BlueFill},
 convergence/.style={box,draw=Green!55,fill=GreenFill,text width=4.5cm},
 centerbox/.style={box,fill=black!2,text width=5.5cm,
   font=\sffamily\fontsize{8.5}{10.5}\selectfont},
 seed/.style={box,minimum height=.95cm},
 result/.style={minimum height=1.75cm,line width=1pt},
 arrow/.style={-{Latex[length=2mm,width=1.35mm]},draw=Muted,line width=.8pt}
]
\node[centerbox,text width=14.8cm,minimum height=1.15cm] (notation) at (6,1.7)
 {\RMhead{Difference quotient states $(H,v)$}
 $\displaystyle H_{x,s}(u)=\frac{h(x+su)-h(x)}s$
 \qquad
 $d(H,v)=\E_\eta[H(0)-H(-v+\eta)]$};

\node[seed,geometry] (target) at (0,0)
 {\RMhead{Target condition}$0\notin\pl f(0)$};
\node[seed,convergence] (assume) at (12,0)
 {\RMhead{Assume convergence}$\PP(X_n\to0)>0$};
\node[centerbox,minimum height=1.15cm] (shared) at (6,0)
 {\RMhead{Shared function setup: Lemma~\ref{g:lem:extension}}
 $h=f$ on $B(0,R)$\\
 $h$ is globally $L$-Lipschitz};

\node[convergence,below=.30cm of assume] (window)
 {\RMhead{Lemma~\ref{g:lem:window}}
 Select tail windows $[n_j,n_j+m_j]$\\
 within a ball of radius $r_j\downarrow0$\\
 with high probability};
\node[geometry] (gap) at (target |- window)
 {\RMhead{Lemma~\ref{g:lem:localgap}}Local subgradient bound\\$\norm{v}\ge a$ near $0$};

\node[convergence,below=.30cm of window] (measure)
 {\RMhead{Definition~\eqref{g:eq:occupation}}
 Define a measure $\nu_j$ over\\
 this window on states $(H,v)$};

\node[centerbox,minimum height=1.8cm] (local) at (shared |- measure)
 {\RMhead{Actual iterates: Lemma~\ref{g:lem:extension}}
 For $n\ge1$, almost surely on\\
 $\{X_n\in B(0,R)\}$:\\[3pt]
 $V_n=g(X_n)=\nabla h(X_n)$\\
 $X_{n+1}=X_n-\gamma_ng(X_n)+\gamma_n\eta_{n+1}$
 \RMnoise{Gaussian density; used in the window branch}};

\node[geometry] (inherit) at (target |- measure)
 {\RMhead{Lemma~\ref{g:lem:profilegap}}Same bound for every $H\in\mathcal C$\\$\norm{v}\ge a,\quad v\in\pf H(u)$};
\node[convergence,below=.30cm of measure] (averages)
 {\RMhead{Lemmas~\ref{g:lem:occupation-descent} and~\ref{g:lem:occupation}}
 Under this measure,\\
 expected descent tends to $0$;\\[3pt]
 the expected change of every\\
 continuous test $\psi$ tends to $0$};
\node[geometry,draw=Amber,line width=.9pt] (escape) at (target |- averages)
 {\RMhead{Lemma~\ref{g:lem:fixed}}Iterates with step size $1$\\escape to infinity\\almost surely};
\node[box,draw=Amber,fill=orange!6,text width=3cm,minimum height=.6cm,
 text=Amber,font=\sffamily\bfseries\fontsize{9}{11}\selectfont]
 (gaussian) at (shared |- averages) {Gaussian noise};

\node[convergence,result,below=.30cm of averages] (limit)
 {\RMhead{Lemma~\ref{g:prop:zero}}
 Take limits as $j\to\infty$:\\[3pt]
 $\displaystyle\nu_{j}\Rightarrow\mu\quad\text{(stationary)}$\\[4pt]
 $\displaystyle\int_E d(H,v)\,d \mu=0$};
\node[geometry,result] (positive) at (target |- limit)
 {\RMhead{Lemma~\ref{g:prop:negative}}For every stationary $\mu$\\on limiting states:\\[4pt]
  $\displaystyle\int d(H,v)\,d\mu\ge a^2>0$\RMnoise{Centered noise}};

\node[box,fill=Ink!4,draw=Ink,text width=15.0cm,minimum height=1.3cm,
 font=\sffamily\fontsize{10}{12}\selectfont,
 below=.55cm of limit,xshift=-6cm] (conclusion)
 {\RMhead{Theorem~\ref{g:thm:main}: avoidance}
  $\displaystyle 0=\int d(H,v)\,d\mu\ge a^2>0
   \qquad\Longrightarrow\qquad \PP(X_n\to0)=0$};

\draw[arrow] (shared.south)--(local.north);
\draw[arrow] (shared.south west)--(2.8,-1.25)--(2.8,0 |- inherit)--(inherit.east);
\draw[arrow] (local.east)--(measure.west);
\draw[arrow] (target)--(gap);
\draw[arrow] (gap)--(inherit);
\draw[arrow] (assume)--(window);
\draw[arrow] (window)--(measure);
\draw[arrow] (measure)--(averages);
\draw[arrow] (averages)--(limit);
\draw[arrow] (inherit.west)--++(-.5,0)|-(positive.west);
\draw[arrow,draw=Amber] (gaussian.west)--(escape.east);
\draw[arrow] (escape)--(positive);
\draw[arrow] (positive.south)--++(0,-.35)-|([xshift=-3cm]conclusion.north);
\draw[arrow] (limit.south)--++(0,-.35)-|([xshift=3cm]conclusion.north);
\end{tikzpicture}
}
\endgroup
\caption{Proof of Gaussian noise avoidance via difference quotient functions,
which capture objective changes at a different scale.
Left: $0\notin\pl f(0)$ yields a subgradient bound inherited by all limiting
functions; Gaussian escape then forces positive mean descent under every
stationary distribution.
Right: hypothetical convergence yields such a distribution with zero mean
descent, a contradiction.}
\label{g:fig:route}
\end{figure}

To pass to limits of the recorded pairs $(H_{X_n,s},V_n)$, we use states $(H,v)\in\mathcal C\times\overline B(0,L)$. We record $v$ alongside $H$ because locally uniform convergence need not preserve gradients at $0$. To express the function from a new center, define
\begin{equation}\label{g:eq:translation}
 (T_zH)(u)=H(u+z)-H(z).
\end{equation}
For actual quotients, $T_zH_{x,s}=H_{x+sz,s}$: this changes the center while keeping the scale fixed. Given $(H,v)$, draw $\eta\sim N(0,\Sigma)$ and put $\Delta=-v+\eta$. The updated state is $(T_\Delta H,\nabla H(\Delta))$. Since $H$ is Lipschitz and $\Delta$ has a density, Rademacher's theorem \cite[Theorem~9.60]{RW} implies that the gradient exists with probability $1$; set it to $0$ at exceptional points. After changing the center to $z$, $T_zH$ still belongs to $\mathcal C$. The updated distribution defines the transition kernel $P((H,v),\cdot)$. The expected descent is
\begin{equation}\label{g:eq:descentdef}
 d(H,v)=\E_{\eta\sim N(0,\Sigma)}[H(0)-H(-v+\eta)].
\end{equation}

For a probability measure $\mu$ on this state space, sample $(H,V)\sim\mu$ independently of $\eta\sim N(0,\Sigma)$, and set $\Delta=-V+\eta$. The updated distribution is $\mu P$, where for every Borel set $A$,
\[
 (\mu P)(A)=\int P((H,v),A)\,d\mu(H,v).
\]
Stationarity means $\mu P=\mu$: one update preserves the joint distribution of the function and vector. The mean descent is
\begin{equation}\label{g:eq:averaged-descent}
 \int d(H,v)\,d\mu(H,v)
 =\E_{\substack{(H,V)\sim\mu\\\eta\sim N(0,\Sigma)}}[H(0)-H(\Delta)].
\end{equation}

\subsection{Positive expected descent}
The condition $0\notin\pl f(0)$ gives a local Fr\'echet subgradient bound that becomes global for limiting difference quotient functions. Combined with Gaussian escape, it yields strictly positive stationary mean descent. For some $a>0$ and sufficiently small $R>0$,
\begin{equation}\label{g:eq:intro-gap}
 \norm{w}\ge a\qquad(x\in\overline B(0,R),\ w\in\pf f(x)).
\end{equation}
\begin{lemma}[Difference quotients preserve the same lower bound]\label{g:lem:profilegap}
For every difference quotient $H_{x,s}$,
\begin{equation}\label{g:eq:exact-subgradient-scaling}
 \pf H_{x,s}(u)=\pf h(x+su).
\end{equation}
In particular, whenever $x+su\in B(0,R)$, $\norm{v}\ge a$ for every $v\in\pf H_{x,s}(u)$, with the same $a$ as in~\eqref{g:eq:intro-gap}.
Every limiting difference quotient function $H\in\mathcal C$ satisfies
\begin{equation}\label{g:eq:profilegap}
 \norm{v}\ge a\qquad(u\in\R^d,\ v\in\pf H(u)).
\end{equation}
\end{lemma}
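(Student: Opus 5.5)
The proof has three steps: the exact scaling identity \eqref{g:eq:exact-subgradient-scaling} straight from the definition of the Fr\'echet subdifferential, the local bound through $h=f$ near $0$, and the passage to limits $H\in\mathcal C$ by a perturbed-minimization argument.

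\textbf{Step 1: the scaling identity.} Fix $x$, $s>0$, $u$, and write $y=x+su$. For $v\in\R^d$ and $k\in\R^d$,
\[
H_{x,s}(u+k)-H_{x,s}(u)-\ip{v}{k}=\frac{h(y+sk)-h(y)-\ip{v}{sk}}{s}.
\]
Since $s$ is fixed, a remainder $o(\norm{sk})$ divided by $s$ is $o(\norm{k})$, and the converse also holds. Hence $v\in\pf H_{x,s}(u)$ if and only if $v\in\pf h(y)$. The additive constant $-h(x)/s$ plays no role.

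\textbf{Step 2: the local bound for $H_{x,s}$.} Take $y=x+su\in B(0,R)$. Then $h=f$ on a neighbourhood of $y$, by Lemma~\ref{g:fact:extension} and openness of $B(0,R)$. The Fr\'echet subdifferential is local, so $\pf h(y)=\pf f(y)$. Step 1 and \eqref{g:eq:intro-gap} then give $\norm{v}\ge a$ for every $v\in\pf H_{x,s}(u)$.

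\textbf{Step 3: the bound for limits $H\in\mathcal C$.} Let $H_j:=H_{x_j,s_j}\to H$ locally uniformly, with $x_j\to0$ and $s_j\downarrow0$. Fix $u\in\R^d$, $v\in\pf H(u)$, and $\delta>0$.
\begin{itemize}
\item By the definition of $v\in\pf H(u)$, there is $\rho>0$ such that
\[
\phi(w):=H(w)-\ip{v}{w-u}+\delta\norm{w-u}
\]
satisfies $\phi(w)\ge\phi(u)+\tfrac{\delta}{2}\norm{w-u}$ on $\overline B(u,\rho)$. So $u$ is a strict minimizer of $\phi$ there, with a linear growth margin.
\item Let $\phi_j$ be $\phi$ with $H$ replaced by $H_j$, and let $u_j$ minimize $\phi_j$ over $\overline B(u,\rho)$. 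Uniform convergence on this ball and the growth margin give $\norm{u_j-u}\le 4\sup_{\overline B(u,\rho)}|H_j-H|/\delta\to0$. Hence $u_j$ is interior for large $j$, and it is a local minimizer of $\phi_j$.
\item So $0\in\pf\phi_j(u_j)$. Apply the finite-dimensional fuzzy sum rule for Fr\'echet subdifferentials \citep[Chapter~10]{RW} to the locally Lipschitz $H_j$ plus the convex, $(\norm{v}+\delta)$-Lipschitz term $w\mapsto-\ip{v}{w-u}+\delta\norm{w-u}$. This yields a point $u_j'$ with $\norm{u_j'-u_j}\le 1/j$ and some $w_j\in\pf H_j(u_j')$ with $\norm{w_j-v}\le 2\delta$. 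The extra $\delta$ absorbs the fuzziness.
\end{itemize}
The points $u_j'$ stay bounded, so the physical points $x_j+s_ju_j'$ tend to $0$ and lie in $B(0,R)$ for large $j$. Steps 1 and 2 give $\norm{w_j}\ge a$. Therefore $\norm{v}\ge a-2\delta$, and letting $\delta\downarrow0$ proves \eqref{g:eq:profilegap}.

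\textbf{Main obstacle.} Fr\'echet subgradients are not stable under uniform convergence, so the delicate point is Step 3. The key device is the linear penalty $\delta\norm{w-u}$: a quadratic penalty would not turn the Fr\'echet inequality into a strict minimum with usable margin. This device is what transfers a Fr\'echet subgradient of the limit to nearby Fr\'echet subgradients of the approximants. The only outside tool is the fuzzy sum rule. One could avoid it by smoothing the norm term, e.g.\ replacing $\norm{w-u}$ by $\sqrt{\norm{w-u}^2+\epsilon^2}$ and then using the exact sum rule with a smooth summand.
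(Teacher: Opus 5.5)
Your proposal is correct and follows the paper's proof. It has the same three steps: the scaling identity via the substitution $y=x+sw$, the local bound from $h=f$ on $B(0,R)$, and the passage to $H\in\mathcal C$ by minimizing a perturbation of $H_j$ on a small ball around $u$, with the minimizers localized by uniform convergence.

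The only difference is the perturbation. The paper uses a $C^1$ strict lower support $\phi$ with $\nabla\phi(u)=v$ (Lemma~\ref{g:fact:support}), so the minimizers $u_j$ of $H_j-\phi$ give the exact subgradients $\nabla\phi(u_j)\in\pf H_j(u_j)$. That route needs neither your $\delta$-slack nor a fuzzy sum rule. So your linear penalty is a workable device rather than the indispensable one your closing remark suggests.
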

Appendix~\ref{g:sec:profiles} gives the complete proof.

The translation class $\{T_zH:z\in\R^d\}$ groups functions differing only by a change of center. The following decomposition separates invariant directions from the remaining motion.
\begin{lemma}[Linear and normal parts of a difference quotient function]\label{g:lem:split}
For $H\in\mathcal C$ set
\begin{equation}\label{g:eq:stabilizer}
 S=\{z:T_zH=H\},\qquad N=S^\perp.
\end{equation}
Then $S$ is a linear subspace. There are $c\in S$ and a definable
$L$-Lipschitz function \mbox{$K:N\to\R$ such that}
\begin{equation}\label{g:eq:split}
 H(p+q)=\ip{c}{p}+K(q),\qquad p\in S,\ q\in N.
\end{equation}
Moreover,
\begin{equation}\label{g:eq:subsplit}
 \pf H(p+q)=\{c+w:w\in\pf^{N}K(q)\},
\end{equation}
Here $P_S$ and $P_N$ denote the orthogonal projections onto $S$ and
$N$, and $\pf^N$ uses increments within $N$. The map
$z\mapsto T_zH$ is a continuous bijection from $N$ onto the orbit of
$H$, with a Borel inverse.
\end{lemma}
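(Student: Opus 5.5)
We prove Lemma~\ref{g:lem:split} in four steps: show that $S$ is a subspace, extract the linear part $c$, define $K$ by restriction to $N$, and then handle the subdifferential formula and the orbit map.

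\textbf{Step 1: $S$ is a closed subspace.} First, $T_0H=H$. Next, a direct computation gives the cocycle identity $T_{z}T_{w}H=T_{z+w}H$:
\[
(T_z(T_wH))(u)=T_wH(u+z)-T_wH(z)=H(u+z+w)-H(z+w).
\]
Hence if $z,w\in S$, then $T_{z+w}H=T_zT_wH=T_zH=H$. Also $T_{-z}H=T_{-z}T_zH=T_0H=H$. So $S$ is an additive subgroup. It is closed, because $z\mapsto T_zH$ is continuous in the topology of locally uniform convergence ($H$ is $L$-Lipschitz).

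For $z\in S$, evaluating $T_zH=H$ at $u=0$ gives nothing new, since $H(0)=0$. Evaluating at $u$ gives $H(u+z)=H(u)+H(z)$ for all $u$. Taking $u\in S$ shows that $\phi(z):=H(z)$ is additive on $S$. Since $\phi$ is also Lipschitz, it is continuous, and a continuous additive function on a closed subgroup is $\R$-linear along lines contained in $S$. To conclude that $S$ is a genuine subspace (not a lattice), we use definability: $H$ is a locally uniform limit of definable functions, so its definability is not automatic. We therefore intend to invoke the argument used for compactness of $\mathcal C$ (Lemma~\ref{g:lem:profilecompactness}), which presumably shows that limits of $H_{x_j,s_j}$ are definable in the same o-minimal structure, as blow-ups at a point along definable curves. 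With $H$ definable, $S=\{z:\forall u,\ H(u+z)-H(z)=H(u)\}$ is a definable subgroup of $\R^d$. A closed definable subgroup has finitely many connected components, and the only such closed subgroups of $\R^d$ are subspaces. We expect this step to be the main obstacle: it requires the definability of limits in $\mathcal C$. If that is unavailable, the fallback is to show that $S\cap B(0,\epsilon)$ contains a segment, using the convergence $H_{x_j,s_j}\to H$ and the local structure of $h$ near $0$.

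\textbf{Step 2: the linear part and $K$.} Linearity of $\phi$ on $S$ gives $c\in S$ with $H(p)=\ip{c}{p}$ for $p\in S$, and $|c|\le L$. Define $K=H|_N$. For $p\in S$ and $q\in N$, the additivity identity $H(u+z)=H(u)+H(z)$ with $u=q$ and $z=p$ gives
\[
H(p+q)=H(q)+\ip{c}{p},
\]
which is \eqref{g:eq:split}. $K$ is $L$-Lipschitz as a restriction of $H$, and definable as a restriction of a definable function.

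\textbf{Step 3: the subdifferential formula.} We verify \eqref{g:eq:subsplit} straight from the Fr\'echet definition, splitting increments as $h=h_S+h_N$. For the inclusion $\supseteq$, take $w\in\pf^NK(q)$. Then
\[
H(p+q+h)-H(p+q)-\ip{c+w}{h}=K(q+h_N)-K(q)-\ip{w}{h_N}\ge o(\norm{h_N}),
\]
and $o(\norm{h_N})=o(\norm h)$. For the inclusion $\subseteq$, take $v\in\pf H(p+q)$. Using increments in $\pm S$, where $H$ is affine, forces $P_Sv=c$. Using increments in $N$ then shows $P_Nv\in\pf^NK(q)$.

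\textbf{Step 4: the orbit map.} Continuity of $z\mapsto T_zH$ on $N$ was noted in Step 1. For injectivity, if $T_zH=T_{z'}H$ then applying $T_{-z'}$ gives $T_{z-z'}H=H$, so $z-z'\in S\cap N=\{0\}$. For the Borel inverse, restrict to compact pieces $N\cap\overline B(0,k)$, where the continuous bijection is a homeomorphism onto its image. The images are compact, and the inverse is Borel on their countable union, which is the orbit. (A direct route is also available: $z$ can be recovered from $T_zH$ via $T_zH(-z)=-H(z)$ together with uniqueness.)
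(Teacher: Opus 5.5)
Your Steps 2--4 follow the paper's proof closely. You use the identity $H(u+p)=H(u)+H(p)$ for $p\in S$, take $K=H|_N$, use increments along $\pm S$ to force $P_Sv=c$, and get injectivity from $S\cap N=\{0\}$. Your two variations are legitimate. Exhausting $N$ by the compact balls $N\cap\overline B(0,k)$ gives the Borel inverse more elementarily than the Lusin--Souslin theorem the paper cites. The argument ``a definable closed subgroup has finitely many components, hence is $V\oplus\Lambda$ with $\Lambda=\{0\}$'' is an equivalent substitute for the paper's one-variable argument: the definable set $\{t:tz\in S\}\supseteq\mathbb Z$ is infinite, so it contains an interval $I$, and $\R=\bigcup_m m(I-I)$. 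You should also record surjectivity onto the orbit, $T_{p+q}H=T_qT_pH=T_qH$ for $p\in S$, $q\in N$; you only prove injectivity.

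The step that needs fixing is the source of definability of $H$. Lemma~\ref{g:lem:profilecompactness} is pure Arzel\`a--Ascoli and gives no definability. ``Blow-ups along definable curves'' is not an argument either: members of $\mathcal C$ are limits along arbitrary sequences $(x_j,s_j)\to(0,0)$, and curve selection is unavailable in the function space $\mathcal D$, which is not a definable set. The paper obtains definability from a separate, genuinely nontrivial ingredient, Lemma~\ref{g:fact:codes}. There, each $H_{x,s}$ is encoded by the compact closure of its compactified graph with $(x,s)$ attached. The Lion--Speissegger compactification then yields a compact definable parameter set $P_0$ with $\mathcal C=\{H_p:p\in P_0\}$ and definable evaluation $(p,u)\mapsto H_p(u)$. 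Only after that is $S=\{z:\forall u,\ H(u+z)-H(z)=H(u)\}$ definable.

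Your fallback cannot bypass this input. For a general Lipschitz function, e.g.\ $H(u)=u+\sin(2\pi u)/(2\pi)$ on $\R$, one has $S=\mathbb Z$, so Lipschitz bounds and convergence alone give no segment in $S$. The asserted definability of $K$ also needs the same fact. With Lemma~\ref{g:fact:codes} cited in place of Lemma~\ref{g:lem:profilecompactness}, your argument is complete.
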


Within a translation class, the normal position follows $Z^+=Z-\nabla_NK(Z)+P_N\eta$ on $N$. Appendix~\ref{g:sec:orbits} proves the decomposition and transfers stationarity to these normal positions. The next lemma excludes a stationary distribution under a global positive subgradient bound.
\begin{lemma}[Fixed-step escape from a global subgradient bound]\label{g:lem:fixed}
Let $H:\R^d\to\R$ be definable and globally $L$-Lipschitz, where
$d\ge1$.  Suppose that, for some $a>0$,
\begin{equation}\label{g:fixed:gap}
 \norm{v}\ge a
 \qquad\text{for every }x\in\R^d\text{ and }v\in\pf H(x).
\end{equation}
Let $Y_0$ be finite almost surely and let
\begin{equation}\label{g:fixed:chain}
 Y_{n+1}=Y_n-V_n+\eta_{n+1},\qquad V_n\in\pc H(Y_n),
\end{equation}
where the states and selections are adapted to $(\mathcal F_n)$.
Assume that the noise vectors are independent and identically distributed
with law $N(0,\Sigma)$, that $\Sigma$ is positive definite, and that
$\eta_{n+1}$ is independent of $\mathcal F_n$.  Then
\begin{equation}\label{g:fixed:escape}
 \PP\bigl(\norm{Y_n}\longrightarrow\infty\bigr)=1.
\end{equation}
In particular, for any fixed measurable rule choosing a Clarke subgradient
from the current state, one update of \eqref{g:fixed:chain} changes every
probability distribution.
\end{lemma}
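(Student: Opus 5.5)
The plan is to first reduce to a chain driven by genuine gradients, then put it in the Meyn--Tweedie framework for $T$-chains, and finally rule out recurrence with a descent argument based on \eqref{g:fixed:gap}.

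\textbf{Step 1: the selection is a gradient.} Because $H$ is definable and Lipschitz, there is a closed definable set $D$ of dimension $<d$ outside which $H$ is $C^1$. At points of $\R^d\setminus D$ we have $\pc H(y)=\{\nabla H(y)\}$, and $\nabla H(y)\in\pf H(y)$, so $\norm{\nabla H(y)}\ge a$ by \eqref{g:fixed:gap}. For $n\ge0$, conditionally on $\mathcal F_n$, the point $Y_{n+1}=Y_n-V_n+\eta_{n+1}$ has a Gaussian density. Hence $\PP(Y_{n+1}\in D)=0$, and almost surely $V_n=\nabla H(Y_n)$ for all $n\ge1$. From time $1$ on, the process is therefore the time-homogeneous Markov chain with kernel
\[
P(y,\cdot)=\mathrm{Law}\bigl(y-g(y)+\eta\bigr),
\qquad \norm{g}\ge a \text{ a.e.},\qquad \norm{g}\le L .
\]
This holds for any choice of $g$ on the null set $D$, and it removes the dependence on the adapted selection.

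\textbf{Step 2: transience versus recurrence.} The kernel has an everywhere positive density. That density is bounded below on compacts, uniformly over $y$ in a compact set, because $\norm{g}\le L$. So the chain is Lebesgue-irreducible, aperiodic, and every compact set is petite. By the Meyn--Tweedie dichotomy for such $T$-chains, the chain is either Harris recurrent or transient. In the transient case every compact set is visited finitely often almost surely, which is exactly \eqref{g:fixed:escape}. It therefore suffices to exclude recurrence.

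A recurrent chain admits a $\sigma$-finite invariant measure $\pi$, unique up to scaling and equivalent to Lebesgue measure. The final claim of the lemma, that no distribution is preserved, is the special case where $\pi$ is finite. It also follows immediately once $\PP(\norm{Y_n}\to\infty)=1$ is known, since a preserved law would be tight along the orbit.

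\textbf{Step 3 (main obstacle): excluding recurrence.} My first attempt is a Lyapunov function built from $H$ itself. I would show that
\[
d(y)=\E\bigl[H(y)-H(y-g(y)+\eta)\bigr]
\]
is positive on average. Write $H(y)-H(y-g(y)+\eta)$ as an integral of $\ip{\nabla H}{g(y)-\eta}$ along the segment; almost every segment meets $D$ in a null set because $\eta$ has a density. Gaussian integration by parts (Stein's identity, $\E[\ip{\nabla H(\cdot+\eta)}{\eta}]=\E[\tr(\Sigma\nabla^2 H)]$ understood distributionally) should make the noise term average out under an invariant measure. What remains is the term $\E\ip{\nabla H(Y)}{g(Y)}=\E\norm{g(Y)}^2\ge a^2$, at least along the invariant measure, where the current point and its gradient are consistent. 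This is the same mechanism the paper later uses in Lemma~\ref{g:prop:negative}.

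In the positive recurrent case, integrating against $\pi$ gives $0=\int(H-PH)\,d\pi\ge a^2$, a contradiction. Here integrability is ensured by truncating $H$ to $\max\{H,-M\}$ and using the Lipschitz bound.

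For the null recurrent case I would use the same telescoping along excursions from a fixed compact petite set $K$. Harris recurrence forces $H(Y_n)$ to return to the bounded range $H(K)$ infinitely often. The descent estimate, combined with a martingale law of large numbers for the bounded-variance increments $H(Y_{n+1})-PH(Y_n)$, instead forces $H(Y_n)\le H(Y_0)-\tfrac{a^2}{2}n$ eventually.

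The delicate point is making the averaged descent bound hold pointwise or along excursions rather than only under an invariant law. If that fails, my fallback is to restrict to a single translation class using Lemma~\ref{g:lem:split}. The orbit decomposition reduces the problem to the normal component $Z^+=Z-\nabla_NK(Z)+P_N\eta$ plus a linear drift $-\norm{c}^2$ in $S$. When $c\ne0$, the drift in $S$ already gives escape via $\ip{c}{Y_n}\to-\infty$ by the strong law of large numbers. When $c=0$, I would run the descent argument on $K$ in the lower-dimensional space $N$.
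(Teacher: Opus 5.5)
Your Steps~1 and~2 are sound. The Gaussian density does identify $V_n$ with $\nabla H(Y_n)$ for $n\ge1$, and the $T$-chain dichotomy (transient $T$-chains are evanescent) could legitimately replace the paper's final $0$--$1$ argument. The gap is Step~3, which carries the whole content of the lemma.

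For an invariant probability $\pi$ of your own chain, stationarity already gives $\int(H-PH)\,d\pi=0$ after your truncation. A contradiction therefore needs an \emph{independent} proof that the mean descent is at least $a^2$, and the Stein computation does not provide one. Put $\Delta=-g(y)+\eta$. With a unit step, $H(y)-H(y+\Delta)=\int_0^1\ip{\nabla H(y+t\Delta)}{g(y)-\eta}\,dt$ involves gradients along the whole segment, not at $y$. Gaussian integration by parts then produces a curvature term $t\,\E\tr(\Sigma\nabla^2H)$ with no sign. Consistency of the current point with its gradient under $\pi$ says nothing about the gradients at later points of the segment.

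The hypothesis allows convex kinks, where this term dominates. Take $d=1$, $H(y)=y+2\max\{y,0\}$ (so $\pf H(0)=[1,3]$ and $a=1$), and $\Sigma=100$. As $y\uparrow0$ the expected descent tends to $1-2\E(\eta-1)^+\approx-6$. So neither the pointwise bound your null-recurrent argument needs nor the averaged bound for the positive-recurrent case holds. Your reasoning works only when $H$ satisfies a supergradient inequality at unit scale (e.g.\ $H$ concave), which the Fr\'echet gap does not give.

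Invoking the mechanism of Lemma~\ref{g:prop:negative} is circular, and it does not transfer. There the positive term $\norm{c}^2$ comes from motion along the stabilizer $S$, which the recorded state $T_ZH$ forgets, while stationarity of the normal position cancels the $K$-increment. Moreover, $\norm{c}\ge a$ is obtained \emph{from} Lemma~\ref{g:lem:fixed}. In your chain the full position is the state, so stationarity cancels everything. Your fallback handles $c\ne0$ correctly, since the $S$-component of each increment is $-c+P_S\eta$. For generic $H$, however, such as $H=-\norm{\cdot}$ or the example above, $S=\{0\}$, $c=0$ and $N=\R^d$, so the fallback returns you to the original problem.

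What is missing is an actual escape mechanism, and the paper obtains it by looking from infinity:
\begin{itemize}
\item Lemma~\ref{g:lem:sphere} uses the Fermat rule to turn the Fr\'echet gap into a drop of $ar/2$ on every sphere. This yields a homogeneous recession potential $J$ with $\min_{\norm{u}=1}J(u)\le-a/2$ and a conservative field $\mathcal V$ at infinity.
\item Along $\dot x\in-\mathcal V(x)$ one has $\dot r=-J(u)$ with $J(u)$ nonincreasing, so a cone $\{J(u)\le b\}$ with $b<0$ is outward invariant.
\item At radius $r$, a unit step is an Euler step of length $1/r$. By Doob's inequality, the rescaled accumulated noise over $O(r)$ steps exceeds $\epsilon$ with probability at most $C/r$. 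Radius-doubling phases therefore fail with total probability at most $2C/r$.
\end{itemize}
This supplies a positive-measure set from which the chain avoids a given compact set forever with probability at least $1/2$. That is exactly what you would need to rule out Harris recurrence within your Step~2 framework.
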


The Gaussian density makes an open region with positive escape probability accessible from every bounded set; repeated access yields almost-sure escape (Appendix~\ref{g:sec:fixed-tool}). If $N\ne\{0\}$ and $\norm{c}<a$, the normal potential $K$ inherits the positive subgradient bound $\sqrt{a^2-\norm{c}^2}$, contradicting stationarity by Lemma~\ref{g:lem:fixed}. Thus $\norm{c}\ge a$; for $N=\{0\}$, the inherited bound gives this directly. Stationarity cancels the expected increment of $K$, and centering of the noise leaves mean descent $\norm{c}^2$.
\begin{lemma}[Positive expected descent]\label{g:prop:negative}
Every stationary probability $\mu$ for $P$ on $\mathcal C\times\overline B(0,L)$ satisfies
\begin{equation}\label{g:eq:negative}
 \int d(H,v)\,d\mu(H,v)\ge a^2>0.
\end{equation}
\end{lemma}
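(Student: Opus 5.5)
The plan is to reduce the integral in \eqref{g:eq:negative} to a computation along a single translation class, using the decomposition of Lemma~\ref{g:lem:split}, and to show that each class contributes at least $\norm{c}^2\ge a^2$.

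\textbf{Step 1: reduce $\mu$ to normal positions on orbits.} Since $(\mu P)$ moves $H$ only within its orbit $\{T_zH\}$, I would disintegrate $\mu$ over orbits. Using the continuous bijection $z\mapsto T_zH$ from $N$ onto the orbit, with Borel inverse (Lemma~\ref{g:lem:split}), I would fix a Borel choice of base point $H_0$ in each orbit and write the states as $H=T_ZH_0$ with $Z\in N$. By stationarity, $(H,V)\sim\mu$ has the law of an updated state $(T_\Delta H',\nabla H'(\Delta))$, and $\nabla H'(\Delta)=\nabla(T_\Delta H')(0)$. Hence, $\mu$-almost surely, $V=\nabla H(0)=\nabla H_0(Z)$ exists, and by \eqref{g:eq:subsplit} it equals $c+\nabla_NK(Z)$. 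The update then reads $Z^+=Z-\nabla_NK(Z)+P_N\eta$, and $\mu$-stationarity transfers to stationarity of the law of $Z$ under this chain, conditionally on the orbit.

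\textbf{Step 2: compute the descent.} Write $\Delta=-V+\eta$. Since $P_SV=c$, \eqref{g:eq:split} gives
\[
H(0)-H(\Delta)=H_0(Z)-H_0(Z+\Delta)=\ip{c}{c-P_S\eta}+K(Z)-K(Z^+).
\]
Because the noise is centered, the linear term has mean $\norm{c}^2$. To show that $\E[K(Z)-K(Z^+)]=0$ without assuming $K(Z)$ is integrable, I would truncate: set $\psi_M=\max\{-M,\min\{M,K\}\}$. Stationarity gives $\E[\psi_M(Z)-\psi_M(Z^+)]=0$. The differences are bounded by $L\norm{\Delta}\le L(L+\norm{\eta})$, which is integrable, so dominated convergence as $M\to\infty$ yields zero mean. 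Integrating over orbits then gives $\int d\,d\mu=\int\norm{c_H}^2\,d\mu$.

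\textbf{Step 3: show $\norm{c}\ge a$ on every orbit charged by $\mu$.}
\begin{itemize}
\item If $N=\{0\}$, then $H$ is linear and $\pf H=\{c\}$, so \eqref{g:eq:profilegap} gives $\norm{c}\ge a$ directly.
\item If $N\neq\{0\}$ and $\norm{c}<a$, then \eqref{g:eq:subsplit} and \eqref{g:eq:profilegap} give $\norm{w}\ge\sqrt{a^2-\norm{c}^2}>0$ for every $w\in\pf^NK(q)$. Now apply Lemma~\ref{g:lem:fixed} on $N\cong\R^{\dim N}$. There $K$ is definable and $L$-Lipschitz, and $P_N\eta\sim N(0,P_N\Sigma P_N)$ is nondegenerate on $N$. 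The lemma says one update changes every distribution, which contradicts the stationarity of the law of $Z$ from Step 1.
\end{itemize}
So $\norm{c}\ge a$ holds $\mu$-almost everywhere, and Step 2 gives \eqref{g:eq:negative}.

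\textbf{Main obstacle.} I expect Step 1 to be the hardest part. It requires the measurable selection of orbit representatives, a disintegration compatible with $P$, and a justification that the stationary second coordinate $V$ really equals $\nabla H(0)$ almost surely, so that the normal chain is driven by a Clarke selection of $K$ as Lemma~\ref{g:lem:fixed} requires. I would handle this with the Borel inverse from Lemma~\ref{g:lem:split}, together with the fact that $\Delta$ has a density, which makes the relevant gradients exist almost surely.
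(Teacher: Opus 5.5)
Your overall architecture is the paper's: disintegrate $\mu$ over translation orbits, pass to the normal chain $Z^+=Z-\nabla_NK(Z)+P_N\eta$, cancel $\E[K(Z)-K(Z^+)]$ by truncation, and use Lemma~\ref{g:lem:fixed} (with the separate case $N=\{0\}$) to force $\norm{c}\ge a$. Steps~2 and~3 are correct as written. Obtaining $\pi(D)=1$ directly from \eqref{g:eq:stationarygradient} is fine, and any Borel Clarke selection off $D$ then puts the normal chain in the setting of Lemma~\ref{g:lem:fixed}.

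The gap is in Step~1. The Borel inverse in Lemma~\ref{g:lem:split} is a statement about one fixed $H$: it identifies a single orbit with $N$. What you need is a Borel choice of base point across all orbits simultaneously. That means a Borel map on $\mathcal C$ whose fibers are exactly the orbits, with a Borel section, followed by a disintegration of $\mu$ whose pieces are each stationary and carried by one orbit. Per-orbit Borel inverses do not give this, and the density of $\Delta$ plays no role in it. Consider the irrational flow $t\cdot(x,y)=(x+t,y+\alpha t)$ on $\mathbb T^2$. Every orbit map is a continuous injection with Borel inverse, yet Lebesgue measure is ergodic and charges no orbit, so there is no Borel transversal. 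Moreover, for the kernel moving $p$ to $t\cdot p$ with $t\sim N(0,1)$, Lebesgue measure is stationary while no stationary probability is carried by a single orbit, since on an orbit the kernel is a Gaussian random walk on $\R$. So without a further argument, the stationary orbitwise law of $Z$ on which your Steps~2 and~3 both rest need not exist.

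The missing idea is to use definability of $\mathcal C$ as a whole family, not merely of each member. In Lemma~\ref{g:fact:codes} the paper applies the Lion--Speissegger definable compactification to compactified graphs of the difference quotients that also record their centers and scales. This yields one compact definable parameter set $P_0$ and a jointly definable family $(H_p)_{p\in P_0}$ with $\mathcal C=\{H_p:p\in P_0\}$. Equality of functions and translation equivalence then become definable relations on a single parameter set. Two applications of strong definable choice (Lemma~\ref{g:lem:orbits}) then give a Borel quotient map $\rho:\mathcal C\to Q$ whose fibers are the orbits, together with Borel representatives.

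One must still check that the conditional measures are stationary (Proposition~\ref{g:prop:mixture}). Since $P$ preserves orbits, $P[(b\circ\rho)\psi]=(b\circ\rho)P\psi$ for bounded Borel $b$. Stationarity of $\mu$ then gives $\int(P\psi-\psi)\,d\mu_q=0$ for $\rho_\#\mu$-almost every $q$, and a countable uniformly dense family in $C(E)$ upgrades this to $\mu_qP=\mu_q$. With this in place your remaining steps go through, and the orbitwise identity $\int d\,d\mu_\theta=\norm{c_\theta}^2\ge a^2$ integrates to \eqref{g:eq:negative}.
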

Appendix~\ref{g:sec:energy} gives the complete argument, including the averaging over translation classes.

\subsection{Expected descent equal to \texorpdfstring{$0$}{0} under convergence}
Assuming convergence to the target with positive probability, we construct a stationary distribution with mean descent $0$. The construction uses stopped averages of actual difference quotient states and separate telescoping identities for the values of $h$ and for state tests.
\begin{lemma}[Expected descent equal to $0$ under convergence]\label{g:prop:zero}
If $\PP(X_n\to0)>0$, there is a probability measure $\mu$ on $\mathcal C\times\overline B(0,L)$ such that
\begin{equation}\label{g:eq:zero}
 \int d(H,v)\,d\mu(H,v)=0,\qquad \mu P=\mu.
\end{equation}
\end{lemma}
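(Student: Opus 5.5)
We plan to build $\mu$ as a weak limit of averaged occupation measures of actual difference quotient states along late time windows, using the forward-referenced Lemmas~\ref{g:lem:window}, \ref{g:lem:occupation-descent} and~\ref{g:lem:occupation}.

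\textbf{Windows.} Suppose $\PP(X_n\to0)=p>0$. By Lemma~\ref{g:lem:window}, choose windows $[n_j,n_j+m_j]$ with $m_j\to\infty$ and radii $r_j\downarrow0$ such that the event $A_j$ on which $X_n\in B(0,r_j)$ for all $n$ in the window has probability at least $p/2$. The window lengths should also satisfy $\gamma_{n_j}/\gamma_{n_j+m_j}\to1$; this is possible because $\gamma_n=\kappa(n+n_0)^{-\alpha}$ varies slowly. Stopping at the exit time from $B(0,r_j)$ keeps the state in the region where Lemma~\ref{g:lem:extension} gives $V_n=\nabla h(X_n)$ almost surely.

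\textbf{Occupation measures and descent.} Define $\nu_j$ as the average over $n$ in the window of the laws of $(H_{X_n,\gamma_n},V_n)$ on $\mathcal C$-type states, restricted to the stopped event and normalized. By construction,
\[
\frac{h(X_n)-h(X_{n+1})}{\gamma_n}=H_{X_n,\gamma_n}(0)-H_{X_n,\gamma_n}(-V_n+\eta_{n+1}).
\]
Since $\eta_{n+1}$ is independent of $\mathcal F_n$, the conditional expectation of the right side is $d(H_{X_n,\gamma_n},V_n)$. Multiplying by $\gamma_n$ and summing, the total descent telescopes and is at most $2Lr_j$. We will choose $\gamma_n$ nearly constant on the window and $m_j\gamma_{n_j}\gg r_j$, so that $r_j/(m_j\gamma_{n_j})\to0$; this forces $\int d\,d\nu_j\to0$. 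The boundary terms from stopping are controlled the same way, since each has magnitude $O(Lr_j)$.

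\textbf{Stationarity.} Next we show $\int (P\psi-\psi)\,d\nu_j\to0$ for bounded continuous test functions $\psi$ on states. The sum of $\psi(\text{state}_{n+1})-\psi(\text{state}_n)$ telescopes to $O(\norm{\psi}_\infty)$, and dividing by $m_j$ makes it vanish. The difficulty is that the actual next state is $(H_{X_{n+1},\gamma_{n+1}},V_{n+1})$, while the kernel produces $(T_\Delta H_{X_n,\gamma_n},\nabla H(\Delta))$. These agree up to two errors: the scale changes from $\gamma_n$ to $\gamma_{n+1}$, which is a uniformly small perturbation on compacts because $\gamma_{n+1}/\gamma_n\to1$; and $V_{n+1}=\nabla h(X_{n+1})=\nabla H_{X_n,\gamma_n}(\Delta)$ exactly, using the Gaussian density and Rademacher's theorem. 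By compactness of $\mathcal C$ (Lemma~\ref{g:lem:profilecompactness}) and of $\overline B(0,L)$, the sequence $\nu_j$ is tight, so a subsequence converges weakly to some $\mu$ supported on $\mathcal C\times\overline B(0,L)$. Moreover $d$ is continuous on states, being a Gaussian expectation of locally uniformly convergent Lipschitz functions with Gaussian tails, so $\int d\,d\mu=0$.

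\textbf{Main obstacle.} The hardest step is passing $\int P\psi\,d\nu_j\to\int P\psi\,d\mu$, because $P$ is not obviously Feller: the gradient component $\nabla H(\Delta)$ is discontinuous in $H$. I would first try to avoid this by testing only with $\psi$ that depend on $v$ through integrated quantities, and by proving a state-test identity directly on the limit. Concretely, I would use the fact that for the convolution of $H$ with the Gaussian law, $\E[\phi(\nabla H(-v+\eta))]$ equals an expression obtained by Gaussian integration by parts, valid for linear $\phi$; this is continuous in $H$ under locally uniform convergence. Extending from such tests to all tests would then rely on the Lipschitz structure of $H$ and an approximation argument, which is where most of the care is needed.
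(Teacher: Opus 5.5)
Your architecture (late windows, stopped occupation measures, a value telescope for the mean descent, a test-function telescope for approximate stationarity, then a weak limit on a compact state space) is the paper's. Recording states at the varying scale $\gamma_n$ instead of the fixed window scale $s_j=\gamma_{n_j}$ is a harmless variant. Every transition becomes an exact unit step, at the price of a uniformly small rescaling error between $H_{x,\gamma_{n+1}}$ and $H_{x,\gamma_n}$; the paper instead uses $P_{\gamma_n/s_j}$ and continuity in $\lambda$.

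\textbf{The gap.} The genuine gap is the step you call the main obstacle and leave open. To pass $\int(P\psi-\psi)\,d\nu_j\to0$ to $\int(P\psi-\psi)\,d\mu=0$, you need $P\psi$ to be continuous on $E=\mathcal D\times\overline B(0,L)$ for a measure-determining class of $\psi$. (The $\nu_j$ live on $\mathcal D$, not $\mathcal C$, so $P$ and $d$ must be defined there.) Your workaround cannot supply this.
\begin{itemize}
\item Gaussian integration by parts only handles tests that enter $v$ linearly, and even then not when the test also depends on the recentered function $T_\Delta H$. Such tests determine the $H$-marginal and the mean of the mark, not the joint law, so they cannot give $\mu P=\mu$. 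The later argument needs full stationarity, e.g.\ $\mu(G)=1$ in \eqref{g:eq:stationarygradient} and the stationary normal chain.
\item Lipschitz structure alone cannot close the gap either. Take the non-definable $1$-Lipschitz $h(y)=\int_0^y\cos(t^{-2})\,dt$, centers $x_j\downarrow0$ and scales $s_j=x_j^2$. Then $H_{x_j,s_j}\to0$ locally uniformly, while $\nabla H_{x_j,s_j}(\Delta)=\cos((x_j+s_j\Delta)^{-2})$ acquires an arcsine-type law. So $P\psi$ with $\psi(H,v)=\norm{v}^2$ is genuinely discontinuous.
\end{itemize}
The missing ingredient is definability. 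Uniform monotonicity for the definable family of coordinate-line differences $t\mapsto H(y+te_i)-J(y+te_i)$ of actual difference quotients bounds their number of monotone pieces independently of all parameters. This yields $\int_{Q_R}\norm{\nabla H-\nabla J}\le C_R\sup_{Q_R}|H-J|$, which extends to $\mathcal D$ through $L^1_{\mathrm{loc}}$ limits of gradients. Combined with the bounded, $L^1$-convergent Gaussian densities of the displacement, it makes $P_\lambda\psi$ jointly continuous in $(H,v,\lambda)$ for every $\psi\in C(E)$ (Proposition~\ref{g:prop:kernel}). So the kernel \emph{is} Feller after Gaussian averaging, and that is exactly what the limit step uses.

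\textbf{Smaller repairs.} A few further points need fixing.
\begin{itemize}
\item The exit step can overshoot the ball by $\gamma_n(L+\norm{\eta_{n+1}})$ with unbounded noise, so the boundary terms are not $O(Lr_j)$. After dividing by $\gamma_{n_j}\E\sigma_j$ you need a bound such as $\E\max_{i\le m_j}\norm{\eta_{n_j+i}}\le\sqrt{m_j\tr\Sigma}$; a Wald-type bound on the exit noise gives only $O(1)$.
\item The recorded mark should be $g(X_n)=\nabla h(X_n)$ rather than $V_n$. After an exit, $V_{n+1}\in\pc f(X_{n+1})$ need not equal $\nabla h(X_{n+1})$ or lie in $\overline B(0,L)$. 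The one-step identity with $P$ requires the mark $\nabla H_{X_n,\gamma_n}(\Delta)=\nabla h(X_{n+1})$.
\item ``Restricted to the stopped event'' must mean stopping at the exit time. You may then normalize by $\E\sigma_j$, or condition only on a past event such as the paper's $B_j\in\mathcal F_{n_j}$. Conditioning on your future event $A_j$ destroys the $N(0,\Sigma)$ law of $\eta_{n+1}$ given $\mathcal F_n$.
\item The support claim $\mu(\mathcal C\times\overline B(0,L))=1$ does not follow from compactness of $\mathcal C$. It needs the uniform estimate $\sup_{\norm{x}<r_j}\operatorname{dist}(H_{x,\gamma_n},\mathcal C)\to0$ over the window.
\end{itemize}
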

The full proof is in Appendices~\ref{g:sec:windows}--\ref{g:sec:convergence}. Here we give a proof sketch following the window branch of Figure~\ref{g:fig:route}. First, assuming $\PP(X_n\to0)>0$, we apply Lemma~\ref{g:lem:window} to select late windows $[n_j,n_j+m_j]$ whose iterates remain in the ball of radius $j^{-2}$ with conditional probability tending to one. There are increasingly many recorded steps in conditional expectation, and the fixed scale $s_j=\gamma_{n_j}$ makes $\gamma_n/s_j\to1$ uniformly over each window.

Next, we define a probability $\nu_j$ by averaging the distributions of the recorded pairs $(H_{X_n,s_j},g(X_n))$. We normalize the conditional expected visit counts by the conditional expected number of records. These measures live on the compact space $E=\mathcal D\times\overline B(0,L)$, where $\mathcal D$ is the locally uniform closure of all actual difference quotients. Appendix~\ref{g:sec:profiletools} extends $P$ and $d$ to this space.

Under these measures, we can show that the mean descent and the mean change of each continuous state test tend to zero. Summing the decreases of $h$ leaves only the difference between its initial and final values. The chosen radius and window length make the conditional expectation of this difference negligible after normalization, even if the last update leaves the ball. Similarly, successive changes of any continuous state test $\psi$ telescope to two bounded endpoint values. Their average vanishes as the expected number of records grows. Replacing the nearly unit step ratios by one, Lemmas~\ref{g:lem:occupation-descent} and~\ref{g:lem:occupation} give
\[
 \int_E d\,d\nu_j\longrightarrow0,
 \qquad
 \int_E(P\psi-\psi)\,d\nu_j\longrightarrow0
 \quad(\psi\in C(E)).
\]
Finally, by compactness we take a common subsequence $\nu_{j_\ell}\Rightarrow\mu$. The shrinking centers and scales put its support in $\mathcal C\times\overline B(0,L)$. Bounded continuity of $d$ and $P\psi$ passes both identities to this same limit: its mean descent is zero and it is stationary. The measure $\mu$ may involve different limiting functions $H$; no single common function limit is required.

Applying Lemma~\ref{g:prop:negative} to this $\mu$ gives $0=\int d\,d\mu\ge a^2>0$, the required contradiction.

\begin{corollary}[Finite sets and generic linear tilts]\label{g:cor:finite-generic}\label{g:rem:generic}
Under the function, initialization, noise, and step-size assumptions of
Theorem~\ref{g:thm:main}, every adapted Clarke selection satisfies
\[
 \PP(\exists x\in A:X_n\to x)=0
\]
for each finite $A\subset\R^d$ with $0\notin\pl f(x)$ for all $x\in A$.
Moreover, outside a definable exceptional set of dimension less than
$d$, each fixed tilt $u$ gives a finite set $R_u$ of sharply repulsive
points of $f_u(x)=f(x)-\ip{u}{x}$. Under the same initialization,
noise, and step-size assumptions, every adapted Clarke selection for
$f_u$ satisfies $\PP(\exists x\in R_u:X_n^u\to x)=0$.
\end{corollary}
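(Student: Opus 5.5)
The first assertion follows from Theorem~\ref{g:thm:main} by a union bound. For a finite $A$, the events $\{\exists x\in A:X_n\to x\}$ and $\bigcup_{x\in A}\{X_n\to x\}$ coincide. Each $x\in A$ satisfies $0\notin\pl f(x)$, and the function, initialization, noise, step size and adapted selection are exactly as in Theorem~\ref{g:thm:main}, so each $\PP(X_n\to x)=0$. A finite union of null events is null.

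For the generic tilt statement, the plan is to reduce to the finite case.

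First, note that $f_u$ is locally Lipschitz and definable, with $\pl f_u=\pl f-u$ and $\pc f_u=\pc f-u$. The algorithm for $f_u$ is therefore again of the form \eqref{g:eq:algorithm}, with the same noise and step size. Every sharply repulsive point of $f_u$ satisfies $0\notin\pl f_u(x)$ by Definition~\ref{def:sharp-main}. Once $R_u$ is shown to be finite, the first part applied to $f_u$ with $A=R_u$ gives the conclusion.

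It remains to show that $R_u$ is finite outside a definable set of tilts of dimension less than $d$. A sharply repulsive point of $f_u$ is Clarke critical, so $u\in\pc f(x)$. I would use the definable Clarke critical value structure, namely a Whitney stratification of $f$ into finitely many definable $C^1$ strata. Let $\Gamma$ be the definable set of pairs $(x,u)$ with $u\in\pc f(x)$ and $x$ sharply repulsive for $f_u$. On the stratum $M$ containing $x$, the projection formula gives $P_{T_xM}\,\pc f(x)=\{\nabla (f|_M)(x)\}$. Hence $u$ lies in the fiber $\nabla(f|_M)(x)+(\text{a subset of }N_xM)$.

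I would show that the part of the Clarke critical set relevant here has dimension at most $d$. The Clarke subdifferential graph of a definable locally Lipschitz function has dimension $d$; this is the Drusvyatskiy--Ioffe--Lewis type result, and I would cite it or reprove it via stratification. The projection $\Gamma\to\R^d$, $(x,u)\mapsto u$, then has generic fibers of dimension $\dim\Gamma-d\le0$ by definable trivialization. Concretely, the set $\{u:\dim(\text{fiber})\ge1\}$ is definable, and it must have dimension less than $d$, since otherwise $\dim\Gamma\ge d+1$. Definable sets of dimension $0$ are finite, so $R_u$ is finite off this exceptional set.

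The main obstacle is the bound $\dim\Gamma\le d$, that is, the dimension of the Clarke subgradient graph. I would try to prove it first by stratification. Over a $k$-dimensional stratum $M$, the projection formula confines $\pc f(x)$ to an affine translate of $N_xM$, which has dimension $d-k$. The graph piece over $M$ therefore has dimension at most $k+(d-k)=d$, and taking the finite union over strata completes the bound.
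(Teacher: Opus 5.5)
Your proposal is correct and follows the paper's proof: take a finite union of the null events from Theorem~\ref{g:thm:main}, then use that the graph of $\pc f$ has dimension $d$, together with fiber dimension, to make $\{x:u\in\pc f(x)\}\supseteq R_u$ finite for $u$ outside a definable set of dimension less than $d$. You sketch the dimension bound via the stratified projection formula $\pc f(x)\subset\nabla(f|_M)(x)+N_xM$, whereas the paper simply cites it; the one fix is to take $\Gamma$ to be the full graph of $\pc f$ rather than building sharp repulsiveness into it, since that condition quantifies over smooth manifolds and need not be definable, and it is unnecessary because $R_u$ already lies in the Clarke critical fiber.
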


The corollary follows from a finite union and generic finiteness of the Clarke critical set; Appendix~\ref{g:sec:scope} gives the proof. It establishes sharply repulsive-point avoidance in the generic question of \cite[Remark~4]{BHS}. \begin{remark}[Noise assumptions]\label{g:rem:noise}
The conclusion also holds for iid noise independent of the past with $\E[\eta_1]=0$ and $\E[\norm{\eta_1}^2]<\infty$, whose joint density is bounded, continuous, and everywhere positive. This includes independent centered Laplace or logistic coordinates and fixed nonsingular linear transforms of these vectors. Centering and the second moment control expected descent and the window boundary terms; the density supplies gradient identification, continuity of the transition, and access to escape regions. The projected noise retains absolute continuity, full support, centering, and finite second moments, which suffice for the normal-chain argument; hence the same proof applies.
\end{remark}

\begin{figure}[!tbp]
\centering
\includegraphics[width=.78\linewidth]{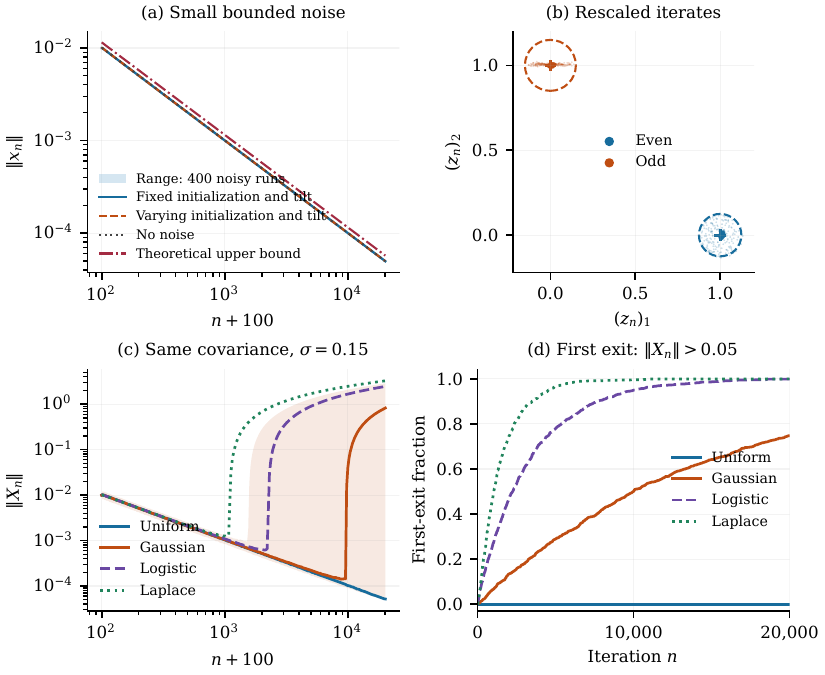}
\caption{Experiments on the original counterexample. (a,b) Uniform noise of radius $0.005$: physical distances and rescaled iterates. (c,d) Four noise laws with covariance $0.15^2I_2$: median distance and cumulative first exit from radius $0.05$, using all $1{,}024$ trajectories per law. Gaussian shading in (c) marks the pointwise $10$th--$90$th percentiles.}
\label{fig:experiments}
\end{figure}
\section{Numerical experiments}\label{sec:experiments}
On the original objective~\eqref{p:eq:function}, we examine the bounds on rescaled iterates and compare finite-time escapes under different noise distributions. For uniform noise of radius $1/200$, we run two groups of $200$ trajectories for $20{,}000$ updates: one fixes $u=0$ and $x_0=a/100$; the other varies the admissible initial point and tilt, with each tilt fixed along its trajectory. Figure~\ref{fig:experiments}(a,b) shows distances shrinking with the step size and rescaled iterates alternating between two bounded regions, as predicted by the pathwise bounds.

For the noise comparison, we fix $u=0$, $X_0=(0.01,0)$, and $\gamma_n=1/(n+100)$. Gaussian, disk-uniform, logistic, and Laplace noises all have covariance $0.15^2I_2$. For each distribution, we run four independent groups of $256$ trajectories for $20{,}000$ updates. Figure~\ref{fig:experiments}(c,d) shows escapes under the full-support noises, while uniform-noise distances remain small. Logistic and Laplace escapes occur earlier, and the Gaussian band includes trajectories that have not yet escaped. Equal covariance controls the noise scale, but support and tails both vary, so neither effect is isolated.

The first exit $\norm{X_n}>0.05$ is a finite-horizon statistic; it neither requires permanent escape nor is equivalent to $\PP(X_n\to0)=0$. Appendix~\ref{app:experiments} gives individual trajectories and escape counts.

\section{Conclusion}\label{sec:conclusion}
Our bounded-noise counterexample resolves the open question of \citet{BHS} in the negative, showing that stochastic subgradient descent can converge almost surely to a nonminimal sharply repulsive point. Under Gaussian noise, however, we prove avoidance of prescribed finite sets with $0\notin\pl f(x)$ and, under generic linear perturbations, all sharply repulsive points. A two-cycle argument and a stationary descent contradiction establish the respective results. Building on the Gaussian result, a future direction is to establish avoidance of active strict saddles beyond weak convexity.

\label{sec:main-end}
\clearpage
\section*{AI use statement}
Generative AI tools assisted with developing, checking, and writing mathematical proofs. The authors reviewed and revised the arguments and take responsibility for the mathematical claims and the final manuscript.

\begingroup
\raggedright
\bibliography{references}
\bibliographystyle{iclr2027_conference}
\endgroup
\clearpage
\appendix
\etocdepthtag.toc{appendix}
\begingroup
\etocsettagdepth{main}{none}
\etocsettagdepth{appendix}{subsection}
\etocsetnexttocdepth{subsection}
\etocsettocstyle{\section*{Contents of the Appendix}}{}
\tableofcontents
\endgroup

\section{Preliminaries}\label{p:sec:preliminaries}\label{sec:appendix-preliminaries}
We collect the notation, nonsmooth and definable geometry, and
probabilistic tools used in both Sections~\ref{sec:counterexample}
and~\ref{sec:gaussian}. Results specific to the counterexample or the
Gaussian argument are developed in Appendices~\ref{p:sec:proof}
and~\ref{g:sec:proofs}, respectively.
\subsection{Notation and nonsmooth geometry}\label{p:sec:subdifferentials}\label{p:sec:regularity}\label{p:sec:active}
We use the Euclidean inner product $\inner{x}{y}=x^\top y$ and norm $\norm{x}$. The open and closed balls are $B(c,r)$ and $\overline B(c,r)$; $\norm{A}_{\op}=\sup_{\norm{h}=1}\norm{Ah}$ is the operator norm. The vectors $e_i$ are coordinate vectors, and $e=(1,1)$ in the counterexample. We write $D\Phi$ for a derivative matrix, $\nabla f$ for a gradient, $\nabla^2 f$ for a Hessian, and $\conv S$ for a convex hull.

\begin{definition}[Fr\'echet, limiting, and Clarke subdifferentials; {\citealp[Definition~8.3 and Theorem~9.61]{RW}}]
Let $f:\R^d\to\R$ be locally Lipschitz. The Fr\'echet subdifferential consists of the vectors $p$ such that
\begin{equation}\label{g:eq:frechet}
 \liminf_{\substack{h\to0\\h\ne0}}\frac{f(x+h)-f(x)-\inner{p}{h}}{\norm{h}}\ge0.
\end{equation}
The limiting subdifferential $\pl f(x)$ consists of all limits $p_j\to p$ with $x_j\to x$ and $p_j\in\pf f(x_j)$. The Clarke subdifferential is
\[
 \pc f(x)=\conv\left\{\lim_j\nabla f(x_j):x_j\to x,\ f\text{ is differentiable at }x_j,\ \text{the limit exists}\right\}.
\]
A point is Clarke critical if $0\in\pc f(x)$.
\end{definition}
Continuity makes function-value convergence automatic in the limiting definition. In this setting,
\[
 \pf f(x)\subseteq\pl f(x)\subseteq\pc f(x),\qquad \pc f(x)=\conv\pl f(x).
\]
The limiting subdifferential has a closed graph \citep[Proposition~8.7]{RW}. All three subdifferentials equal $\{\nabla f(x)\}$ when $f$ is $C^1$ near $x$, and linear perturbations satisfy \citep[Exercise~8.8]{RW}
\[
 \partial_\bullet(f-\inner{u}{\cdot})(x)=\partial_\bullet f(x)-u,
 \qquad\bullet\in\{F,L,C\}.
\]
Subdifferential regularity is equivalent here to equality of the three subdifferentials \citep[Definition~7.25 and Corollary~8.11]{RW}. Weak convexity means that $f+(\lambda/2)\norm{\cdot}^2$ is convex for some $\lambda\ge0$ \citep[Section~2]{BHS}. It implies regularity because convex functions are regular and smooth perturbations preserve regularity \citep[Proposition~8.12 and Exercise~8.8(c)]{RW}.

\begin{definition}[Active manifold and sharply repulsive critical point]\label{g:def:sharp}
An \emph{active manifold} $M$ at a Clarke critical point $x^\star$ is a smooth manifold through $x^\star$ such that $f|_M$ is smooth near $x^\star$ and there are $\varepsilon,\delta_0>0$ for which \citep[Definition~3]{BHS}
\[
\norm{v}\ge\delta_0
\quad\text{whenever }x\in B(x^\star,\varepsilon)\setminus M,
\quad v\in\partial_C f(x).
\]
Following \citet[Definition 6.2.4]{SchechtmanThesis}, a Clarke critical point $x^\star$ is called \emph{sharply repulsive} if it admits an active manifold $M$ such that
\[
x^\star\text{ is a local minimizer of }f|_M,
\qquad 0\in\partial_C f(x^\star)\setminus\partial_L f(x^\star).
\]
\end{definition}
The counterexample in Section~\ref{sec:counterexample} uses $M=\{0\}$. A singleton is a $0$-dimensional smooth manifold; the restriction of any function to it is constant and hence has a local minimum. 

\subsection{Definability and variational tools}\label{g:sec:prelim}
The arguments use definability to control limits and variational tools to transfer subgradient bounds. We collect these facts together with the Lipschitz extension used to handle updates outside a local neighborhood.
The notation $\mathcal L^d$ denotes Lebesgue measure on $\R^d$.
\begin{definition}[Definable sets and functions; {\citealp[Section~2]{vddm}}]\label{g:def:definable}
An o-minimal expansion of the ordered real field is a collection
$\mathcal S=(\mathcal S_m)_{m\ge1}$ of families of subsets of $\R^m$
that contains all semialgebraic sets, is closed under finite unions,
intersections, complements, Cartesian products, and coordinate
projections, and whose sets in $\mathcal S_1$ are finite unions of
points and intervals. A semialgebraic set is a finite Boolean
combination of polynomial equalities and inequalities.
A set is definable if it belongs to the fixed collection $\mathcal S$;
a function is definable if its graph does. Real parameters are allowed.
\end{definition}
We fix one such structure throughout; see \cite[Chapter~1]{vdd}
and \cite[Section~2]{vddm}. Semialgebraic functions, including
polynomials, norms, and finite maxima of polynomials, are definable.
An expansion may contain additional functions while retaining the
closure properties and the $1$-dimensional finiteness condition.

A function $F:\R^d\to\R$ that is
definable and locally Lipschitz admits a finite partition of its
domain into smooth pieces on each of which it is $C^1$. In particular,
there is a definable open set $O$ with
\[
 F|_O\in C^1(O),\qquad \mathcal L^d(\R^d\setminus O)=0,
 \qquad \pc F(x)=\{\nabla F(x)\}\quad(x\in O).
\]
These statements follow from smooth cell decomposition
\cite[Chapters~3 and~7]{vdd}, \cite[Section~4.2]{vddm}, and the
Clarke gradient formula \cite[Theorem~9.61]{RW}. Rademacher's theorem gives almost-everywhere differentiability
for any locally Lipschitz function \cite[Theorem~9.60]{RW}; definability
provides the open $C^1$ set above.

We also use a uniform version of $1$-dimensional monotonicity.
For a fixed definable family of functions of one real variable, each
member is monotone on finitely many intervals, with a bound on the
number of intervals independent of the parameters
\cite[Sections~4.1, 4.2 and~4.4]{vddm}.

The generalized Fermat rule says that a local minimizer has $0$ as a
Fr\'echet subgradient \cite[Theorem~10.1]{RW}. Together with
\eqref{g:eq:frechet}, it gives the smooth perturbation and change-of-scale
rules used below.

We will use the following two variational facts.
\begin{lemma}[Exact Lipschitz extension]\label{g:fact:extension}
Let $A\subset\R^d$ be nonempty and let $F:A\to\R$ \mbox{be $L$-Lipschitz. Then}
\[
 \widetilde F(x)=\inf_{y\in A}\{F(y)+L\norm{x-y}\}
\]
is a finite, globally $L$-Lipschitz function equal to $F$ on $A$.
If $A$ and $F$ are definable, so is $\widetilde F$.
\end{lemma}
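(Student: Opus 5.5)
The plan is to verify the four claims of Lemma~\ref{g:fact:extension} directly from the infimal-convolution formula, in this order: finiteness, equality with $F$ on $A$, the global Lipschitz bound, and definability.

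\textbf{Finiteness.} Fix any $y_0\in A$. For every $y\in A$, the Lipschitz property of $F$ on $A$ gives
\[
F(y)\ge F(y_0)-L\norm{y-y_0}.
\]
Adding $L\norm{x-y}$ and using the triangle inequality $\norm{x-y}\ge\norm{y-y_0}-\norm{x-y_0}$ yields
\[
F(y)+L\norm{x-y}\ge F(y_0)-L\norm{x-y_0}.
\]
So the infimum is bounded below. It is bounded above by the value at $y=y_0$, hence it is finite.

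\textbf{Equality on $A$.} Take $x\in A$. Choosing $y=x$ gives $\widetilde F(x)\le F(x)$. Conversely, the Lipschitz bound gives $F(y)+L\norm{x-y}\ge F(x)$ for every $y\in A$. Hence $\widetilde F(x)=F(x)$.

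\textbf{Global Lipschitz bound.} For any $x,x'\in\R^d$ and $y\in A$,
\[
F(y)+L\norm{x-y}\le F(y)+L\norm{x'-y}+L\norm{x-x'}.
\]
Taking the infimum over $y$ on both sides gives $\widetilde F(x)\le\widetilde F(x')+L\norm{x-x'}$. Exchanging the roles of $x$ and $x'$ gives the reverse inequality, so $\widetilde F$ is $L$-Lipschitz.

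\textbf{Definability.} This is the only step with any subtlety, and it is still routine. The graph of $\widetilde F$ is the set of pairs $(x,t)$ such that two conditions hold. First, for all $y\in A$, $t\le F(y)+L\norm{x-y}$. Second, for every $\varepsilon>0$ there is $y\in A$ with $F(y)+L\norm{x-y}<t+\varepsilon$.

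This is a first-order formula in the fixed o-minimal structure. It uses the definable set $A$, the graph of $F$, the norm (which is semialgebraic), and the parameter $L$. Quantifiers correspond to projections and complements, so by the closure properties in Definition~\ref{g:def:definable} the graph is definable.

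The only care needed is to express the infimum through these quantifiers rather than as an operation on functions. Since finiteness was proved above, the infimum is a real number, and the two conditions characterize it exactly.
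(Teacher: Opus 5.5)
Your proposal is correct and follows essentially the same route as the paper. The paper cites the McShane--Whitney theorem for finiteness, agreement on $A$, and the $L$-Lipschitz bound, and gets definability by expressing the infimum with real quantifiers; you carry out the standard McShane--Whitney verification explicitly and give that quantifier description in full.
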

This is the McShane--Whitney theorem \cite[Theorem~2.3]{heinonen};
definability follows by expressing the infimum with real quantifiers.

\begin{lemma}[A smooth lower support for a subgradient]\label{g:fact:support}
Let $F:\R^d\to\R$ be locally Lipschitz. For $v\in\pf F(x)$ there is a $C^1$ function $\phi$ on a
neighborhood of $x$ such that
\[
 \phi(x)=F(x),\qquad \nabla\phi(x)=v,\qquad
 \phi(y)<F(y)\quad\text{for }y\ne x\text{ in that neighborhood}.
\]
\end{lemma}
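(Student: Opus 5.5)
We take $\phi(y)=F(x)+\ip{v}{y-x}-\rho(\norm{y-x})$, where $\rho$ is a $C^1$ gauge on $[0,\infty)$ with $\rho(0)=\rho'(0)=0$ that dominates the Fr\'echet error from above. Then $\phi(x)=F(x)$ and $\nabla\phi(x)=v$ hold automatically. The only real work is building $\rho$ so that $\phi$ is $C^1$ and strictly below $F$ off $x$.

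\textbf{Step 1 (error modulus).} Set
\[
\omega(t)=\sup_{0<\norm{h}\le t}\max\Bigl\{0,\;\frac{F(x)+\ip{v}{h}-F(x+h)}{\norm{h}}\Bigr\},\qquad 0<t\le r .
\]
Here $r>0$ is chosen so that $F$ is $L$-Lipschitz on $\overline B(x,r)$. Then $\omega$ is finite (at most $L+\norm{v}$) and nondecreasing. By \eqref{g:eq:frechet}, $\omega(t)\to0$ as $t\downarrow0$. For $0<\norm{h}\le r$ this gives
\[
F(x+h)\ge F(x)+\ip{v}{h}-\omega(\norm{h})\norm{h}.
\]

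\textbf{Step 2 (smoothing the modulus).} This is the only delicate step, because $\omega$ need not be continuous. Define
\[
\bar\omega(t)=\frac1t\int_t^{2t}\omega(s)\,ds
\]
on $(0,r/2]$, with $\bar\omega(0)=0$. Since $\omega$ is nondecreasing, $\bar\omega\ge\omega$ and $\bar\omega$ is continuous. Moreover, $\bar\omega(t)\le\omega(2t)\to0$ as $t\downarrow0$.

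Next set
\[
\rho(t)=\int_0^{2t}\bar\omega(s)\,ds+t^2 .
\]
Then $\rho$ is $C^1$ on $[0,r/4]$ with $\rho'(t)=2\bar\omega(2t)+2t$, so $\rho(0)=\rho'(0)=0$. Because $\bar\omega$ is nondecreasing, we also have $\rho(t)\ge t\,\bar\omega(t)+t^2>t\,\omega(t)$ for $t>0$. If $\bar\omega$ fails to be nondecreasing, we first replace $\omega$ by its monotone upper regularization, which is harmless.

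\textbf{Step 3 (conclusion).} The map $y\mapsto\rho(\norm{y-x})$ is $C^1$ on $B(x,r/4)$. Away from $y=x$ this is the chain rule. At $y=x$, its gradient $\rho'(\norm{y-x})\,(y-x)/\norm{y-x}$ tends to $0$ because $\rho'(0)=0$. Hence $\phi$ is $C^1$ on $B(x,r/4)$, with $\phi(x)=F(x)$ and $\nabla\phi(x)=v$. For $0<\norm{y-x}<r/4$, Step 1 and the strict inequality in Step 2 give
\[
\phi(y)<F(x)+\ip{v}{y-x}-\omega(\norm{y-x})\norm{y-x}\le F(y),
\]
which is the required strict lower support.
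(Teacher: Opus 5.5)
Your proof is correct, and it is essentially the standard modulus-smoothing construction behind \citet[Proposition~8.5]{RW}, which the paper cites instead of proving: bound the Fr\'echet error by a monotone modulus, smooth it by dyadic averaging and integration, and subtract $\norm{y-x}^2$ to make the support strict. Two small cleanups: the hedge at the end of Step~2 can be deleted, since $\bar\omega(t)=\int_1^2\omega(tu)\,du$ is automatically nondecreasing (and $\int_t^{2t}\bar\omega\ge\int_t^{2t}\omega=t\bar\omega(t)$ gives the needed bound even without monotonicity); and differentiability of $\rho(\norm{\cdot-x})$ at $x$ itself should be stated explicitly, which follows directly from $\rho(t)=o(t)$.
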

The lower support may be chosen $C^1$ and strict away from the
contact point. We use this form of \cite[Proposition~8.5, p.~302]{RW} to transfer
subgradient bounds through convergence of local minimizers.

\begin{definition}[Conservative field; {\citealp[Definition~2.13]{schechtman}}]\label{g:def:conservative}
Let $\mathcal V:\R^d\rightrightarrows\R^d$ be a set-valued map,
so that $\mathcal V(x)\subset\R^d$, and assume it has closed graph,
nonempty compact values, and is locally bounded. It is conservative
for a locally Lipschitz function $F$ if, along every
absolutely continuous curve $x$, for almost every $t$,
\[
 (F\circ x)'(t)=\ip{v}{\dot x(t)}
 \quad\text{for every }v\in\mathcal V(x(t)).
\]
\end{definition}
This is the chain-rule form of a conservative field; see
\cite[Section~2.4, Definition~2.13]{schechtman}. A variational stratification of $(F,\mathcal V)$ is a partition
into smooth manifolds satisfying the Whitney compatibility condition,
on each of which $F$ is smooth and every vector in $\mathcal V(x)$
projects onto the gradient of the restriction to that manifold.
The Whitney condition requires limiting tangent spaces of an
approaching stratum to contain the tangent space of the stratum it
meets; see \cite[Definitions~2.3 and~3.1]{schechtman}.
For definable locally Lipschitz $F$, the pair $(F,\pc F)$ admits
such a stratification and $\pc F$ is conservative in the sense of
Definition~\ref{g:def:conservative}
\cite[Section~2.4, Proposition~2.15]{schechtman}.

\begin{lemma}[A conservative limit for one parameter]\label{g:fact:conservative}
Let $F_s:\R^d\to\R$, $0<s\le1$, be jointly definable, locally
Lipschitz, and locally uniformly convergent to a locally Lipschitz
function $F_0$. Let $\mathcal V_s$ be jointly definable closed fields with
nonempty values, and suppose each pair $(F_s,\mathcal V_s)$ admits a definable
variational stratification. If the field values are uniformly bounded
on each compact set for small $s$, then the outer graphical limit of
$\mathcal V_s$, and its convexification, are conservative for $F_0$.
At $x$, this limit consists of all $v$ obtained from sequences
$s_j\downarrow0$, $x_j\to x$, and $v_j\in\mathcal V_{s_j}(x_j)$
with $v_j\to v$.
\end{lemma}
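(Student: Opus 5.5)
The plan is to prove conservativity of the limit field $\mathcal V_0$ directly through the chain-rule characterization in Definition~\ref{g:def:conservative}, then pass to the convexification. Here $\mathcal V_0(x)$ denotes the set of limits of $v_j\in\mathcal V_{s_j}(x_j)$ with $s_j\downarrow0$ and $x_j\to x$.

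\textbf{Regularity of $\mathcal V_0$.} The local uniform bound shows that $\mathcal V_0$ is locally bounded with compact values. The diagonal definition shows that its graph is closed. Nonemptiness follows from choosing $x_j=x$ and any $v_j\in\mathcal V_{s_j}(x)$ and extracting a convergent subsequence.

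\textbf{Reduction to a one-parameter family.} Joint definability lets us regard $\{(s,x,v):v\in\mathcal V_s(x)\}$ as one definable set in $\R^{1+2d}$. Its closure intersected with $\{s=0\}$ is exactly the graph of $\mathcal V_0$, since $\mathcal V_0$ is the outer limit. We then add $s$ as a variable. We define $G(s,x)=F_s(x)$ on $(0,1]\times\R^d$ and $G(0,x)=F_0(x)$. Local uniform convergence makes $G$ continuous. We also need local Lipschitz continuity of $G$ in $x$ uniformly in $s$. This follows because each $\mathcal V_s$ is conservative for $F_s$, by the variational stratification, and has bounded values, so $F_s$ is Lipschitz on compacts with a uniform constant.

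\textbf{Key step: conservativity via a stratification of the family.} We take a definable Whitney stratification of the closure of the graph of $(s,x)\mapsto(G,\mathcal V_s)$ that is compatible with $\{s=0\}$ and with the strata of each $(F_s,\mathcal V_s)$. On the stratum $\{s>0\}$, each vector of $\mathcal V_s$ projects onto the gradient of $F_s$ restricted to the $x$-stratum. The Whitney condition transfers this projection formula to limits at $s=0$: if $(s_j,x_j)\to(0,x)$ along a stratum $\Gamma$ and $x$ lies in a stratum $S\subset\{s=0\}$, then the limiting tangent spaces of $\Gamma$ contain $T_S$. Consequently, the limiting vectors $v$ satisfy $P_{T_S}v=\nabla(F_0|_S)(x)$. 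This is the standard argument that a definable field admitting a variational stratification is conservative (\cite[Proposition~2.15]{schechtman}), applied now to $(F_0,\mathcal V_0)$ on the stratification of $\{s=0\}$ induced by this stratification. It yields a variational stratification of $(F_0,\mathcal V_0)$, and hence conservativity.

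\textbf{Convexification.} The chain rule $(F_0\circ x)'=\ip{v}{\dot x}$ is linear in $v$, so it passes to convex combinations. The convex hull of a locally bounded closed-graph field with compact values again has closed graph, compact values, and local boundedness. Therefore $\conv\mathcal V_0$ is also conservative.

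\textbf{Main obstacle.} The hard part is the key step: getting the projection formula for $F_0$ on the strata inside $\{s=0\}$. The difficulty is that gradients of the restrictions of $F_s$ need not converge to gradients of the restrictions of $F_0$. If the Whitney argument proves delicate, the first fallback is a curve-wise argument. Take an absolutely continuous curve $x(t)$ and use definable curve selection to write $F_0(x(t))-F_0(x(0))$ as a limit of $\int_0^t\ip{v_{s_j}(\tau)}{\dot x}\,d\tau$ along the same curve. Then apply a measurable selection and Fatou-type upper and lower limits to obtain the chain rule for every element of $\mathcal V_0$. Here definability is used to exclude the pathological exceptional sets that occur for general Lipschitz sequences.
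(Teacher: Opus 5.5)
The paper does not prove this statement from scratch. It invokes Schechtman's Theorem~4.3 and Corollary~4.5, noting only that the uniform bound makes the horizon field $\{0\}$. Your regularity, reduction and convexification steps are fine. Your key step is exactly the content of that theorem, and the argument you give does not justify it.

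\textbf{Why the key step fails.} For $s>0$ the projection formula lives on the fibers $\Gamma_s=\Gamma\cap(\{s\}\times\R^d)$, namely $P_{T\Gamma_s}v=\nabla(F_s|_{\Gamma_s})$. Whitney~(a) for the pair $(\Gamma,\{0\}\times S)$ only gives $\{0\}\times T_S\subset\lim T\Gamma$. If $\Gamma$ becomes tangent to the level sets of $s$, then $\lim T\Gamma=\{0\}\times\R^d$. That inclusion is then automatic and says nothing about $\lim T\Gamma_{s_j}$. For the same reason, Whitney regularity of the graph of $G$ does not control $\lim\nabla(F_{s_j}|_{\Gamma_{s_j}})$ when $\partial_sG$ blows up.

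\textbf{A concrete failure.} Take $d=2$ and $\sigma(x)=x_2(x_1^2+x_2^2)$. Set $\Gamma_s=\{x:x_2>0,\ \sigma(x)=s\}$ and $F_s\equiv0$. Let $\mathcal V_s(x)=\conv\{\pm\nabla\sigma(x)/\norm{\nabla\sigma(x)}\}$ on $\Gamma_s$ and $\{0\}$ elsewhere. All hypotheses hold. Put $\Gamma=\bigcup_{s>0}\{s\}\times\Gamma_s$ and $S=\R\times\{0\}$. Then $\Gamma$ and $\{0\}\times S$ form a Whitney regular pair, since their union is the smooth surface-with-boundary $\{(\sigma(x),x):x_2\ge0\}$. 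Yet along $x=(t,t)$, $s=2t^3$, the constant normal $(1,2)/\sqrt5$ lies in $\mathcal V_0(0)$. Its projection onto $T_S$ is nonzero, although $\nabla(F_0|_S)=0$. So your ``consequently'' fails at such points. The induced stratification is not variational until it is refined, and justifying that refinement is the entire difficulty.

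\textbf{What is missing.} You need regularity relative to the parameter: a definable stratification of the graph of $G$ satisfying Thom's $(a_\pi)$ condition for $\pi(s,x,y)=s$. Such stratifications exist for definable data by a theorem of Loi. Refine it so that its fibers over $s>0$ are variational for $(F_s,\mathcal V_s)$. Now let $(s_j,x_j)\in\Gamma$ tend to $(0,x)$ with $x\in S$. Pass to a subsequence with $T_{x_j}\Gamma_{s_j}\to\tau$ and $g_j=\nabla(F_{s_j}|_{\Gamma_{s_j}})(x_j)\to g$; the $g_j$ are bounded by the uniform Lipschitz constant. The fiber tangent spaces of the graph then converge to $\{(0,w,\ip{g}{w}):w\in\tau\}$. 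Hence $(a_\pi)$ gives $T_xS\subset\tau$ and $P_{T_xS}g=\nabla(F_0|_S)(x)$. Combined with $P_\tau v=g$, this yields $P_{T_xS}v=\nabla(F_0|_S)(x)$. In the example, $(a_\pi)$ fails exactly at the origin, which must become its own stratum.

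\textbf{The fallback does not work.} Your curve-wise argument cannot replace this step. Along a fixed curve you see only limits of $\mathcal V_{s_j}(x(\tau))$, whereas elements of $\mathcal V_0(x(\tau))$ generally arise from points $x_j\ne x(\tau)$. Conservativity requires the chain rule for every element, but Fatou or weak limits give an integrated identity for a single selection. The non-definable family $F_s(x)=s\sin(x/s)$ on $\R$ passes such integrated tests. Yet its limit field $[-1,1]$ is not conservative for $F_0=0$.
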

This bounded form follows from \cite[Theorem~4.3 and Corollary~4.5]{schechtman}:
local uniform convergence gives the required potential limits and the
bounded field has horizon field $\{0\}$: limits of vanishing positive
multiples of its values can only be $0$. Only a single scalar parameter
is used. The limit field need not equal $\pc F_0$.

\subsection{Probability and stationary distributions}\label{g:sec:probability}
The original stochastic iteration is defined on a probability space
$(\Omega,\mathcal F,\PP)$, with the standard probability-space and random-variable conventions of \citet[Sections~1.1--1.3]{durrett}. Here $\omega\in\Omega$ specifies a realization
of the initialization, the entire noise sequence, and any randomness
in the adapted subgradient selections; $\mathcal F$ is the sigma-field
of events, and $\PP$ is their probability measure.
A filtration $(\mathcal F_n)_{n\ge0}$ is an increasing sequence of
sub-sigma-fields of $\mathcal F$; $\mathcal F_n$ represents the
information available through time $n$. A process is adapted if its
value at time $n$ is $\mathcal F_n$-measurable \citep[Section~4.2]{durrett}. In particular, $X_n$,
$V_n$, and the noises already observed are measurable with respect
to the current history. The sigma-field $\mathcal F$ also contains
future events, whereas $\mathcal F_n$ records only the current
information. For an integer stopping time $\tau$, $\mathcal F_\tau$
consists of events $A\in\mathcal F$ such that
$A\cap\{\tau\le n\}\in\mathcal F_n$ for every $n$ \citep[Section~4.8]{durrett}.

\paragraph{Expectations and conditioning on the original space.}
Unsubscripted expectations of path variables use $\PP$:
$\E[Y]=\int_\Omega Y(\omega)\,\PP(d\omega)$.
For a probability $Q$ on $(\Omega,\mathcal F)$ and an integrable $Y$,
$\E_Q[Y\mid\mathcal G]$, where $\mathcal G\subseteq\mathcal F$,
is a $\mathcal G$-measurable random variable satisfying \citep[Section~4.1]{durrett}
\[
 \int_A\E_Q[Y\mid\mathcal G]\,dQ=\int_A Y\,dQ
 \qquad(A\in\mathcal G).
\]
This specifies it up to equality $Q$-almost surely. It is generally
a random variable, while $\E_Q[Y]$ is a number. For nonnegative
$Y$, the same notation is also used for extended expectations,
without assuming integrability. Conditional
probabilities mean $\PP(A\mid\mathcal G)=\E[\mathbf1_A\mid\mathcal G]$.
For an event $B$ with $\PP(B)>0$, conditioning instead defines the
probability space $(\Omega,\mathcal F,\PP_B)$, where
\[
 \PP_B(A)=\frac{\PP(A\cap B)}{\PP(B)},\qquad
 \E_{\PP_B}[Y]=\frac{1}{\PP(B)}\int_B Y\,d\PP=\E[Y\mid B].
\]
Section~\ref{g:sec:windows} displays this conditioning explicitly as
$\E[Y\mid B_j]$. Its notation $\E[Y\mid\mathcal F_t,B_j]$
means conditional expectation given $\mathcal F_t$ on
$(\Omega,\mathcal F,\PP(\,\cdot\mid B_j))$; these identities are
understood almost surely on $B_j$. We use the tower property and the rule for taking known factors outside a conditional expectation as stated in \citet[Theorems~4.1.13--4.1.14]{durrett}, applied on this conditional probability space.

\paragraph{A single noise vector and a concrete trajectory space.}
Write $\mathcal B(S)$ for the Borel sigma-field of a metric space $S$
and $\Gamma_\Sigma=N(0,\Sigma)$ for the Gaussian probability measure \citep[Section~3.10]{durrett}.
In the Gaussian argument, $\E_\eta$ and $\PP_\eta$ use
$(\R^d,\mathcal B(\R^d),\Gamma_\Sigma)$ with coordinate random
variable $\eta(z)=z$. Thus, with all other arguments fixed,
\[
 \E_\eta[\phi(\eta)]
 =\int_{\R^d}\phi(z)\,\Gamma_\Sigma(dz)
 =\int_{\R^d}\phi(z)\varphi_\Sigma(z)\,dz.
\]
A block of $k$ independent noise vectors uses
$((\R^d)^k,\mathcal B(\R^d)^{\otimes k},
\Gamma_\Sigma^{\otimes k})$; the product-measure construction is recalled in \citet[Section~1.7]{durrett}.
For initialization with distribution $\rho_0$, independent of the
noise, and subgradient selections determined measurably by the
observed history, one concrete realization of the whole iteration is
\[
 \begin{aligned}
 \Omega&=\R^d\times(\R^d)^{\mathbb N},&
 \mathcal F&=\mathcal B(\R^d)\otimes
                   \mathcal B(\R^d)^{\otimes\mathbb N},\\
 \PP&=\rho_0\otimes\Gamma_\Sigma^{\otimes\mathbb N},&
 \mathcal F_n&=\sigma(X_0,\eta_1,\ldots,\eta_n).
 \end{aligned}
\]
Here $X_0$ and $\eta_i$ are coordinate projections, and the updates
define $X_n$ as functions of these coordinates. Deterministic
initialization corresponds to a point mass $\rho_0$. The countable product is justified by the extension theorem \citep[Theorem~2.1.21]{durrett}.
The theorems allow any filtered probability space satisfying their
adaptation and independence assumptions, including additional
randomness in the selections. For the bounded-noise example, replace
$\Gamma_\Sigma$ by the uniform ball distribution; for
Remark~\ref{g:rem:noise}, replace it by the specified noise distribution.

\paragraph{Expectations over states and independent noise.}
For a probability $\rho$ on a measurable space $(S,\mathcal A)$,
an integral $\int_S\phi\,d\rho$ is an expectation on
$(S,\mathcal A,\rho)$, with coordinate random variable $Z(z)=z$.
In particular, for the state space $E$ of
Section~\ref{g:sec:profiletools}, the integrals against $\nu_j$,
$\mu$, or $\mu_\theta$ use
$(E,\mathcal B(E),\nu_j)$, $(E,\mathcal B(E),\mu)$, or
$(E,\mathcal B(E),\mu_\theta)$, respectively.
The underlying sample is a single function and vector pair.
The integral over orbit labels uses $(\Theta,\mathcal B(\Theta),\nu)$,
where $\mathcal B(\Theta)$ denotes its standard Borel sigma-field;
this $\nu$ is distinct from the stopped averages $\nu_j$.
When a state $(H,V)$ with distribution $\rho$ is combined with
independent Gaussian noise, the probability space is
\[
 \bigl(E\times\R^d,\,
       \mathcal B(E)\otimes\mathcal B(\R^d),\,
       \rho\otimes\Gamma_\Sigma\bigr),
\]
and its coordinate random variables are $(H,V)$ and $\eta$. Fubini's theorem \citep[Theorem~1.7.2]{durrett} gives, for integrable $\Phi$,
\[
 \E_{\rho\otimes\Gamma_\Sigma}[\Phi(H,V,\eta)]
 =\int_E\left(\int_{\R^d}\Phi(H,v,z)\,
                   \Gamma_\Sigma(dz)\right)\rho(d(H,v)).
\]
This is the joint expectation in \eqref{g:eq:averaged-descent} when
$\rho=\mu$. The normal-position expectations in
Section~\ref{g:sec:energy} use the corresponding product space on
$N\times\R^d$, specified there.
We write $\mathbf1_A$ for an indicator and $\mathcal L(Z)$ for the
distribution of $Z$ under the probability measure specified locally.

For a metric space $K$, $C(K)$ denotes the real-valued continuous
functions on $K$, $\norm{\psi}_{\infty,K}=\sup_{x\in K}|\psi(x)|$,
and $\operatorname{Lip}(\psi)$ denotes the Lipschitz constant.
We omit $K$ when the domain is clear. The notation
$\mu_j\Rightarrow\mu$ means weak convergence of probability measures,
that is, convergence of integrals against every bounded continuous
test function; the same notation for random variables refers to
their distributions \citep[Section~3.10]{durrett}.
\begin{definition}[Stationary probability; {\citealp[Section~5.2 and Example~6.1.2]{durrett}}]\label{g:def:stationary}
On a measurable state space $K$, a Markov kernel $P$ assigns a
probability measure $P(x,\cdot)$ to each $x\in K$, with
$x\mapsto P(x,A)$ measurable for every measurable $A\subset K$.
Thus $P(x,A)$ is the conditional probability that the next state
belongs to $A$. Write
\[
 P\psi(x)=\int\psi(y)P(x,dy),\qquad
 (\mu P)(A)=\int P(x,A)\,d\mu(x).
\]
A probability $\mu$ is stationary if $\mu P=\mu$, equivalently
\[
 \int P\psi\,d\mu=\int\psi\,d\mu
 \quad\text{for every bounded measurable }\psi.
\]
\end{definition}
The transition kernel $P$ is distinct from the path probability
measure $\PP$. For a fixed input $x$, $P\psi(x)$ can also be read as
an expectation on $(K,\mathcal B(K),P(x,\cdot))$ when $K$ is a
Borel state space.
Thus a state with law $\mu$ still has law $\mu$ after one transition;
see \cite[Example~6.1.2]{durrett}. The general-state-space Markov
kernel convention is used throughout.

\begin{lemma}[Upward conditional-expectation convergence]\label{g:lem:upward}
Let $(\mathcal F_n)_{n\ge0}$ be a filtration on
$(\Omega,\mathcal F,\PP)$ and put
$\mathcal F_\infty=\sigma(\bigcup_n\mathcal F_n)$. For an integrable random variable $Z$,
\[
 \E[Z\mid\mathcal F_n]\longrightarrow\E[Z\mid\mathcal F_\infty]
 \quad\text{almost surely and in }L^1.
\]
In particular, for $A\in\mathcal F_\infty$,
\[
 \PP(A\mid\mathcal F_n)\longrightarrow\mathbf1_A
 \quad\text{almost surely and in }L^1.
\]
\end{lemma}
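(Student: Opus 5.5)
This is Lévy's upward theorem, and I would prove it through the standard uniformly integrable martingale argument. Set $M_n=\E[Z\mid\mathcal F_n]$. By the tower property, $(M_n)$ is a martingale with respect to $(\mathcal F_n)$.

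\textbf{Uniform integrability.} By conditional Jensen, $|M_n|\le\E[|Z|\mid\mathcal F_n]$. Hence for $K>0$ and $\delta>0$,
\[
\E\bigl[|M_n|\mathbf1_{\{|M_n|>K\}}\bigr]\le\E\bigl[|Z|\mathbf1_{\{|M_n|>K\}}\bigr].
\]
Markov's inequality gives $\PP(|M_n|>K)\le\E|Z|/K$. Absolute continuity of $A\mapsto\E[|Z|\mathbf1_A]$ then makes the right side small uniformly in $n$ once $K$ is large. So the family $(M_n)$ is uniformly integrable and in particular bounded in $L^1$.

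\textbf{Convergence.} Doob's martingale convergence theorem gives $M_n\to M_\infty$ almost surely, with $M_\infty$ integrable. Uniform integrability upgrades this to $L^1$ convergence \citep[Theorems~4.2.11 and~4.6.4]{durrett}.

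\textbf{Identifying the limit.} This is the main step. Each $M_n$ is $\mathcal F_\infty$-measurable, so $M_\infty$ can be taken $\mathcal F_\infty$-measurable. Fix $A\in\mathcal F_m$. For $n\ge m$ we have $\int_A M_n\,d\PP=\int_A Z\,d\PP$. Letting $n\to\infty$ and using $L^1$ convergence gives $\int_A M_\infty\,d\PP=\int_A Z\,d\PP$.

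The two sides are finite signed measures in $A$, and they agree on the $\pi$-system $\bigcup_m\mathcal F_m$, which generates $\mathcal F_\infty$. To pass to all of $\mathcal F_\infty$, I would split $Z$ and $M_\infty$ into positive and negative parts and apply the $\pi$–$\lambda$ theorem to the resulting finite measures. This yields equality on all of $\mathcal F_\infty$, so $M_\infty=\E[Z\mid\mathcal F_\infty]$ almost surely.

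\textbf{The indicator case.} Take $Z=\mathbf1_A$ with $A\in\mathcal F_\infty$. Then $\E[\mathbf1_A\mid\mathcal F_\infty]=\mathbf1_A$, which gives the second statement.

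This is exactly \citep[Theorem~4.6.8]{durrett}, so one could also simply cite it.
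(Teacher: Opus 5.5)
Your proposal is correct and follows essentially the paper's route: the paper gives no argument of its own and simply cites L\'evy's upward theorem and its $0$--$1$ law corollary \citep[Theorems~4.6.8--4.6.9]{durrett}. Your uniform-integrability, Doob-convergence and $\pi$--$\lambda$ identification steps are the standard proof behind that citation, and the indicator case follows as you say because $\mathbf1_A$ is $\mathcal F_\infty$-measurable.
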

These are \cite[Theorems~4.6.8--4.6.9]{durrett}. The event form
will turn conditional probabilities based on longer histories into
indicators of the corresponding eventual behavior.
\begin{lemma}[Doob's $L^2$ maximal inequality]\label{g:lem:doob}
Let $(\mathcal F_k)_{k=0}^m$ be a filtration and let $(M_k)_{k=0}^m$
be a square-integrable Euclidean-valued martingale: $M_k$ is
$\mathcal F_k$-measurable and \mbox{$\E[M_{k+1}\mid\mathcal F_k]=M_k$. Then}
\[
 \E\!\left[\max_{0\le k\le m}\norm{M_k}^2\mid\mathcal F_0\right]
 \le4\E[\norm{M_m}^2\mid\mathcal F_0].
\]
Consequently, for every positive $\mathcal F_0$-measurable $r$,
\[
 \PP\!\left(\max_{0\le k\le m}\norm{M_k}>r\mid\mathcal F_0\right)
 \le\frac{4\E[\norm{M_m}^2\mid\mathcal F_0]}{r^2}.
\]
\end{lemma}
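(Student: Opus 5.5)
The plan is to reduce to the classical unconditional Doob $L^2$ inequality and then localize on $\mathcal F_0$-events. First, $\norm{M_k}$ is a nonnegative submartingale. The Euclidean norm is convex, so conditional Jensen gives
\[
\E[\norm{M_{k+1}}\mid\mathcal F_k]\ge\norm{\E[M_{k+1}\mid\mathcal F_k]}=\norm{M_k}.
\]
Square integrability makes every $\norm{M_k}$ square integrable; in particular $\norm{M_m}\in L^2$. Doob's maximal inequality in $L^2$ for nonnegative submartingales \citep[Section~4.4]{durrett} then gives the unconditional bound
\[
\E\Bigl[\max_{0\le k\le m}\norm{M_k}^2\Bigr]\le4\E[\norm{M_m}^2].
\]

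To upgrade this to the conditional statement, I fix $A\in\mathcal F_0$. The process $M_k\mathbf1_A$ is again a square-integrable martingale for the same filtration, because $\mathbf1_A$ is $\mathcal F_0\subseteq\mathcal F_k$-measurable and can be pulled out of $\E[\cdot\mid\mathcal F_k]$. Applying the unconditional bound to $M_k\mathbf1_A$ gives
\[
\int_A\max_{k}\norm{M_k}^2\,d\PP\le4\int_A\norm{M_m}^2\,d\PP\qquad(A\in\mathcal F_0).
\]
The integrand on the left is bounded by $\sum_k\norm{M_k}^2$, so it is integrable. By the defining property of conditional expectation, this holds for all $A\in\mathcal F_0$ precisely when the two $\mathcal F_0$-measurable versions satisfy the first displayed inequality almost surely. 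This follows by taking $A=\{\E[\max_k\norm{M_k}^2\mid\mathcal F_0]>4\E[\norm{M_m}^2\mid\mathcal F_0]\}$, which lies in $\mathcal F_0$.

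For the probability bound, I use conditional Markov with an $\mathcal F_0$-measurable threshold. Since $r>0$, we have $\mathbf1\{\max_k\norm{M_k}>r\}\le\max_k\norm{M_k}^2/r^2$ pointwise. Taking $\E[\cdot\mid\mathcal F_0]$ and pulling out the known factor $r^{-2}$ gives the result. This factor is $\mathcal F_0$-measurable and the product is nonnegative, so the extended conditional expectation convention of Appendix~\ref{g:sec:probability} applies without integrability concerns \citep[Theorem~4.1.14]{durrett}. Combining with the first inequality finishes the proof.

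The only delicate point is this last step with a random threshold $r$: one should not apply Markov for a fixed constant and substitute afterwards. The pointwise indicator bound followed by pulling out the $\mathcal F_0$-measurable factor avoids the issue. I expect no real obstacle beyond this.
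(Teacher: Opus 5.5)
Your proposal is correct and follows essentially the same route as the paper. The paper applies \citep[Theorem~4.4.4]{durrett} to the nonnegative submartingale $\norm{M_k}$, tests against events of $\mathcal F_0$ to obtain the conditional form, and finishes with a conditional Markov step for the $\mathcal F_0$-measurable threshold. The paper also remarks that an integer horizon known at time $0$ (as in the later use with $m=\lceil Tr\rceil$) is handled by running the same argument on each level event $\{m=\ell\}$. On your citation: \citep[Theorem~4.1.14]{durrett} assumes integrability, so pulling out the possibly unbounded factor $r^{-2}$ is cleanest by truncating it to $r^{-2}\wedge n$ and applying monotone convergence.
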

Apply \cite[Theorem~4.4.4]{durrett}, with $p=2$, to the nonnegative
submartingale $\norm{M_k}$. Testing against events in $\mathcal F_0$
gives the conditional form; conditional Markov inequality gives the
second display. An integer horizon known at time $0$ is handled
on each of its level events.

\section{Proofs for the bounded-noise counterexample}\label{p:sec:proof}
\label{p:sec:main}
The proof of the counterexample in Section~\ref{sec:counterexample} combines two arguments: subgradient calculations establish a nonminimal sharply repulsive point $0$, and a pathwise two-cycle argument proves convergence to it. A radial modification then makes the objective coercive while preserving its local geometry and the convergent trajectories.
\subsection{The objective is globally Lipschitz but not weakly convex}\label{p:sec:properties}

The representation as a sum of a norm and piecewise linear terms gives global Lipschitz continuity, definability, and positive homogeneity. A downward jump in a directional derivative excludes weak convexity, while homogeneity supplies the rescaling used in the convergence proof.

\begin{lemma}[Function class and positive homogeneity]\label{p:lem:function-class}
For every $u\in\R^2$, the function $f_u$ is globally Lipschitz with constant at most $2+2\sqrt2+\norm{u}$, definable, and positively homogeneous of degree $1$. It is not weakly convex.
\end{lemma}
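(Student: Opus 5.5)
We split the statement into four short verifications, handling the easy structural claims first and saving weak convexity for last.

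\emph{Lipschitz constant.} Write $f_u$ as a sum of four terms.
\begin{itemize}
\item The Euclidean norm is $1$-Lipschitz.
\item $\min\{x,y\}=\tfrac12(x+y-|x-y|)$ is $\sqrt2$-Lipschitz, since $|x-y|\le\sqrt2\norm{(x,y)}$ and $|x+y|\le\sqrt2\norm{(x,y)}$, so its gradients (where defined) are $(1,0)$ or $(0,1)$. Even the crude bound $\tfrac12(\sqrt2+\sqrt2)=\sqrt2$ suffices.
\item $2\max\{0,-x-y\}$ is $2\sqrt2$-Lipschitz.
\item The tilt $-\inner{u}{\cdot}$ contributes $\norm{u}$.
\end{itemize}
Summing gives at most $1+\sqrt2+2\sqrt2+\norm u$. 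This exceeds the stated $2+2\sqrt2+\norm u$ only if we use the $\sqrt2$ bound for the min term. Using the sharper bound that the min has gradients of norm $1$ gives $1+1+2\sqrt2+\norm u$, exactly the claimed constant.

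\emph{Definability.} The graph of $f_u$ is semialgebraic: $\sqrt{x^2+y^2}$ is the nonnegative root of $t^2=x^2+y^2$, and min, max and linear terms are piecewise polynomial. Hence $f_u$ is definable by Definition~\ref{g:def:definable}.

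\emph{Positive homogeneity.} For $t>0$, each term scales by $t$: the norm, $\min\{tx,ty\}=t\min\{x,y\}$, $\max\{0,-tx-ty\}=t\max\{0,-x-y\}$, and the linear tilt.

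\emph{Failure of weak convexity.} This is the main point, and we argue it by contradiction. If $f_u+(\lambda/2)\norm{\cdot}^2$ were convex, then along every line the one-sided directional derivatives could only jump upward. Equivalently, the restriction to any line is weakly convex in one variable.

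The plan is to locate a line crossing a kink of $-\min\{x,y\}$ or $-2\max\{0,-x-y\}$ away from the origin, where the norm is smooth. Such a kink contributes a downward (concave) corner.

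Take the line $s\mapsto(1+s,1-s)$ through $(1,1)$, which crosses the diagonal $x=y$ at $s=0$.
\begin{itemize}
\item $-\min\{x,y\}$ equals $-1+|s|$ near $s=0$. This is a convex corner, so it is the wrong choice.
\end{itemize}
Instead use the anti-diagonal kink of $-2\max\{0,-x-y\}$ at a point such as $(1,-1)$, along the line $s\mapsto(1+s,-1+s)$.
\begin{itemize}
\item Here $-x-y=-2s$, so the term is $-2\max\{0,-2s\}$.
\item Its right derivative at $s=0$ is $0$ and its left derivative is $4$: the slope drops by $4$, a concave corner.
\item The other terms are smooth near $(1,-1)$, because $\min\{x,y\}=y$ there and the norm is smooth away from the origin.
\item The tilt is linear and the quadratic is $C^1$, so neither can cancel the downward jump.
\end{itemize}
Hence $\phi(s)=f_u(1+s,-1+s)+(\lambda/2)\norm{(1+s,-1+s)}^2$ has $\phi'_+(0)<\phi'_-(0)$, which contradicts the convexity of $\phi$ for every $\lambda$.

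The only step needing care is this choice of kink: one must pick a point where exactly one nonsmooth term is active and its corner is concave. The diagonal kink fails because it is convex, while the anti-diagonal kink works.
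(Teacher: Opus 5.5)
Your proof is correct and follows the paper's argument: the same term-by-term Lipschitz bound and semialgebraic description, and the same concave corner of $-2\max\{0,-x-y\}$ at $(1,-1)$ along the line $(1+s,-1+s)$, where the paper's full restriction has one-sided slopes $3$ and $-1$ (the same downward jump of $4$ that you found). One cleanup: delete the remark that the crude $\sqrt2$ bound for the min term ``suffices,'' since it does not give the stated constant, and justify the $1$-Lipschitz bound directly via $|\min\{x,y\}-\min\{\tilde x,\tilde y\}|\le\max\{|x-\tilde x|,|y-\tilde y|\}$, as the paper does.
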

\begin{proof}
The Euclidean norm is $1$-Lipschitz. Also,
\[
|\min\{x,y\}-\min\{\widetilde x,\widetilde y\}|
\le\max\{|x-\widetilde x|,|y-\widetilde y|\}
\le\norm{(x,y)-(\widetilde x,\widetilde y)}.
\]
The map $(x,y)\mapsto\max\{0,-x-y\}$ is $\sqrt2$-Lipschitz. Thus $f$ is globally Lipschitz with constant at most $2+2\sqrt2$, and $f_u$ has constant at most $2+2\sqrt2+\norm{u}$.

The graph of the norm is described by $r\ge0$ and $r^2=x^2+y^2$. Minimum and maximum of finitely many linear functions have graphs described by finite combinations of polynomial equalities and inequalities. Such descriptions are definable in the real field. Closure of definable functions under finite sums and linear tilts then shows that $f$ and $f_u$ are definable, with $u$ as a real parameter, in the real field and hence in any o-minimal expansion of it. Each term is positively homogeneous of degree $1$, so
\[
f(tz)=t f(z),\qquad f_u(tz)=t f_u(z)
\qquad(t\ge0).
\]
Only nonnegative scaling is asserted; the identities need not hold for negative $t$.

Restrict $f$ to the line $z(t)=(1+t,-1+t)$. Here $\min\{1+t,-1+t\}=-1+t$ and $x+y=2t$. Therefore
\[
f(z(t))=
\begin{cases}
\sqrt{2+2t^2}+1+3t,&t<0,\\
\sqrt{2+2t^2}+1-t,&t\ge0.
\end{cases}
\]
The derivative of $\sqrt{2+2t^2}$ at $t=0$ is $0$. Hence the one-sided derivatives of this restriction are
\[
\left.\frac{d}{dt^-}f(z(t))\right|_{t=0}=3,
\qquad
\left.\frac{d}{dt^+}f(z(t))\right|_{t=0}=-1.
\]
For a convex function of one variable, the left derivative cannot exceed the right derivative. The downward jump $3>-1$ violates this condition. Adding $\lambda\norm{z(t)}^2/2$ adds a differentiable function whose derivative is continuous at $t=0$; it cannot remove the jump. The linear tilt likewise adds the same derivative to both sides. Since the restriction of a convex function to a line is convex, no quadratic correction can make $f_u$ convex.
\end{proof}

The objective therefore has the required Lipschitz and definability properties without weak convexity. We next examine the geometry of its critical point.

\subsection{The point \texorpdfstring{$0$}{0} is a nonminimal sharply repulsive Clarke critical point}\label{p:sec:geometry}\label{p:sec:bound}

A uniform subgradient bound away from $0$ supplies the active-manifold condition for $M=\{0\}$. We combine this bound with explicit subdifferential calculations at $0$ to establish sharp repulsiveness, and use a descent ray to verify nonminimality.

Set
\begin{equation}
\delta=1-\frac1{\sqrt2}>0.
\label{p:eq:delta}
\end{equation}
\begin{lemma}[A uniform Clarke subgradient bound away from $0$]\label{p:lem:off-origin}
For the function $f$ in \eqref{p:eq:function},
\begin{equation}
\norm{v}\ge\delta
\qquad(z\ne0,\ v\in\partial_C f(z)).
\label{p:eq:sharp}
\end{equation}
Consequently, for $\norm{u}<1/200$, every $z\ne0$ and $v_u\in\partial_C f_u(z)$ satisfy $\norm{v_u}\ge\delta-\norm{u}>0$.
\end{lemma}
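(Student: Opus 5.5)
We work on the unit circle and compute $\pc f(z)$ piece by piece. Since $f$ and its Clarke subdifferential are positively homogeneous ($\pc f(tz)=\pc f(z)$ for $t>0$, which follows from $f(tz)=tf(z)$ in Lemma~\ref{p:lem:function-class}), it suffices to treat $\norm{z}=1$, $z=(\cos\theta,\sin\theta)$.

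Off the origin the norm is $C^1$ with gradient $n(z)=z/\norm{z}$. Write $f=\norm{\cdot}+\ell$ with $\ell=-\min\{x,y\}-2\max\{0,-x-y\}$. The smooth summand passes through the Clarke subdifferential exactly, so $\pc f(z)=n(z)+\pc\ell(z)$. The function $\ell$ is piecewise linear with four open pieces, separated by the lines $x=y$ and $x+y=0$:
\begin{itemize}[nosep]
\item $x<y$, $x+y>0$: gradient $(-1,0)$;
\item $y<x$, $x+y>0$: gradient $(0,-1)$;
\item $x<y$, $x+y<0$: gradient $(1,2)$;
\item $y<x$, $x+y<0$: gradient $(2,1)$.
\end{itemize}
By the Clarke gradient formula recalled in Appendix~\ref{p:sec:preliminaries}, $\pc\ell(z)$ is the convex hull of the gradients of the pieces whose closure contains $z$. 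The claim \eqref{p:eq:sharp} is therefore equivalent to
\[
\operatorname{dist}\bigl(-n(z),\pc\ell(z)\bigr)\ge\delta\qquad(\norm{z}=1).
\]

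\textbf{Open pieces.} Here $\pc\ell(z)$ is a single vector $g$. For the piece with $x<y$, $x+y>0$, we have $\theta\in(\pi/4,3\pi/4)$ and
\[
\norm{n-(1,0)}=2\sin(\theta/2)\ge2\sin(\pi/8)>\delta .
\]
The piece $y<x$, $x+y>0$ is the mirror image under swapping coordinates. For the two pieces with $x+y<0$, the crude bound
\[
\norm{n+g}\ge\norm{g}-1=\sqrt5-1>\delta
\]
suffices.

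\textbf{Boundary rays.} There are four rays, and on each $\pc\ell(z)$ is a segment joining two of the gradients above. We compute the distance from $-n$ to the line carrying that segment, or to the segment itself when that is needed.
\begin{itemize}[nosep]
\item Ray $\theta=\pi/4$: the segment is $[(-1,0),(0,-1)]$ and $-n=-(1,1)/\sqrt2$. The nearest point is $(-\tfrac12,-\tfrac12)$, which has norm $1/\sqrt2$, so the distance is exactly $1-1/\sqrt2=\delta$. This is the extremal case and explains the value of $\delta$.
\item Ray $\theta=3\pi/4$: the segment $[(-1,0),(1,2)]$ lies on the line $y=x+1$. The point $-n=(1,-1)/\sqrt2$ is at distance $(\sqrt2+1)/\sqrt2>\delta$ from that line.
\item Ray $\theta=7\pi/4$: this is symmetric to $\theta=3\pi/4$ under swapping coordinates.
\item Ray $\theta=5\pi/4$: the segment $[(1,2),(2,1)]$ lies on the line $x+y=3$, while $-n=(1,1)/\sqrt2$ lies on $x+y=\sqrt2$. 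The distance between these lines is $(3-\sqrt2)/\sqrt2>\delta$.
\end{itemize}

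\textbf{Tilt and main obstacle.} The tilt statement follows from $\pc f_u(z)=\pc f(z)-u$ together with the triangle inequality. The main obstacle is bookkeeping rather than a single hard estimate. We must make sure that $\pc\ell(z)$ on the boundary rays is exactly the hull of the adjacent piece gradients, with no extra limits; this follows because only finitely many linear pieces exist near $z\ne0$. We must also justify the sum rule $\pc(\norm{\cdot}+\ell)=n+\pc\ell$, which holds because one summand is $C^1$ near $z$. The only tight case is the ray $\theta=\pi/4$, where the bound $\delta$ is attained.
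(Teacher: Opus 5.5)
Your proof is correct. It reaches the bound by a different decomposition than the paper's.

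The paper does not reduce to the unit circle, and it does not treat the four open arcs and four boundary rays separately. It splits only by the sign of $x+y$ and uses one well-chosen projection in each half-plane. For $x+y>0$ it shows
\[
\langle v,w\rangle=1-\min\{w_1,w_2\}\ge1-\tfrac1{\sqrt2},\qquad w=z/\norm{z}.
\]
This holds for every $p\in\conv\{e_1,e_2\}$, so the diagonal tie is absorbed automatically. For $x+y<0$ it uses $\langle v,e\rangle/\norm{e}\ge(3-\sqrt2)/\sqrt2$, since every admissible $p$ has coordinate sum $1$; your $\theta=5\pi/4$ computation is exactly this bound. Only the antidiagonal $x+y=0$ is handled by explicit coordinates, so the paper needs three cases instead of eight.

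Your route is more mechanical: you compute exact distances from $-n(z)$ to the point or segment $\pc\ell(z)$. In exchange, it states openly the two standard facts the argument rests on. One is the sum rule with a $C^1$ summand. The other is that at a nonzero kink, $\pc\ell(z)$ is the hull of the two adjacent piece gradients. Your version also shows that $\delta$ is sharp: it is attained at $k(1,1)$, with $k$ as in Lemma~\ref{p:lem:clarke-criticality}, at points of the positive diagonal. The paper does not record this.

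One small wording fix: the reduction to $\norm{z}=1$ uses degree-$0$ homogeneity of $\pc f$, i.e.\ $\pc f(tz)=\pc f(z)$. That is what you wrote in the formula, but "positively homogeneous" alone would suggest degree $1$.
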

\begin{proof}
Write $z=(x,y)$, $\rho=\norm{z}$, $w=z/\rho$, and $e=(1,1)$. Thus $\norm{w}=1$. For the base function $f=f_0$ in \eqref{p:eq:function}, at a point off the two lines $x=y$ and $x+y=0$, differentiation of its three terms gives
\begin{equation}
v=w-p+2\one_{\{x+y<0\}}e,
\qquad
p=\begin{cases}e_1,&x<y,\\e_2,&y<x.\end{cases}
\label{p:eq:gradient}
\end{equation}
Indeed, the norm contributes $w$, the negative minimum contributes $-p$, and $-2\max\{0,-x-y\}$ contributes $2e$ exactly when $x+y<0$.

At a nonzero point of $x=y$, only the minimum term changes branch, so its two neighboring gradient limits are convexified: $p$ ranges over $\conv\{e_1,e_2\}$. At a nonzero point of $x+y=0$, the minimum branch is fixed, while $0$ and $2e$ are convexified. Thus the last term becomes $te$, $0\le t\le2$. This $t$ is a convexification parameter, not an iteration index or the line parameter of Section~\ref{p:sec:properties}. The lines intersect only at $0$, which is excluded here.

\emph{Case 1: $x+y>0$.} For every admissible $p$, including the tie $x=y$, we have $\inner{p}{w}=\min\{w_1,w_2\}$. It follows that
\[
\inner{v}{w}=1-\min\{w_1,w_2\}
\ge1-\frac1{\sqrt2}=\delta.
\]
The inequality follows because both coordinates of a unit vector cannot exceed $1/\sqrt2$.~By~\mbox{Cauchy--Schwarz},
\[
\norm{v}=\norm{v}\norm{w}\ge|\inner{v}{w}|
\ge\inner{v}{w}\ge\delta.
\]

\emph{Case 2: $x+y<0$.} Every admissible $p$ has coordinate sum $1$. Since $\inner{e}{e}=2$,
\[
\inner{v}{e}=\inner{w}{e}-1+4=3+\inner{w}{e}.
\]
Cauchy--Schwarz gives $\inner{w}{e}\ge-\norm{w}\norm{e}=-\sqrt2$. Applying it once more,
\[
\norm{v}\ge\frac{\inner{v}{e}}{\norm{e}}
\ge\frac{3-\sqrt2}{\sqrt2}>\delta.
\]

\emph{Case 3: $x+y=0$ and $z\ne0$.} Since $w_1+w_2=0$ and $w_1^2+w_2^2=1$, we have
\[
w=(c,-c)\quad\text{or}\quad w=(-c,c),\qquad c=\frac1{\sqrt2}.
\]
In the first case $y<x$, so $p=e_2$ and
\[
v=(c,-c-1)+t(1,1),\qquad 0\le t\le2.
\]
Its first coordinate is $c+t\ge c$, and hence $\norm{v}\ge c>\delta$. In the second case $p=e_1$ and
\[
v=(-c-1,c)+t(1,1),\qquad 0\le t\le2;
\]
its second coordinate is at least $c$. This proves \eqref{p:eq:sharp} everywhere off $0$.

For the tilted function, every $v_u\in\partial_C f_u(z)$ has the form $v-u$ with $v\in\partial_C f(z)$. The \mbox{reverse triangle inequality yields}
\[
\norm{v_u}\ge\norm{v}-\norm{u}\ge\delta-\norm{u}>0.
\]
In particular, no nonzero point is Clarke critical for the stated tilts.
\end{proof}

\phantomsection\label{p:sec:repulsive}

The lower bound excludes Clarke critical points away from $0$. We now show that $0$ itself remains Clarke critical under all the allowed tilts.
\begin{lemma}[Clarke criticality persists under small linear tilts]\label{p:lem:clarke-criticality}
Let $c=1/\sqrt2$ and \mbox{$k=c-1/2>1/5$. Then}
\begin{equation}
(1,-1),\qquad(-1,1),\qquad k(1,1),\qquad-\tfrac12(1,1)
\quad\text{belong to }\partial_C f(0).
\label{p:eq:four}
\end{equation}
Moreover, $D=\conv\{\pm(1,-1),\ \pm k(1,1)\}\subset\partial_C f(0)$, and $0\in\partial_C f_u(0)$ for every $\norm{u}<1/200$.
\end{lemma}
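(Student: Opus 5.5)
The plan is to compute explicit gradient limits at $0$ and use the definition of the Clarke subdifferential as the convex hull of limits $\lim_j\nabla f(z_j)$ with $z_j\to0$. Positive homogeneity of degree $1$ (Lemma~\ref{p:lem:function-class}) makes $\nabla f$ homogeneous of degree $0$ wherever it exists. So if $w$ is a unit direction in an open region where $f$ is smooth and \eqref{p:eq:gradient} holds, then taking $z_j=t_jw$ with $t_j\downarrow0$ gives $\nabla f(w)\in\pc f(0)$. Likewise, if $w_j\to w$ from inside one smooth region, the limit of \eqref{p:eq:gradient} along $t_jw_j$ also lies in $\pc f(0)$.

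\textbf{The two antisymmetric vectors.} Take $w=(1,0)$. It lies in the open region $\{y<x,\ x+y>0\}$, so $p=e_2$ and the indicator vanishes, giving $v=(1,0)-(0,1)=(1,-1)$. Symmetrically, $w=(0,1)$ lies in $\{x<y,\ x+y>0\}$ and gives $(0,1)-(1,0)=(-1,1)$.

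\textbf{The vector $k(1,1)$.} Approach the ray through $(c,c)$ from the two sides of $x=y$, staying in $x+y>0$. From $y<x$ the gradients tend to $(c,c)-e_2=(c,c-1)$, and from $x<y$ they tend to $(c-1,c)$. Both limits lie in $\pc f(0)$, and so does their midpoint $(c-\tfrac12)(1,1)=k(1,1)$.

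\textbf{The vector $-\tfrac12(1,1)$.} Approach the rays through $(c,-c)$ and $(-c,c)$ from the side $x+y>0$, where the indicator term is absent. The first gives $(c,-c)-e_2=(c,-c-1)$ and the second gives $(-c,c)-e_1=(-c-1,c)$. Their midpoint is $-\tfrac12(1,1)$.

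This settles \eqref{p:eq:four}, and convexity of $\pc f(0)$ gives $D\subset\pc f(0)$. The only delicate point is justifying that these one-sided limits are genuine limits of gradients at differentiability points. I would handle it by choosing $w_j$ strictly inside the relevant open cone, where \eqref{p:eq:gradient} is valid, letting $w_j\to w$, and scaling by $t_j\to0$.

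\textbf{Criticality under tilts.} The rhombus $D$ has half-diagonals $\sqrt2$ along $(1,-1)$ and $k\sqrt2$ along $(1,1)$, which are orthogonal. Its inradius is therefore
\[
\frac{\sqrt2\cdot k\sqrt2}{\sqrt{2+2k^2}}=\frac{\sqrt2\,k}{\sqrt{1+k^2}}.
\]
Since $k>1/5$ and $k\mapsto k/\sqrt{1+k^2}$ is increasing, this exceeds $\sqrt2/(5\sqrt{1.04})>1/4$. Hence $B(0,1/4)\subset D\subset\pc f(0)$, and every $\norm{u}<1/200$ satisfies $u\in\pc f(0)$. By the tilt rule $\pc f_u(0)=\pc f(0)-u$, it follows that $0\in\pc f_u(0)$.

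It remains to check $k>1/5$, which holds because $c\approx0.7071$.
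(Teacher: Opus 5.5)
Your proposal is correct and follows the paper's proof: you use the same ray-based gradient limits, obtained from inside the smooth cones via homogeneity, to get the four vectors, and then fit a ball around $0$ inside the rhombus $D$. Your finish by the exact inradius is an equally valid and slightly sharper alternative to the paper's weighted-$\ell_1$ test $|t|+|s|/k\le1$. One step you elide is that $-k(1,1)$ is a vertex of $D$ but not among the four listed vectors, so you should state $-k(1,1)=2k\bigl[-\tfrac12(1,1)\bigr]+(1-2k)\cdot 0$ with $0<k<\tfrac12$ and $0$ the midpoint of $\pm(1,-1)$, as the paper does.
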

\begin{proof}
At every differentiability point, \eqref{p:eq:gradient} is unchanged by positive scaling. Thus, for a fixed smooth direction $d$, the sequence $d/j\to0$ has constant gradient $\nabla f(d)$.

For $d=a$, \eqref{p:eq:gradient} gives $\nabla f(a)=(1,-1)$, while $d=b$ gives $\nabla f(b)=(-1,1)$. Hence both are gradient limits at $0$. They are also limiting subgradients because these points have smooth neighborhoods and their gradients are Fr\'echet subgradients.

For the positive diagonal, choose $\varepsilon_j=1/j$ and the smooth points
\[
z_j^+=\frac1j(1+\varepsilon_j,1),
\qquad z_j^-=\frac1j(1,1+\varepsilon_j).
\]
Both sequences approach $0$ with normalized direction $(c,c)$. The first has $y<x$ and the second has $x<y$, giving
\[
\nabla f(z_j^+)\to(c,c-1),\qquad
\nabla f(z_j^-)\to(c-1,c).
\]
Their average is $(c-1/2,c-1/2)=k(1,1)$. Convexity of $\partial_C f(0)$ therefore proves membership of the third vector. We do not claim that this average is itself a limiting gradient.

For the fourth vector, take $j\ge2$ and
\[
\widetilde z_j^+=\frac1j(1,-1+\varepsilon_j),
\qquad
\widetilde z_j^-=\frac1j(-1+\varepsilon_j,1).
\]
Both have strictly positive coordinate sum, so the $2e$ term in \eqref{p:eq:gradient} is absent. Their normalized directions approach $(c,-c)$ and $(-c,c)$, respectively. Consequently,
\[
\nabla f(\widetilde z_j^+)\to(c,-c-1),\qquad
\nabla f(\widetilde z_j^-)\to(-c-1,c).
\]
The average is $(-1/2,-1/2)$, proving the fourth membership by convexification. This specifies both the approach side and the points tending to $0$.

The first two vectors in \eqref{p:eq:four} have midpoint $0$. Also, $0<k<1/2$, and
\[
-k(1,1)=2k\bigl[-\tfrac12(1,1)\bigr]+(1-2k)0.
\]
Hence the diamond
\[
D=\conv\{\pm(1,-1),\ \pm k(1,1)\}
\]
is contained in $\partial_C f(0)$. To test membership, write
\[
u=t(1,-1)+s(1,1),\qquad
t=\frac{u_1-u_2}{2},\quad s=\frac{u_1+u_2}{2}.
\]
In this basis, the diamond is the weighted unit $\ell_1$ ball: $u\in D$ if and only if $|t|+|s|/k\le1$. Indeed, the four vertices have basis coordinates $(\pm1,0)$ and $(0,\pm k)$, and their convex hull is exactly this inequality. By Cauchy--Schwarz,
\[
|t|\le\frac{\norm{u}}{\sqrt2},\qquad
|s|\le\frac{\norm{u}}{\sqrt2}.
\]
Since $1/k<5$, we obtain
\[
|t|+\frac{|s|}{k}
\le\frac{1+1/k}{\sqrt2}\norm{u}
\leq\frac6{\sqrt2}\norm{u}<1.
\]
Thus $u\in D\subset\partial_C f(0)$, and the tilt rule implies $0\in\partial_C f_u(0)$.
\end{proof}

It remains to exclude limiting criticality at $0$ and to find nearby points with lower objective values.

\begin{lemma}[Sharp repulsiveness and nonminimality]\label{p:lem:sharp-repulsiveness}
For every $\norm{u}<1/200$, we have $\partial_F f_u(0)=\varnothing$ and $0\in\partial_C f_u(0)\setminus\partial_L f_u(0)$. The point $0$ is a sharply repulsive Clarke critical point with active manifold $M=\{0\}$, and is not a local minimizer.
\end{lemma}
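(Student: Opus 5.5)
The plan is to exploit positive homogeneity (Lemma~\ref{p:lem:function-class}) and reduce everything to directional derivatives at $0$. Since $f_u$ is positively homogeneous of degree $1$ and $f_u(0)=0$, the Fr\'echet condition for $v\in\pf f_u(0)$ is equivalent to the global inequality $f_u(z)\ge\inner{v}{z}$ for all $z$. Indeed, the $o(\norm{h})$ term scales away along rays.

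\textbf{Empty Fr\'echet subdifferential.} I would test the inequality $f_u(z)\ge\inner{v}{z}$ on a few explicit directions and show they are incompatible.
\begin{itemize}
\item Along $a=(1,0)$ and $b=(0,1)$ we have $f(a)=f(b)=1$, which gives $v_1\le 1-u_1$ and $v_2\le 1-u_2$.
\item Along $-e=(-1,-1)$, the value is $f(-e)=\sqrt2+1-4=\sqrt2-3$. This gives $v_1+v_2\ge 3-\sqrt2-(u_1+u_2)$.
\item Combining the three, $2-u_1-u_2\ge v_1+v_2\ge 3-\sqrt2-u_1-u_2$. This requires $2\ge3-\sqrt2$, which is true, so these directions alone do not suffice. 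I need sharper directions.
\item The better tests are $(1,-1)$ and $(-1,1)$. There $x+y=0$, so $f(1,-1)=\sqrt2+1$, and this gives $v_1-v_2\le\sqrt2+1-(u_1-u_2)$, together with the symmetric bound. These are again weak.
\end{itemize}
So instead I would look for the directional derivative function $f'(0;d)=f(d)$ failing to majorize any linear function. The cleanest route is to show that $f(d)+f(-d)<0$ for some $d$, because then $\inner{v}{d}+\inner{v}{-d}=0>f_u(d)+f_u(-d)=f(d)+f(-d)$, a contradiction (the tilt cancels). Take $d=e$: $f(e)=\sqrt2-1$ and $f(-e)=\sqrt2-3$, so the sum is $2\sqrt2-4<0$. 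This gives $\pf f_u(0)=\varnothing$ for every $u$, and the tilt is irrelevant.

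\textbf{Excluding limiting criticality.} A limiting subgradient at $0$ is a limit of Fr\'echet subgradients at points $z_j\to0$. Since $\pf f_u(0)=\varnothing$, we may take $z_j\ne0$. At each such point $\pf f_u(z_j)\subseteq\pc f_u(z_j)$, and Lemma~\ref{p:lem:off-origin} gives norm at least $\delta-\norm{u}$. Closedness of the norm bound then gives $\norm{v}\ge\delta-\norm u>0$ for all $v\in\pl f_u(0)$, so $0\notin\pl f_u(0)$. Lemma~\ref{p:lem:clarke-criticality} gives $0\in\pc f_u(0)$.

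\textbf{Active manifold and nonminimality.} The singleton $M=\{0\}$ is a $0$-dimensional manifold, and $f_u|_M$ is constant, hence smooth with a local minimum. Lemma~\ref{p:lem:off-origin} supplies the uniform bound $\delta_0=\delta-\norm u$ on $B(0,\varepsilon)\setminus M$, so $0$ is sharply repulsive. For nonminimality, homogeneity gives
\[
f_u(-t,-t)=t\bigl(\sqrt2-3+u_1+u_2\bigr)<0 \qquad (t>0),
\]
since $\norm u<1/200$.

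The main obstacle is the Fr\'echet emptiness step, specifically finding the right pair of opposite directions. The symmetric-pair trick along $\pm e$ resolves it.
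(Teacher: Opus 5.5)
Your proof is correct and follows the paper's argument step for step. Homogeneity reduces the Fr\'echet condition to $\inner{v}{h}\le f_u(h)$ for all $h$, and the pair $\pm(1,1)$ gives $0\le f_u(1,1)+f_u(-1,-1)=2\sqrt2-4<0$; limiting criticality is excluded via Lemma~\ref{p:lem:off-origin}, and $M=\{0\}$ and nonminimality along $(-t,-t)$ are handled exactly as in the paper, so the exploratory tests along $a$, $b$, $-e$, $(1,-1)$, $(-1,1)$ can simply be dropped from a final write-up.
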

\begin{proof}
Clarke criticality follows from Lemma~\ref{p:lem:clarke-criticality}.
Suppose, for contradiction, that $p\in\partial_F f_u(0)$. Fix any nonzero $h\in\R^2$ and use increments $th$, $t\downarrow0$, in the definition. Since $f_u(0)=0$ and $f_u(th)=t f_u(h)$,
\[
\frac{f_u(th)-f_u(0)-\inner{p}{th}}{\norm{th}}
=\frac{t f_u(h)-t\inner{p}{h}}{t\norm{h}}
=\frac{f_u(h)-\inner{p}{h}}{\norm{h}}.
\]
The right-hand side is independent of $t$. Its limit must be nonnegative, so
\[
\inner{p}{h}\le f_u(h)\qquad\text{for every }h\in\R^2.
\]
The case $h=0$ is immediate. The passage from a local condition to all fixed directions is justified by positive homogeneity, not by convexity.

Take $h=(1,1)$ and $-h=(-1,-1)$. Direct evaluation gives
\[
f_u(h)=\sqrt2-1-u_1-u_2,
\qquad f_u(-h)=\sqrt2-3+u_1+u_2.
\]
Adding the two supposed lower-support inequalities yields
\begin{equation}
0=\inner{p}{h}+\inner{p}{-h}
\le f_u(h)+f_u(-h)=2\sqrt2-4<0,
\label{p:eq:frechet-contradiction}
\end{equation}
a contradiction. Therefore $\partial_F f_u(0)=\varnothing$.

If $0\in\partial_L f_u(0)$, there would be $x_j\to0$ and $p_j\in\partial_F f_u(x_j)$ with $p_j\to0$. No $x_j$ can equal $0$, because $\partial_F f_u(0)$ is empty. At every remaining point, the inclusion $\partial_F\subseteq\partial_C$ and Lemma~\ref{p:lem:off-origin} give
\[
\norm{p_j}\ge\delta-\norm{u}>0,
\]
contradicting $p_j\to0$. We have proved
\begin{equation}
0\in\partial_C f_u(0)\setminus\partial_L f_u(0).
\label{p:eq:repulsive}
\end{equation}
Thus $f_u$ is not subdifferentially regular at $0$. The singleton $M=\{0\}$ is a smooth active manifold by the uniform bound away from $0$, and $f_u|_M$ is constant. This verifies the stated \mbox{definition of sharp repulsiveness.}

Finally,
\[
f_u(-1,-1)=\sqrt2-3+u_1+u_2
\le\sqrt2-3+\sqrt2\norm{u}<0=f_u(0).
\]
Positive homogeneity gives $f_u(-t,-t)<0$ for every $t>0$. There are therefore points arbitrarily close to $0$ with strictly lower function values, so $0$ is not a local minimizer.
\end{proof}

This completes the geometric part: $0$ is a nonminimal sharply repulsive Clarke critical point. We next show that the stochastic iteration actually converges to it.

\subsection{Almost-sure convergence of stochastic subgradient descent to \texorpdfstring{$0$}{0}}\label{p:sec:convergence}

We first rescale the iterates by the step size. Positive homogeneity gives a recursion on two smooth cones, and an explicit two-cycle argument keeps the rescaled iterates in bounded neighborhoods of $a$ and $b$ for every allowed noise sequence. Returning to the original scale then forces convergence to $0$. Finally, the noise support condition turns this pathwise conclusion into almost-sure convergence.

\subsubsection{The rescaled iterates remain in two bounded regions}\label{p:sec:rescaling}\label{p:sec:cycle}

For the iteration \eqref{g:eq:algorithm} applied to $f_u$, define the rescaled iterates and successive step-size ratios by
\[
z_n=\frac{x_n}{\gamma_n},\qquad q_n=\frac{\gamma_n}{\gamma_{n+1}}.
\]
We introduce the open cones
\[
C_a=\{(x,y):x>y,\ x+y>0\},\qquad
C_b=\{(x,y):y>x,\ x+y>0\}.
\]
They are cones because multiplication by any positive scalar preserves their defining inequalities. On $C_a$, the minimum is $y$ and the negative-part term vanishes, so $f(x,y)=\rho-y$, where $\rho=\norm{z}$. Hence $\nabla f(z)=z/\rho-b$ and
\begin{equation}
T(z):=z-\nabla f(z)=b+(\rho-1)\frac z\rho
\qquad(z\in C_a).
\label{p:eq:Ta}
\end{equation}
Similarly, on $C_b$,
\begin{equation}
f(x,y)=\rho-x,\qquad
T(z)=a+(\rho-1)\frac z\rho.
\label{p:eq:Tb}
\end{equation}
Here $T$ is the unit-step gradient update map for the unperturbed objective on these regions; it is an auxiliary map, not the physical iteration with step $\gamma_n$. It is used below to give the two-cycle argument proving Lemma~\ref{p:lem:normalized-trapping}. Since $\norm{a}=\norm{b}=1$, we have $T(a)=b$ and $T(b)=a$. In fact, the unit-circle arc in $C_a$ maps to $b$, and the arc in $C_b$ maps to $a$. This explains the choice of the two points.

As long as $z_n$ lies in either cone, $x_n=\gamma_nz_n$ lies in that cone too. Formula~\eqref{p:eq:gradient} gives $\nabla f(\gamma_nz_n)=\nabla f(z_n)$, and $v_n=\nabla f(z_n)-u$. Dividing \eqref{g:eq:algorithm} by $\gamma_{n+1}$ therefore gives
\begin{equation}
z_{n+1}=q_n\{T(z_n)+u+\eta_{n+1}\},
\qquad 1\le q_n=1+\frac1{n+100}\le\frac{101}{100},
\qquad \norm{u+\eta_{n+1}}\le\frac1{100}.
\label{p:eq:normalized}
\end{equation}
The noise in this rescaled recursion has fixed bounded size. The small physical noise $\gamma_n\eta_{n+1}$ must not be treated as a vanishing perturbation after rescaling.

The statement is given in Lemma~\ref{p:lem:normalized-trapping}.
\begin{proof}[Proof of Lemma~\ref{p:lem:normalized-trapping}]
We prove \eqref{p:eq:invariant} by induction. The induction also justifies the smooth formulas at each successive physical iterate; smoothness of the whole future path is not assumed in advance.

Set $r=1/8$ and $R=3/20$. If $z=a+h$ with $\norm{h}\le r$, then
\[
z_1-z_2\ge1-\sqrt2r>0,\qquad
z_1+z_2\ge1-\sqrt2r>0.
\]
Thus $\overline B(a,r)\subset C_a$. The same calculation with $b$ and $R$ shows that $\overline B(b,R)\subset C_b$. Both balls avoid $0$, since their radii are less than $1$. Positive rescaling keeps them inside the same smooth cones.

Suppose $z\in\overline B(a,r)$, $r=1/8$. Set $\rho=\norm{z}$, $s=\rho-1$, and $w=z/\rho$. The reverse triangle inequality gives $|s|\le r$ and $\rho\ge1-r$. Since the second coordinate of $a$ is $0$,
\[
|w_2|=\frac{|z_2|}{\rho}\le\frac r{1-r}=\frac17,
\qquad |s w_2|\le\frac1{56}.
\]
Consider any update $z'=q(T(z)+\xi)$ with $1\le q\le101/100$ and $\norm{\xi}\le1/100$. Here $\xi$ represents the combined perturbation $u+\eta$, and the prime labels the next point; it is not a derivative. Put $e'=z'-b$. From $T(z)=b+sw$,
\[
e'=(q-1)b+qsw+q\xi.
\]
Taking norms gives
\begin{equation}
\norm{e'}\le\frac1{100}
+\frac{101}{100}\left(\frac18+\frac1{100}\right)
=\frac{2927}{20000}<\frac3{20}=:R.
\label{p:eq:first-distance}
\end{equation}
Taking the second coordinate instead gives the stronger componentwise estimate
\begin{equation}
|e'_2|\le\frac1{100}
+\frac{101}{100}\left(\frac1{56}+\frac1{100}\right)
<\frac1{25}.
\label{p:eq:first-coordinate}
\end{equation}
This additional information is important. An arbitrary point in $B(b,R)$ need not satisfy \eqref{p:eq:first-coordinate}; an odd iterate produced by the previous phase does.

The next update map is controlled by the radial error $|\norm{z'}-1|$, the distance of $z'$ from the unit circle. This is different from $\norm{z'-b}$, the distance from the single point $b$. The second coordinate estimate improves the radial error because $b=(0,1)$ is the radial direction at that point.

To estimate the radial error, let $\psi(z)=\norm{z}$. For $z\ne0$,
\[
\nabla\psi(z)=\frac z{\norm{z}},\qquad
\nabla^2\psi(z)=\frac1{\norm{z}}I
-\frac1{\norm{z}^3}zz^\top
=\frac1{\norm{z}}(I-ww^\top),\quad w=\frac z{\norm{z}}.
\]
Here $zz^\top$ is the outer product, not the scalar $z^\top z$. The matrix $I-ww^\top$ is the orthogonal projection onto the line perpendicular to $w$. Its eigenvalues in $\R^2$ are $0$ and $1$, so
\[
\norm{\nabla^2\psi(z)}_{\op}=\frac1{\norm{z}}.
\]
This identity is a property of the Euclidean norm in dimension two, not the definition of \mbox{a Hessian operator norm.}

Every point $b+t e'$, $0\le t\le1$, satisfies
\[
\norm{b+t e'}\ge\norm{b}-t\norm{e'}\ge1-R>0.
\]
Thus $\psi$ is twice continuously differentiable along the segment, and its Hessian operator norm there is at most $1/(1-R)$.

To apply one-variable calculus, set $g(t)=\psi(b+t e')$. The vector $e'$ is fixed while $t$ varies. The multivariable chain rule yields
\[
g'(t)=\sum_{i=1}^2\frac{\partial\psi}{\partial z_i}(b+t e')e'_i
=\inner{\nabla\psi(b+t e')}{e'},
\]
and differentiating once more gives
\[
g''(t)=\sum_{i,j=1}^2 e'_i
\frac{\partial^2\psi}{\partial z_i\partial z_j}(b+t e')e'_j
=(e')^\top\nabla^2\psi(b+t e')e'.
\]
The fundamental theorem of calculus, applied twice, gives the integral remainder:
\[
\begin{aligned}
g(1)-g(0)-g'(0)
&=\int_0^1\bigl(g'(t)-g'(0)\bigr)\,dt\\
&=\int_0^1\int_0^t g''(s)\,ds\,dt
=\int_0^1(1-s)g''(s)\,ds.
\end{aligned}
\]
Since $\psi(b)=1$ and $\nabla\psi(b)=b=(0,1)$, the first-order term is $g'(0)=\inner{b}{e'}=e'_2$. For any matrix $A$, Cauchy--Schwarz and the definition of the operator norm give
\[
|(e')^\top A e'|\le\norm{e'}\norm{Ae'}
\le\norm{A}_{\op}\norm{e'}^2.
\]
Consequently,
\[
\begin{aligned}
\left|\norm{b+e'}-1-e'_2\right|
&\le\int_0^1(1-t)
\left|(e')^\top\nabla^2\psi(b+t e')e'\right|\,dt\\
&\le\frac{\norm{e'}^2}{1-R}\int_0^1(1-t)\,dt
=\frac{\norm{e'}^2}{2(1-R)}.
\end{aligned}
\]
The factor $1/2$ comes from the integral, not from an additional matrix estimate. Combining the last bound with \eqref{p:eq:first-distance}--\eqref{p:eq:first-coordinate} proves
\begin{equation}
\bigl|\norm{z'}-1\bigr|
\le |e'_2|+\frac{\norm{e'}^2}{2(1-R)}
<\frac1{25}+\frac{(3/20)^2}{2(1-3/20)}
=\frac1{25}+\frac9{680}<\frac3{50}.
\label{p:eq:radial}
\end{equation}

By \eqref{p:eq:first-distance}, $z'\in C_b$. Formula~\eqref{p:eq:Tb} gives
\[
\norm{T(z')-a}
=\left\|(\norm{z'}-1)\frac{z'}{\norm{z'}}\right\|
=\bigl|\norm{z'}-1\bigr|<\frac3{50}.
\]
Allow a new factor $\widetilde q\in[1,101/100]$ and a new perturbation $\widetilde\xi$ with $\norm{\widetilde\xi}\le1/100$. They need not equal those in the preceding step. For $z''=\widetilde q(T(z')+\widetilde\xi)$,
\begin{equation}
\norm{z''-a}
\le\frac{101}{100}\left(\frac3{50}+\frac1{100}\right)+\frac1{100}
=\frac{807}{10000}<\frac18.
\label{p:eq:return}
\end{equation}
The base case $z_0=a$ lies in $\overline B(a,r)$. The estimates show that each even iterate generates an odd iterate satisfying both \eqref{p:eq:first-distance} and \eqref{p:eq:first-coordinate}, followed by an even iterate in $B(a,r)$. This proves \eqref{p:eq:invariant}. The same induction starts from any $z_0\in\overline B(a,r)$.

We have not asserted that the two entire balls map into each other in one step. The return estimate uses the extra coordinate information carried by the odd iterate. Finally,
\[
\norm{z_{2j}}\le1+\frac18<\frac{23}{20},\qquad
\norm{z_{2j+1}}<1+\frac3{20}=\frac{23}{20}.
\]
The cone inclusions and positive scaling give the claimed smoothness and uniqueness.
\end{proof}

\begin{remark}\label{p:rem:tangent-maps}

The tangent maps explain why a two-cycle argument works. Using the Hessian formula for $\psi(z)=\norm{z}$ derived in the preceding proof, we have $DT(z)=I-\nabla^2\psi(z)$ on either smooth cone. In particular, on the unit circle, $DT(z)=zz^\top$. Thus
\begin{equation}
DT(a)=\begin{pmatrix}1&0\\0&0\end{pmatrix},\qquad
DT(b)=\begin{pmatrix}0&0\\0&1\end{pmatrix},\qquad
D(T\circ T)(a)=DT(b)DT(a)=0.
\label{p:eq:derivatives}
\end{equation}
The tangent maps $DT(a)$ and $DT(b)$ describe the first-order transport of a small displacement, not its exact evolution. At $a$, only the horizontal component survives; at $b$, only the vertical component survives. Hence a displacement passes through the linearized updates as
\[
(h_1,h_2)\xmapsto{\ DT(a)\ }(h_1,0)
\xmapsto{\ DT(b)\ }(0,0).
\]
Since $T(a)=b$, $T(b)=a$, and $T\circ T$ is $C^2$ near $a$, Taylor's formula and \eqref{p:eq:derivatives} give
\[
T(T(a+h))-a=O(\norm{h}^2)\qquad(h\to0).
\]
Thus the first-order error cancels, but a second-order remainder may remain. This statement concerns the auxiliary unperturbed unit-step map, not the actual rescaled recursion with $q_n$, $u$, and $\eta_{n+1}$. It explains the two-cycle argument; the explicit estimates above establish trapping for \mbox{that noisy, time-varying recursion.}

\end{remark}


Thus the rescaled iterates remain bounded for every noise path in the prescribed ball. We now transfer this deterministic control to the original stochastic iteration.

\subsubsection{Returning to the original scale and almost-sure convergence}\label{p:sec:quantifiers}

The vanishing step size turns bounded rescaled iterates into convergence to $0$. The next lemma also verifies the moment and excitation properties of the uniform noise.

\begin{lemma}[Uniform ball noise and almost-sure convergence]\label{p:lem:almost-sure-convergence}
Under the setting of Theorem~\ref{p:thm:main}, the noise is conditionally centered with $\Cov(\eta_{n+1}\mid\mathcal F_n)=I_2/160000$, and satisfies
\begin{equation}
\begin{aligned}
\sup_{n\ge0}\E[\norm{\eta_{n+1}}^4\mid\mathcal F_n]&\le(1/200)^4,\\
\inf_{\norm{h}=1}\E[|\inner{h}{\eta_{n+1}}|\mid\mathcal F_n]&=\frac4{600\pi}>0
\qquad(n\ge0).
\end{aligned}
\label{p:eq:noise-conditions}
\end{equation}
Moreover, \eqref{p:eq:decay} holds with probability $1$. In particular, $\PP(\lim_{n\to\infty}x_n=0)=1$ for every fixed admissible tilt and initial point, and for an independent random initialization supported \mbox{in the stated region.}
\end{lemma}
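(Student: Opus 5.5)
The plan has two independent parts: first compute the noise moments from the uniform law, then get almost-sure convergence from the pathwise bound of Lemma~\ref{p:lem:normalized-trapping}.

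\textbf{Noise moments.} Since $\eta_{n+1}$ is independent of $\mathcal F_n=\sigma(x_0,\eta_1,\ldots,\eta_n)$, every conditional quantity in \eqref{p:eq:noise-conditions} equals the corresponding unconditional quantity for $\eta\sim\Unif(B(0,r))$ with $r=1/200$.
\begin{itemize}
\item Centering follows from the symmetry $\eta\mapsto-\eta$.
\item For the covariance, use polar coordinates: $\E\norm{\eta}^2=\int_0^r\rho^2\cdot 2\rho\,d\rho/r^2=r^2/2$. Rotation invariance then gives $\Cov(\eta)=(r^2/4)I_2=I_2/160000$.
\item The fourth-moment bound is immediate from $\norm{\eta}\le r$.
\item For excitation, rotation invariance reduces to $h=e_1$. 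Then $\E|\eta_1|=\frac{1}{\pi r^2}\int_0^r\int_0^{2\pi}\rho|\cos\theta|\,\rho\,d\theta\,d\rho=\frac{4}{\pi r^2}\cdot\frac{r^3}{3}=\frac{4r}{3\pi}=\frac{4}{600\pi}$, which is constant over unit $h$, so the infimum equals it.
\end{itemize}

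\textbf{Almost-sure convergence.} I read \eqref{p:eq:decay} as the bound $\norm{x_n}\le 23\gamma_n/20$ for all $n$, which gives $x_n\to0$. Define the event $\Omega_0=\{x_0\in\gamma_0\overline B(a,1/8)\}\cap\bigcap_{n\ge1}\{\norm{\eta_n}\le1/200\}$.
\begin{itemize}
\item $\Omega_0$ has probability $1$: the initialization hypothesis holds almost surely, each $\eta_n$ lies in the closed ball with probability one, and a countable intersection of full-measure events has full measure.
\item On $\Omega_0$, fix $\omega$. Then $z_0=x_0/\gamma_0\in\overline B(a,1/8)$, and the realized noise sequence satisfies the hypotheses of Lemma~\ref{p:lem:normalized-trapping}, which holds for \emph{every} such deterministic sequence and every tilt with $\norm{u}<1/200$.
\item That lemma gives $\norm{z_n}\le23/20$, hence $\norm{x_n}=\gamma_n\norm{z_n}\le 23/(20(n+100))\to0$.
\end{itemize}

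\textbf{Main point requiring care.} The lemma must apply for an arbitrary adapted Clarke selection. Lemma~\ref{p:lem:normalized-trapping} already shows, inductively, that each $x_n$ lies in an open smooth cone. There $\pc f_u(x_n)=\{\nabla f(x_n)-u\}$, so every selection is forced to be the gradient, and the trajectory on $\Omega_0$ is the deterministic one analyzed in the lemma, with no dependence on extra randomness in the selection.

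For a random initialization, the argument above is already pathwise in $x_0$, so no further work is needed. Alternatively, condition on $x_0$ and use its independence from the noise via Fubini. The claim for a fixed initial point is the point-mass case.
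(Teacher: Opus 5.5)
Your proposal is correct and follows the paper's proof. The conditional moments reduce by independence to those of the uniform disk law, which you compute by symmetry and polar coordinates as the paper does; almost-sure convergence then follows by applying Lemma~\ref{p:lem:normalized-trapping} pathwise on the full-measure event $\bigcap_{n\ge1}\{\norm{\eta_n}\le1/200\}$, where the smooth-cone inclusions force every adapted Clarke selection to be the gradient (the paper treats a random $x_0$ through a product space rather than adding it to $\Omega_0$, which is only a cosmetic difference).
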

\begin{proof}
Since $x_n=\gamma_n z_n$, Lemma~\ref{p:lem:normalized-trapping} gives \eqref{p:eq:decay}. The cone inclusions show simultaneously that every physical iterate is nonzero and is a differentiability point. All estimates hold for every perturbation path in the specified support; no averaging or probabilistic cancellation was used.

Let $r_\eta=1/200$ and let $U_{r_\eta}$ denote the uniform probability measure on $\overline B(0,r_\eta)\subset\R^2$. For deterministic initialization, a concrete probability space is
\[
 (\Omega,\mathcal F,\PP)
 =\bigl((\R^2)^{\mathbb N},\,
        \mathcal B(\R^2)^{\otimes\mathbb N},\,
        U_{r_\eta}^{\otimes\mathbb N}\bigr).
\]
The variable $\eta_i$ is the $i$th coordinate; take $\mathcal F_n=\sigma(\eta_1,\ldots,\eta_n)$. By induction, $x_n$ and its uniquely determined subgradient are measurable with respect to this past. Independence and \mbox{symmetry of the noise give}
\[
\E[\eta_{n+1}\mid\mathcal F_n]=0.
\]
Thus the noise is a martingale difference sequence. Its density is $1/(\pi r_\eta^2)$ on the ball of radius $r_\eta=1/200$ centered at $0$, and $0$ outside. It is absolutely continuous, although its density is not positive everywhere in $\R^2$.

The single-vector moment calculations use
$(\R^2,\mathcal B(\R^2),U_{r_\eta})$ with coordinate
$\eta(z)=z$; write $\E_{U_{r_\eta}}$ for this expectation.
For $\eta=(\eta^{(1)},\eta^{(2)})$, symmetry gives mean $0$ and off-diagonal \mbox{covariance $0$. In polar coordinates,}
\[
\E_{U_{r_\eta}}[(\eta^{(1)})^2]
=\frac1{\pi r_\eta^2}\int_0^{r_\eta}\int_0^{2\pi}
\rho^2\cos^2\theta\,\rho\,d\theta\,d\rho
=\frac{r_\eta^2}{4}.
\]
The same holds for the second coordinate. Hence $\Cov(\eta)=r_\eta^2 I_2/4=I_2/160000$, giving the stated covariance. Also $\E_{U_{r_\eta}}\norm{\eta}^m\le r_\eta^m$ for every $m>0$, so all positive moments are bounded.

Every unit direction receives nontrivial noise. By rotational invariance, for any unit $h$,
\[
\E_{U_{r_\eta}}|\inner{h}{\eta}|
=\frac1{\pi r_\eta^2}\int_0^{r_\eta}\int_0^{2\pi}
\rho|\cos\theta|\,\rho\,d\theta\,d\rho
=\frac{4r_\eta}{3\pi}=\frac4{600\pi}>0.
\]
By symmetry, the expected positive part of this projection is half that number. Thus the noise is omnidirectional, with a common positive lower bound for the expected absolute projection over all unit directions. Independence makes these same identities and bounds conditional on $\mathcal F_n$, proving \eqref{p:eq:noise-conditions}.

The step sizes of Theorem~\ref{p:thm:main} are positive, decrease to $0$, and satisfy
\[
\sum_{n=0}^{\infty}\gamma_n=\infty,
\qquad \sum_{n=0}^{\infty}\gamma_n^2<\infty.
\]
With probability $1$, every noise vector lies in the closed ball $\overline B(0,1/200)$, because the intersection of these countably many probability-$1$ events still has probability $1$. On that event, the deterministic estimates of Section~\ref{p:sec:cycle} apply at all times. Therefore $x_n\to0$ almost surely. The stronger pathwise statement is that any prescribed sequence inside this ball gives the same trapping conclusion, irrespective of independence.

\[
\begin{gathered}
\Omega_0=\bigcap_{n=1}^{\infty}\{\norm{\eta_n}\le1/200\},
\qquad \PP(\Omega_0)=1,\\[0.3em]
\Omega_0\subseteq\{\lim_{n\to\infty}x_n=0\}
\quad\Longrightarrow\quad
\PP\!\left(\lim_{n\to\infty}x_n=0\right)=1.
\end{gathered}
\]

The bounds use $u$ only through $\norm{u}\le1/200$. Consequently, the same noise law and initial point work for every $u\in V:=B(0,1/200)$. Since $V$ is a nonempty open set, it has positive Lebesgue measure. The failure is not confined to a null set of tilt parameters.

Likewise, any initial point in the nonempty open set $U:=\gamma_0B(a,1/8)$ gives $z_0\in B(a,1/8)$ and starts the same induction; Lemma~\ref{p:lem:normalized-trapping} also includes its closure. If $x_0$ has distribution $\rho_0$, is independent of the entire noise sequence, and is supported in $U$ or its closure, use
$\Omega=\R^2\times(\R^2)^{\mathbb N}$ with the product Borel sigma-field and
$\PP=\rho_0\otimes U_{r_\eta}^{\otimes\mathbb N}$.
Take $\mathcal F_n=\sigma(x_0,\eta_1,\ldots,\eta_n)$. All conclusions then hold almost surely. More generally, an independent initialization assigning positive probability to $U$ has a failure probability at least $\PP(x_0\in U)$.
\end{proof}

\begin{proof}[Proof of Theorem~\ref{p:thm:main}]
Lemmas~\ref{p:lem:function-class} and~\ref{p:lem:sharp-repulsiveness} give the stated properties of $f_u$ and $0$. On the probability-$1$ event $\bigcap_{n\ge1}\{\norm{\eta_n}\le1/200\}$, Lemma~\ref{p:lem:normalized-trapping} gives smoothness along the trajectory and
\begin{equation}\label{p:eq:decay}
\norm{x_n}=\gamma_n\norm{z_n}\le\frac{23}{20(n+100)}\longrightarrow0.
\end{equation}
The estimate holds for every allowed initial point and noise path, so it also applies to an independent random initialization supported in the stated set. Lemma~\ref{p:lem:almost-sure-convergence} verifies the conditional centering, fourth-moment bound, and excitation of every direction.
\end{proof}

The same argument already covers
\[
\gamma_n=(n+100)^{-\alpha},\qquad
x_0=\gamma_0a,\qquad \alpha\in(1/2,1].
\]
Indeed,
\[
q_n=\left(1+\frac1{n+100}\right)^\alpha\in[1,101/100],
\]
which is the only step-ratio property used in the two-cycle argument. Now $\norm{x_n}\le(23/20)(n+100)^{-\alpha}$. The two series conditions also hold. In particular, the counterexample does not rely on the endpoint $\alpha=1$ and also applies to the usual stochastic-approximation step range $(1/2,1)$.

\subsection{A coercive counterexample preserving convergence to \mbox{the nonminimal point \texorpdfstring{$0$}{0}}}\label{p:sec:coercive}

The following modification makes the objective coercive without changing the local geometry at $0$ or any of the trapped trajectories. Define
\begin{equation}
\widehat f_u(z)=f_u(z)+10\max\{0,\norm{z}-1/10\}.
\label{p:eq:coercive}
\end{equation}

The statement is given in Corollary~\ref{p:cor:coercive}.

\begin{proof}
The radial term vanishes throughout the open ball $B(0,1/10)$. From \eqref{p:eq:decay},
\[
\norm{x_n}\le\frac{23}{2000}<\frac1{10}.
\]
Thus it vanishes on a neighborhood of each trapped iterate and of $0$. The two objectives have identical local values and subdifferentials there. Starting from the same initial point and using the same noise, their recursions coincide inductively along the whole trapped trajectory. The active-manifold property, \eqref{p:eq:repulsive}, and nonminimality at $0$ are all unchanged.

To prove coercivity, note that $\min\{z_1,z_2\}\le\norm{z}$, so $\norm{z}-\min\{z_1,z_2\}\ge0$. Also,
\[
\max\{0,-z_1-z_2\}\le|z_1+z_2|\le\sqrt2\norm{z},
\qquad -\inner{u}{z}\ge-\norm{u}\norm{z}.
\]
Consequently $f_u(z)\ge-(2\sqrt2+\norm{u})\norm{z}$. Since $\max\{0,\norm{z}-1/10\}\ge\norm{z}-1/10$,
\[
\widehat f_u(z)\ge(10-2\sqrt2-\norm{u})\norm{z}-1
\longrightarrow+\infty\quad\text{as }\norm{z}\to\infty.
\]
The coefficient is positive for the stated tilts. Continuity and coercivity imply that a sufficiently low nonempty sublevel set is compact; minimizing over it gives an attained global minimum.

The radial term is globally $10$-Lipschitz and definable, since the norm and the maximum of two definable functions are definable. Thus the modified function retains both properties. To see that weak convexity still fails, fix a sufficiently small $r_0>0$ and consider $(r_0+t,-r_0+t)$ for small $|t|$. This entire short line segment lies in $B(0,1/10)$, where the radial modification is $0$. Its norm term has derivative $0$ at $t=0$, and the same downward derivative jump from $3$ to $-1$ persists, up to the common derivative of the linear tilt. The argument of Section~\ref{p:sec:properties} therefore \mbox{still excludes weak convexity.}
\end{proof}

For $\alpha\in(1/2,1)$, use $\gamma_n=(n+1000)^{-\alpha}$ and $x_0\in\gamma_0\overline B(a,1/8)$. Then $q_n\le101/100$ and
\[
\norm{x_n}\le\frac{23}{20}\gamma_n
\le\frac{23}{20}\,1000^{-\alpha}
<\frac{23}{20\sqrt{1000}}<\frac1{10}.
\]
The same coercive argument applies. Thus the attained-global-minimum version is compatible with the full open interval of exponents used in the related stochastic subgradient setting.

\section{Proofs for Gaussian noise avoidance}\label{g:sec:proofs}\label{g:sec:descent-proofs}
This appendix supplies the proofs for Section~\ref{sec:gaussian}.
We first establish the difference quotient construction and the
continuity of its state transition, including the measurability and
stability arguments. Translation orbits and fixed-step escape then
lead to positive expected descent. The convergence windows and their
stopped identities lead to expected descent $0$. We finish with the
finite-set and generic-tilt corollary.
\subsection{Properties of the difference quotient functions}\label{g:sec:profiles}\label{g:sec:transition}
We prove compactness of the difference quotient family and transfer the local subgradient bound to every limiting function on all of $\R^d$. Local gradient identification connects these functions to the original stochastic iteration; the continuity of their noisy transition is established in Section~\ref{g:sec:profiletools}.
\begin{lemma}[The local Fr\'echet subgradient bound]\label{g:lem:localgap}
There are a definable open neighborhood $U$ of $0$ and $a>0$ such that
\begin{equation}\label{g:eq:localgap}
 \norm{v}\ge a\qquad(x\in U,\ v\in\pf f(x)).
\end{equation}
\end{lemma}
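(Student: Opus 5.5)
The plan is a contradiction argument built on the closed-graph property of the limiting subdifferential, followed by a definable choice of $U$. The set in question is
\[
 \Lambda=\{x\in\R^d:\ \exists\,v\in\pf f(x),\ \norm{v}<a\}.
\]
We want a ball around $0$ that avoids $\Lambda$ for some $a>0$.

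First we show that there are $a>0$ and $r>0$ such that every $x\in B(0,r)$ and every $v\in\pf f(x)$ satisfy $\norm{v}\ge a$. Suppose not. Then for each $j$ there are $x_j$ with $\norm{x_j}<1/j$ and $v_j\in\pf f(x_j)$ with $\norm{v_j}<1/j$. Hence $x_j\to0$ and $v_j\to0$. By the definition of the limiting subdifferential, as limits of Fr\'echet subgradients at points tending to $0$, this gives $0\in\pl f(0)$. That contradicts the hypothesis $0\notin\pl f(x_*)$ after the translation $x_*\mapsto0$, which changes neither this hypothesis nor Fr\'echet subgradients up to the shift.

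No boundedness argument is needed here. The sequence $v_j$ converges to $0$ by construction, so local Lipschitz continuity is not even required to extract a subsequence.

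Next we make the neighborhood definable. We take $U=B(0,r)$, an open Euclidean ball. Its defining inequality $\sum_i x_i^2<r^2$ is polynomial, so $U$ is semialgebraic and hence definable, since $r$ is allowed as a real parameter in the fixed o-minimal structure. With $a$ as above, \eqref{g:eq:localgap} holds on $U$.

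For the later use of $R$ in \eqref{g:eq:intro-gap} and \eqref{g:eq:extension}, any $R<r$ then gives the bound on the closed ball $\overline B(0,R)\subset U$.

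The only potential obstacle is conceptual rather than technical. We must apply the limiting subdifferential exactly as defined, namely as limits of Fr\'echet subgradients at nearby points including the point itself, and not confuse it with the Clarke subdifferential. The Clarke subdifferential may contain $0$, and the lemma deliberately bounds only Fr\'echet subgradients. Once this is respected, the contradiction is immediate.
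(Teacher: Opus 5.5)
Your proof is correct and matches the paper's argument. The paper likewise picks $x_j\in B(0,1/j)$ and $v_j\in\pf f(x_j)$ with $\norm{v_j}<1/j$, and notes $(x_j,f(x_j),v_j)\to(0,f(0),0)$, where function-value convergence is automatic by continuity; it then concludes $0\in\pl f(0)$ and takes a sufficiently small open ball as $U$.
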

\begin{proof}
If the assertion failed, for every $j\ge1$ we could choose
\[
 x_j\in B(0,1/j),\qquad v_j\in\pf f(x_j),\qquad \norm{v_j}<1/j.
\]
Then $(x_j,f(x_j),v_j)\to(0,f(0),0)$, so the definition of the
limiting subdifferential gives $0\in\pl f(0)$, a contradiction.
A sufficiently small open ball may be used for $U$.
\end{proof}
Choose $R>0$ so that $\overline B(0,R)\subset U$ and $f$ is
$L$-Lipschitz on this ball. These are the constants used in
Section~\ref{g:sec:rescaled}, and \eqref{g:eq:localgap} gives
\eqref{g:eq:intro-gap}. Fix $0<r<R$, set $K_0=\overline B(0,r)$,
and define
\[
 g(x)=\begin{cases}\nabla h(x),&h\text{ is differentiable at }x,\\
                    0,&\text{otherwise}.
       \end{cases}
\]
\begin{wrapfigure}{r}{0.34\textwidth}
\centering
\begin{tikzpicture}[scale=.74,font=\small]
 \fill[blue!6] (-2.05,-1.9) rectangle (2.05,1.9);
 \filldraw[fill=blue!13,draw=blue!65!black] (0,0) circle (1.38);
 \filldraw[fill=blue!23,draw=blue!65!black,dashed] (0,0) circle (.75);
 \fill (0,0) circle (.035);
 \node[above right] at (0,0) {$0$};
 \node at (0,-.42) {$K_0$};
 \node at (0,1.05) {$h=f$};
 \node at (0,1.65) {$h$ globally $L$-Lipschitz};
 \node at (0,-1.68) {$\overline B(0,R)$};
\end{tikzpicture}
\caption{The extension agrees with $f$ around $K_0$.}\label{g:fig:extension}
\end{wrapfigure}
The extension supplies a global Lipschitz bound while preserving $f$ locally, as Figure~\ref{g:fig:extension} illustrates. We next identify the actual selections with its gradient near the target.
\begin{lemma}[Local extension and gradient identification]\label{g:lem:extension}
The extension $h$ in \eqref{g:eq:extension} is definable and globally
$L$-Lipschitz, and $h=f$ on an open neighborhood of $K_0$.
Its gradient representative $g$, defined above,
is bounded, definable, and Borel. Almost surely, for every $n\ge1$,
\begin{equation}\label{g:eq:localiteration}
 X_n\in K_0\quad\Longrightarrow\quad V_n=g(X_n).
\end{equation}
The same assertion holds after conditioning on any positive-probability
event in a past sigma-field.
\end{lemma}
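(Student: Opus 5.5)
The plan has three parts. First, the regularity facts about $h$ and $g$ follow from the preliminaries. Second, we show that every iterate $X_n$ with $n\ge1$ lands, almost surely, in the open set where $f$ is $C^1$. Third, we identify $V_n$ with $g(X_n)$ on $K_0$ using that property.

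\textbf{Properties of $h$ and $g$.} Definability and the global $L$-Lipschitz bound come from Lemma~\ref{g:fact:extension}, applied with $A=\overline B(0,R)$ and $F=f|_A$. Both are definable, and $f$ is $L$-Lipschitz on $A$ by the choice of $R$.

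Next we check that $h=f$ near $K_0$. Lemma~\ref{g:fact:extension} gives $h=f$ on $\overline B(0,R)$, and $B(0,R)$ is an open neighborhood of $K_0=\overline B(0,r)$ because $r<R$.

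The set of differentiability points of $h$ is definable, since it is expressible by first-order formulas with real quantifiers. The gradient on that set is also definable. Hence $g$ is definable, and it is Borel, because definable sets in an o-minimal structure are finite unions of cells and so Borel. Finally, $\norm{g}\le L$, because gradients of an $L$-Lipschitz function have norm at most $L$.

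\textbf{Iterates avoid the non-$C^1$ set.} By the preliminaries, there is a definable open set $O$ with $f|_O\in C^1$, $\mathcal L^d(\R^d\setminus O)=0$, and $\pc f(x)=\{\nabla f(x)\}$ for $x\in O$. Fix $n\ge0$ and condition on $\mathcal F_n$. Then $X_n$ and $V_n$ are $\mathcal F_n$-measurable, and $\eta_{n+1}$ is independent of $\mathcal F_n$ with a positive-definite Gaussian law. Therefore $X_{n+1}=X_n-\gamma_nV_n+\gamma_n\eta_{n+1}$ has, conditionally on $\mathcal F_n$, a Gaussian law centered at $X_n-\gamma_nV_n$ with covariance $\gamma_n^2\Sigma$, which is absolutely continuous.

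It follows that
\[
\PP(X_{n+1}\notin O\mid\mathcal F_n)=0\quad\text{a.s.}
\]
Taking expectations and a countable union over $n$ gives $X_n\in O$ for all $n\ge1$ almost surely.

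The conditional version of the claim uses the same computation. Let $B$ be a positive-probability event in $\mathcal F_m$. For $n\ge m$, independence of $\eta_{n+1}$ from $\mathcal F_n\supseteq\mathcal F_m$ is preserved under $\PP(\cdot\mid B)$, so the conditional law of $X_{n+1}$ is still Gaussian. For $n<m$ the claim follows directly, because a $\PP$-null event is also $\PP(\cdot\mid B)$-null.

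\textbf{Identification on $K_0$.} Take $n\ge1$ with $X_n\in K_0\cap O$. On the open set $B(0,R)\cap O$ the functions $h$ and $f$ coincide, and $f$ is $C^1$ there. So $h$ is differentiable at $X_n$ with $\nabla h(X_n)=\nabla f(X_n)$, which gives $g(X_n)=\nabla f(X_n)$. Since $\pc f(X_n)=\{\nabla f(X_n)\}$ and $V_n\in\pc f(X_n)$, we conclude $V_n=g(X_n)$.

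\textbf{Main obstacle.} The only delicate step is the measure-theoretic one. The selection $V_n$ may be an arbitrary adapted choice, so we must condition on $\mathcal F_n$ and use the independence of $\eta_{n+1}$ from $\mathcal F_n$, rather than any independence of $X_n$ itself. This conditioning is what makes the law of $X_{n+1}$ absolutely continuous and lets it miss the null set $\R^d\setminus O$.
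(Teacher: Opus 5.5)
Your proposal is correct and follows the paper's proof essentially step for step. It uses McShane--Whitney for $h$, the conditionally nondegenerate Gaussian law of $X_n$ given $\mathcal F_{n-1}$ to place every iterate with $n\ge1$ in a full-measure open $C^1$ set, uniqueness of the Clarke subgradient there, and preservation of null events under conditioning on $B$. The only difference is cosmetic: you take $O$ to be the $C^1$ set of $f$ rather than of $h$, which is immaterial because $h=f$ on the open ball $B(0,R)\supset K_0$.
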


\begin{proof}
The extension formula is Lemma~\ref{g:fact:extension}. For
$x\in\overline B(0,R)$, the Lipschitz inequality gives $h(x)\ge f(x)$,
while choosing $y=x$ gives the reverse inequality. Hence $h=f$ on
$B(0,R)$, an open neighborhood of $K_0$, and $\norm{g}\le L$.
The infimum and the derivative limits are definable, so $h$ and $g$
are definable and $g$ is Borel.

Choose a definable open set $O$ on which $h$ is $C^1$, with
$\mathcal L^d(O^c)=0$, using the regularity recalled in
Section~\ref{g:sec:prelim}. For $n\ge1$,
\[
 \mathcal L(X_n\mid\mathcal F_{n-1})
 =N(X_{n-1}-\gamma_{n-1}V_{n-1},\gamma_{n-1}^2\Sigma).
\]
Its nondegenerate density implies $X_n\in O$ for every $n\ge1$
almost surely. Consequently, on $\{X_n\in K_0\}$,
\[
 V_n\in\pc f(X_n)=\pc h(X_n)=\{\nabla h(X_n)\}=\{g(X_n)\}.
\]
Conditioning on a positive-probability event $B\in\mathcal F_m$
preserves every null event. Independence also preserves the joint law
of the noise vectors after time $m$ under this conditioning.
\end{proof}

For later use, \eqref{g:eq:profile} and \eqref{g:eq:translation} give
\begin{equation}\label{g:eq:profileidentity}
 T_zH_{x,s}=H_{x+sz,s},\qquad
 H_{x,s}(z)=\frac{h(x+sz)-h(x)}s.
\end{equation}
Introduce the space containing both actual difference quotient functions and their limits,
\[
 \mathcal D=\overline{\{H_{x,s}:x\in\R^d,\ s>0\}},
\]
where the closure is taken for uniform convergence on compact sets.
Fix a compatible metric $d_0$ for this topology.
\begin{lemma}[Compactness and recentering]\label{g:lem:profilecompactness}
The spaces $\mathcal D$ and $\mathcal C\subset\mathcal D$ are nonempty
and compact. Their members vanish at $0$ and are $L$-Lipschitz.
The map $(H,z)\mapsto T_zH$ is continuous, and for every $z\in\R^d$,
\[
 H\in\mathcal D\ \Longrightarrow\ T_zH\in\mathcal D,
 \qquad H\in\mathcal C\ \Longrightarrow\ T_zH\in\mathcal C.
\]
Every sequence $H_{x_j,s_j}$ with $x_j\to0$ and $s_j\to0$, $s_j>0$,
has a subsequence converging locally uniformly to a member of
$\mathcal C$.
\end{lemma}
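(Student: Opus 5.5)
The plan is to apply Arzelà--Ascoli to the family of difference quotients, which is uniformly Lipschitz and pinned at $0$. First, by Lemma~\ref{g:fact:extension} the extension $h$ is globally $L$-Lipschitz. Hence every $H_{x,s}$ satisfies
\[
|H_{x,s}(u)-H_{x,s}(u')|=\frac{|h(x+su)-h(x+su')|}{s}\le L\norm{u-u'}
\]
together with $H_{x,s}(0)=0$.

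Let $\mathcal G$ denote the set of all $L$-Lipschitz functions vanishing at $0$, with the topology of locally uniform convergence. On each ball $\overline B(0,M)$ this family is equicontinuous and bounded by $LM$. Arzelà--Ascoli on each ball, followed by a diagonal argument over $M\in\mathbb N$, shows that $\mathcal G$ is compact. This topology is metrizable, for instance by $d_0(F,G)=\sum_M 2^{-M}\min\{1,\sup_{\norm{u}\le M}|F-G|\}$. Both the Lipschitz bound and vanishing at $0$ survive locally uniform limits, so $\mathcal G$ is closed. Therefore $\mathcal D$ is a closed subset of a compact metric space, hence compact, and its members vanish at $0$ and are $L$-Lipschitz.

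For $\mathcal C$, I use a sequential description: $H\in\mathcal C$ iff $H_{x_j,s_j}\to H$ for some $x_j\to0$ and $s_j\downarrow0$. Nonemptiness follows by taking $x_j=0$, $s_j=1/j$ and extracting a convergent subsequence inside the compact $\mathcal G$; this subsequence statement is exactly the last assertion of the lemma, since the limit lies in $\mathcal C$ by definition.

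For closedness of $\mathcal C$, take $H^{(k)}\in\mathcal C$ with $H^{(k)}\to H$. For each $k$, pick an actual quotient $H_{x_k,s_k}$ with $d_0(H_{x_k,s_k},H^{(k)})<1/k$, $\norm{x_k}<1/k$ and $s_k<1/k$; this is possible because each $H^{(k)}$ is such a limit. A diagonal argument then gives $H_{x_k,s_k}\to H$, so $H\in\mathcal C$. The main technical point here is that the definition of $\mathcal C$ is sequential, and this diagonalization is what makes it closed. Hence $\mathcal C$ is compact and $\mathcal C\subset\mathcal D$.

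Continuity of $(H,z)\mapsto T_zH$ is a direct estimate. If $H_k\to H$ locally uniformly and $z_k\to z$, then for $\norm{u}\le M$,
\[
|T_{z_k}H_k(u)-T_zH(u)|\le|H_k(u+z_k)-H(u+z_k)|+|H(u+z_k)-H(u+z)|+|H_k(z_k)-H(z)|.
\]
The first and third terms are controlled by uniform convergence on $\overline B(0,M+\sup_k\norm{z_k})$ plus the Lipschitz bound, and the second is at most $L\norm{z_k-z}$.

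For invariance, identity \eqref{g:eq:profileidentity} gives $T_zH_{x,s}=H_{x+sz,s}$, so $T_z$ maps actual quotients to actual quotients. Continuity of $T_z$ then transfers this to the closure, giving $T_z\mathcal D\subset\mathcal D$. If $H_{x_j,s_j}\to H\in\mathcal C$, then $T_zH_{x_j,s_j}=H_{x_j+s_jz,s_j}$. Its centers $x_j+s_jz\to0$ and its scales $s_j\to0$, and by continuity it converges to $T_zH$, so $T_zH\in\mathcal C$.
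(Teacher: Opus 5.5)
Your proof is correct and follows essentially the paper's route: Arzel\`a--Ascoli with a diagonal argument for compactness of $\mathcal D$, the identity $T_zH_{x,s}=H_{x+sz,s}$ together with continuity of translation for both invariance claims, and subsequence extraction for the last assertion (with a further subsequence to make the scales decreasing, as the paper notes). The only cosmetic difference is in how $\mathcal C$ is handled: the paper gets nonemptiness and compactness from the nested representation $\mathcal C=\bigcap_{j\ge1}\overline{\{H_{x,s}:\norm{x}\le1/j,\ 0<s\le1/j\}}$, whereas you prove closedness directly by the same diagonal approximation that the paper uses to justify that representation.
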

\begin{proof}
The normalization and Lipschitz bound follow directly from
\eqref{g:eq:profile}. Arzel\`a--Ascoli, followed by a
diagonal argument over balls, makes their local-uniform closure
$\mathcal D$ a compact metric space. The sequence definition
\eqref{g:eq:profilelimit} is equivalently
\[
 \mathcal C=\bigcap_{j\ge1}
 \overline{\{H_{x,s}:\norm{x}\le1/j,\ 0<s\le1/j\}},
\]
with closures taken in $\mathcal D$. Each set in this decreasing
intersection is nonempty and compact, so $\mathcal C$ is nonempty
and compact. The equivalence follows by choosing an approximating
difference quotient function from the $j$th set with distance less than $1/j$ in a metric
for local-uniform convergence.

The map $(H,z)\mapsto T_zH$ is continuous: local-uniform convergence
of $H_j$ and convergence of $z_j$ imply local-uniform convergence of
$T_{z_j}H_j$. Also $T_zT_w=T_{z+w}$. For actual difference quotient functions,
\eqref{g:eq:profileidentity} gives invariance of $\mathcal D$ under
recentering. If $H_{x_j,s_j}\to H\in\mathcal C$ with vanishing
centers and scales, then
\[
 T_zH_{x_j,s_j}=H_{x_j+s_jz,s_j}\longrightarrow T_zH,
 \qquad x_j+s_jz\to0.
\]
Thus $T_zH\in\mathcal C$ as well. Finally, for any $x_j\to0$ and
$s_j\to0$ with $s_j>0$, compactness of $\mathcal D$ gives a
convergent subsequence of $H_{x_j,s_j}$. Its limit belongs to
$\mathcal C$ by definition; the scales may be made decreasing by a
further subsequence.

\end{proof}

We now prove Lemma~\ref{g:lem:profilegap}, which gives the exact
subgradient scaling and the inherited bound used in the main text.
\begin{proof}[Proof of Lemma~\ref{g:lem:profilegap}]
Substituting $y=x+sw$ in \eqref{g:eq:frechet} cancels the scale
in the function value against the scale in the displacement. This
proves \eqref{g:eq:exact-subgradient-scaling}. Local equality $h=f$
and \eqref{g:eq:localgap} give the stated bound for actual difference quotients.

To pass the same constant to the limit, take actual difference quotient functions $H_j=H_{x_j,s_j}\to H$, with $x_j\to0$ and
$s_j\to0$, and fix $v\in\pf H(u)$. Lemma~\ref{g:fact:support} supplies a $C^1$ function $\phi$
and a small closed ball $B$ about $u$ such that
\[
 \phi(u)=H(u),\qquad \nabla\phi(u)=v,\qquad
 \phi(w)<H(w)\quad(w\in B\setminus\{u\}).
\]
Choose $u_j\in\arg\min_B(H_j-\phi)$ and put
$\varepsilon_j=\norm{H_j-H}_{\infty,B}\to0$. Then
\[
 0\le H(u_j)-\phi(u_j)
 \le H_j(u_j)-\phi(u_j)+\varepsilon_j
 \le H_j(u)-\phi(u)+\varepsilon_j\le2\varepsilon_j.
\]
Compactness and the unique zero of $H-\phi$ on $B$ give $u_j\to u$.
The minimizers are eventually interior, so the smooth perturbation rule
and the scaling in \eqref{g:eq:profile} give
\[
 \nabla\phi(u_j)\in\pf H_j(u_j)
 =\pf h(x_j+s_ju_j).
\]
Indeed, $y=x_j+s_jw$ in \eqref{g:eq:frechet} cancels the two scale
factors. Since $x_j+s_ju_j\to0$, local equality and
\eqref{g:eq:localgap} imply, for all large $j$,
\[
 \nabla\phi(u_j)\in\pf f(x_j+s_ju_j),\qquad
 \norm{\nabla\phi(u_j)}\ge a.
\]
Continuity of $\nabla\phi$ gives $\norm{v}\ge a$.
\end{proof}

The conclusion is global for each limit $H$, although the original
bound was local for $f$. There is no loss in the constant $a$.
The global definability of every $H\in\mathcal C$, needed for the
escape argument, is established in Lemma~\ref{g:fact:codes} by the
definable compactification described in Section~\ref{g:sec:orbits}.

We next relate these difference quotient functions to the original iteration. For $n\ge1$,
$X_n\in K_0$, and a fixed scale $s>0$, write
$H=H_{X_n,s}$, $v=V_n=g(X_n)=\nabla H(0)$, and
$\lambda=\gamma_n/s$. Lemma~\ref{g:lem:extension} justifies these
gradient identities almost surely. The scaled displacement is
\begin{equation}\label{g:eq:physical-displacement}
 \Delta=\frac{X_{n+1}-X_n}{s}=\lambda(-v+\eta_{n+1}).
\end{equation}
Thus $s\Delta$ is the original displacement, and
$H(0)-H(\Delta)=[h(X_n)-h(X_{n+1})]/s$ is its value decrease
in units of $s$.

The same displacement identity holds at every pre-exit update of a window, using its fixed spatial scale $s$. Choosing $s=\gamma_n$ makes the first rescaled step a unit step; later ratios are controlled in Lemma~\ref{g:lem:window}.

\subsection{The state transition and its stability}\label{g:sec:profiletools}
We define the noisy transition on difference quotient states and prove its continuity after averaging over the noise, which is needed to pass approximate stationarity to a weak limit. Set
$E=\mathcal D\times\overline B(0,L)$ with its product metric and
Borel sigma-field, generated by the open sets. An actual state has $v=\nabla H(0)$, but we retain
the two components separately because uniform convergence of functions
need not preserve the gradient at the center. For $(H,v)\in E$,
$\lambda>0$, use the noise space
$(\R^d,\mathcal B(\R^d),\Gamma_\Sigma)$, where
$\Gamma_\Sigma=N(0,\Sigma)$ and $\eta(z)=z$. With $(H,v)$ and
$\lambda$ fixed, define
\begin{equation}\label{g:eq:kernel}
 \Delta=\lambda(-v+\eta),\qquad
 H^+=T_\Delta H,\qquad v^+=\nabla H(\Delta).
\end{equation}
The components in \eqref{g:eq:kernel} are determined by the current
state and the noise. In particular,
\[
 H^+(u)=H(u+\Delta)-H(\Delta),\qquad
 v^+=\nabla H(\Delta)=\nabla H^+(0)
\]
whenever the derivative exists. Thus the update records the same
function viewed from the new point, together with its gradient there.
The gradient is assigned $0$ at nondifferentiability points;
Lemma~\ref{g:lem:measurable_gradient} gives a jointly Borel choice.
The Gaussian displacement has a density and therefore avoids these
exceptional points almost surely. By Lemma~\ref{g:lem:profilecompactness},
recentering preserves $\mathcal D$ and $\mathcal C$.

For a fixed input $(H,v)$ and a Borel set $A\subset E$, define
\begin{equation}\label{g:eq:kernel-law}
 P_\lambda((H,v),A)
 =\PP_\eta\bigl((T_{\lambda(-v+\eta)}H,
                 \nabla H(\lambda(-v+\eta)))\in A\bigr).
\end{equation}
This is the probability that one update from $(H,v)$ lands in $A$;
the notation $\PP_\eta$ means probability under $\Gamma_\Sigma$.
Likewise, $\E_\eta$ below integrates only this noise coordinate.
Joint measurability makes $P_\lambda$ a Markov kernel in the sense of
Definition~\ref{g:def:stationary}. Its action on a bounded Borel test
function $\psi:E\to\R$ is
\begin{equation}\label{g:eq:energydef}
 \begin{aligned}
 P_\lambda\psi(H,v)
 &=\int_E\psi(J,w)\,P_\lambda((H,v),d(J,w))\\
 &=\E_\eta\psi(H^+,v^+).
 \end{aligned}
\end{equation}
In particular, if the current state has law $\mu$ and the noise is
independent, its next-state law is given by
\[
 (\mu P_\lambda)(A)=\int_E P_\lambda((H,v),A)\,d\mu(H,v).
\]
Stationarity for the unit-step kernel $P=P_1$ means that this
next-state law equals $\mu$. It does not require a realized state to
remain unchanged after the update.

The expected descent at ratio $\lambda$ is defined by
\begin{equation}\label{g:eq:scaled-descent}
 d_\lambda(H,v)=\E_\eta[H(0)-H(\lambda(-v+\eta))].
\end{equation}
These definitions extend those of Section~\ref{g:sec:rescaled}:
$P=P_1$ and $d=d_1$. Probabilities on
$\mathcal C\times\overline B(0,L)$ are viewed as probabilities on
$E$ assigning mass $1$ to that subset. The Lipschitz bound gives
\[
 |d_\lambda(H,v)|\le\lambda L(L+\E_\eta\norm{\eta}).
\]

The next result supplies the continuity needed to take limits in the
difference quotient function transition \eqref{g:eq:kernel}. We then verify that stationarity
identifies the recorded vector with the current gradient.

Uniform monotonicity for the definable family bounds the number of
monotone pieces on every coordinate line. This gives local $L^1$
control of gradient differences by function-value differences.
Integration against the Gaussian density then gives continuity of
the update. We give the measurability and continuity proofs next.
The following arguments supply the two regularity facts used to define the
kernel in \eqref{g:eq:kernel} and to pass stopped occupation measures to a
stationary limit. The gradient representative is Borel by countable
difference-quotient tests. Stability then follows from uniform
definable monotonicity and Gaussian averaging.
\subsubsection{A measurable gradient representative}
\begin{lemma}[A jointly measurable gradient]\label{g:lem:measurable_gradient}
On the compact family $\mathcal D$ of normalized $L$-Lipschitz
difference quotient functions, the map $(H,u)\mapsto\nabla H(u)$, with value $0$ whenever
$H$ is not differentiable at $u$, is jointly Borel and takes values
in $\overline B(0,L)$. Moreover,
\[
 G=\{(H,v)\in E:H\text{ is differentiable at }0,
                    \ v=\nabla H(0)\}
\]
is a Borel subset of $E$.
\end{lemma}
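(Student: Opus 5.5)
The plan is to characterize differentiability, and the gradient itself, by countably many conditions on difference quotients. Each condition should be continuous, or at least Borel, in the pair $(H,u)$ on $\mathcal D\times\R^d$. The evaluation map $(H,u)\mapsto H(u)$ is jointly continuous, because convergence in $\mathcal D$ is locally uniform and every member is $L$-Lipschitz. Hence for fixed $w\in\R^d$ and $t>0$ the map
\[
(H,u)\mapsto \frac{H(u+tw)-H(u)}{t}
\]
is continuous. Take rational $t$ and rational $w$ from a countable dense subset of the unit sphere. Then $\limsup$ and $\liminf$ over $t\downarrow0$ of these quotients are Borel, and the set of points where the directional derivative along every such $w$ exists is Borel.

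To identify differentiability, use the Lipschitz bound. An $L$-Lipschitz function is differentiable at $u$ iff there is $p\in\R^d$ with
\[
\sup_{\norm{w}=1}\Bigl|\frac{H(u+tw)-H(u)}{t}-\ip{p}{w}\Bigr|\to0 .
\]
Because the quotients are $L$-Lipschitz in $w$, this supremum may be restricted to the countable dense set of directions. One candidate for $p$ is natural: the vector $p(H,u)$ whose $i$th coordinate is $\limsup_{t\downarrow0,\ t\in\mathbb Q}[H(u+te_i)-H(u)]/t$. This coordinate is Borel and bounded by $L$. The differentiability set then becomes
\[
\bigcap_{k\ge1}\bigcup_{m\ge1}\bigcap_{t\in\mathbb Q\cap(0,1/m)}\ \bigcap_{w}\Bigl\{\Bigl|\tfrac{H(u+tw)-H(u)}{t}-\ip{p(H,u)}{w}\Bigr|\le\tfrac1k\Bigr\},
\]
with $w$ ranging over the countable dense set of unit directions. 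Each set in this expression is Borel, since $p$ is Borel and the quotient is continuous. Restricting $t$ to rationals is justified by continuity of the quotient in $t$.

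Define $\nabla H(u)=p(H,u)$ on this Borel set and $0$ off it. This map is jointly Borel, and $\norm{p}\le\sqrt d\,L$ shows it is bounded. Moreover, at a differentiability point the gradient of an $L$-Lipschitz function has norm at most $L$, so the values lie in $\overline B(0,L)$.

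For $G$, note that $(H,v)\mapsto(H,0)$ is continuous. Composing gives a Borel map $\Phi(H,v)=\nabla H(0)$, and the differentiability set at $u=0$ is a Borel set $D_0$. Then
\[
G=\{(H,v)\in E: H\in D_0,\ v=\Phi(H,v)\},
\]
which is the intersection of a Borel set with the preimage of the closed diagonal under the Borel map $(H,v)\mapsto(v,\Phi(H,v))$.

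The main obstacle is making sure that the countable reduction really characterizes Fréchet differentiability. This depends on the uniform Lipschitz bound, which allows passing from rational directions to all directions and from rational $t$ to all small $t$. It also depends on using the coordinate $\limsup$ as the only possible candidate for the gradient. Once this is checked, all measurability is inherited from the joint continuity of evaluation on $\mathcal D\times\R^d$, which follows from compactness of $\mathcal D$ (Lemma~\ref{g:lem:profilecompactness}).
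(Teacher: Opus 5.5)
Your proposal is correct and follows essentially the same route as the paper: a Borel candidate gradient built from coordinate difference quotients, plus a countable reduction of the Fr\'echet differentiability test via continuity in the increment. Your coordinatewise $\limsup$ candidate and your treatment of $G$ as a preimage of the diagonal are only cosmetic variants of the paper's limit-with-existence-set and its direct rational test at $0$. (Joint continuity of evaluation comes from local uniform convergence and the common Lipschitz bound, as you first said, not from compactness of $\mathcal D$.)
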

\begin{proof}
Let $e_i$ denote the $i$th standard basis vector. For each coordinate
form the limits
\[
 b_i(H,u)=\lim_{m\to\infty}m[H(u+e_i/m)-H(u)],
\]
assigning $0$ when a limit does not exist. Evaluation is continuous,
so the limits and their existence sets are Borel. Where all coordinate
limits exist, differentiability with derivative $b$ is equivalent to
\[
 \lim_{k\to\infty}
 \sup_{\substack{q\in\mathbb Q^d\\0<\norm{q}\le1/k}}
 \frac{|H(u+q)-H(u)-\ip{b}{q}|}{\norm{q}}=0.
\]
For fixed $H,u,b$, the quotient is continuous in each nonzero
increment. Density of rational increments therefore extends the test
to all real increments. The test is Borel as a countable supremum
followed by a countable limit. Using $b$ on this Borel set and $0$
elsewhere gives the required joint representative; the Lipschitz bound
implies $\norm{\nabla H(u)}\le L$.

All difference quotient functions satisfy $H(0)=0$. Thus a pair $(H,v)\in E$ \mbox{belongs to $G$ exactly when}
\[
 \lim_{k\to\infty}
 \sup_{\substack{u\in\mathbb Q^d\\0<\norm{u}\le1/k}}
 \frac{|H(u)-\ip{v}{u}|}{\norm{u}}=0.
\]
Continuity in the increment again identifies this rational test with
differentiability at $0$ with derivative $v$. Each expression is
Borel in $(H,v)$, so $G$ is Borel.
\end{proof}
\subsubsection{Continuity after Gaussian averaging}
\begin{proposition}[Stability of one noisy update]\label{g:prop:kernel}
Let $H_j,H\in\mathcal D$, $v_j,v\in\overline B(0,L)$, and
$\lambda_j,\lambda>0$. Suppose $H_j\to H$ uniformly on compact
sets, $v_j\to v$, and $\lambda_j\to\lambda$.
Apply \eqref{g:eq:kernel} to these inputs. Then
\[
 (H_j^+,v_j^+)\ \Rightarrow\ (H^+,v^+),\qquad
 d_{\lambda_j}(H_j,v_j)\longrightarrow d_\lambda(H,v).
\]
In particular, $P_\lambda\psi(H,v)$ is jointly continuous in
$(H,v,\lambda)$ for every continuous $\psi:E\to\R$.
\end{proposition}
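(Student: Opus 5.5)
The proof couples the two updates through the same noise $\eta$ and reduces weak convergence to convergence in probability of the two components of the updated state. Put $\Delta_j=\lambda_j(-v_j+\eta)$ and $\Delta=\lambda(-v+\eta)$; then $\Delta_j\to\Delta$ for every $\eta$. The claimed weak convergence $(H_j^+,v_j^+)\Rightarrow(H^+,v^+)$ follows if both components converge in $\Gamma_\Sigma$-probability on this common space. The continuity of $P_\lambda\psi$ is then immediate by bounded convergence, since $\psi$ is bounded and continuous on the compact space $E$.

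\emph{Function component and descent.} Lemma~\ref{g:lem:profilecompactness} states that $(H,z)\mapsto T_zH$ is continuous. Hence $T_{\Delta_j}H_j\to T_\Delta H$ for every $\eta$. For the descent, the uniform $L$-Lipschitz bound and local uniform convergence give
\[
H_j(\Delta_j)\to H(\Delta)
\]
pointwise in $\eta$. The integrands are dominated by $L\norm{\Delta_j}\le L\lambda_j(L+\norm{\eta})$, which is integrable. Dominated convergence therefore yields $d_{\lambda_j}(H_j,v_j)\to d_\lambda(H,v)$.

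\emph{Gradient component.} This is the main obstacle, because local uniform convergence does not imply convergence of gradients. The map $w\mapsto\lambda(-v+w)$ carries $\Gamma_\Sigma$ to a Gaussian law with density bounded on compact sets, and the densities of the $\Delta_j$ are uniformly bounded there. Tightness lets us restrict to a large ball $B$. It is then enough to prove
\[
\int_B\norm{\nabla H_j(y_j(w))-\nabla H(y(w))}\,dw\to0 .
\]
By a change of variables and continuity of translation in $L^1$ for the bounded functions $\nabla H$, this reduces to showing $\nabla H_j\to\nabla H$ in $L^1_{\rm loc}$.

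For the $L^1_{\rm loc}$ convergence of gradients I will use definability, as announced before the proposition. Members of $\mathcal D$ are either actual difference quotients $H_{x,s}$, which form one definable family with parameters $(x,s)$, or locally uniform limits of them. Uniform monotonicity (\cite[Sections~4.1, 4.2 and~4.4]{vddm}) bounds the number of monotone pieces of $t\mapsto\partial_i H(u+te_i)$ by some $N$ independent of $(x,s,u)$. On a segment of length $\ell$, a function with at most $N$ monotone pieces and values in $[-L,L]$ satisfies a quantitative estimate. Splitting into pieces and using a Riemann–Stieltjes integration by parts, one gets
\[
\int_I|\partial_i F-\partial_i G|\,dt\ \lesssim\ N\bigl(L\,\delta+\norm{F-G}_{\infty,I}/\delta\bigr)
\]
for any mesh $\delta$. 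Concretely, partial derivatives are compared with difference quotients at step $\delta$, and bounded variation controls the error.

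Apply this first to $F=H_j$, $G=H_k$ with both actual difference quotients, and integrate over the transverse coordinates. Choosing $\delta$ small and then $j,k$ large shows that the actual quotients have Cauchy gradients in $L^1_{\rm loc}$. Since distributional gradients are determined by the functions, the $L^1$ limit must be $\nabla H$. General members of $\mathcal D$ are handled by diagonal approximation by actual quotients. The variation bound passes to the limit by lower semicontinuity, so the same estimate holds for $H$ itself.

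The delicate points are the uniformity of $N$ and the passage from coordinate-line bounds to $L^1$ on balls. Once these are settled, $v_j^+\to v^+$ in probability, and together with the function component this gives the weak convergence.
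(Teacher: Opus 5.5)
Your argument is correct, but it reaches the two key steps differently from the paper.

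\emph{Gradient estimate.} The paper applies uniform monotonicity to the \emph{differences} $t\mapsto H(y+te_i)-J(y+te_i)$ of two actual quotients. This is a definable family indexed by both parameter pairs and the transverse coordinate. On each line the total variation of the difference is at most $2M\sup|H-J|$. Fubini then gives the linear bound $\int_{Q_R}\norm{\nabla H-\nabla J}\le C_R\sup_{Q_R}|H-J|$ on cubes $Q_R=(-R,R)^d$, which extends to all of $\mathcal D$ simply by approximating both functions.

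You instead apply uniform monotonicity to the single definable function $\partial_i h$ along coordinate lines. This gives a uniform bound on the variation of the derivatives. Comparing with step-$\delta$ difference quotients then yields an estimate of order $LN\delta+\sup|F-G|/\delta$, i.e.\ only a square-root modulus. Your route costs an extra mollification step and a limiting argument for non-actual members of $\mathcal D$. It is simplest to pass the estimate itself to the limit, using the $L^1_{\mathrm{loc}}$ convergence already proved for actual quotients, rather than invoking lower semicontinuity of variation. In return, you use a simpler definable family and obtain an extra structural fact: every $H\in\mathcal D$ has derivatives of uniformly bounded variation along almost every coordinate line.

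\emph{The noisy update.} The paper does not couple the noise. It writes both expectations as integrals in the displacement variable against the densities $p_j,p$ and handles the change of density by Scheff\'e's lemma. It proves convergence for bounded Lipschitz tests and then approximates continuous tests uniformly. Your coupling through a common $\eta$, with convergence in probability of $(T_{\Delta_j}H_j,\nabla H_j(\Delta_j))$, is equally valid. One correction: since $\lambda_j\ne\lambda$, the maps $w\mapsto\lambda_j(-v_j+w)$ and $w\mapsto\lambda(-v+w)$ differ by a dilation as well as a translation. So you need continuity of $g\mapsto g\circ A$ in $L^1_{\mathrm{loc}}$ for affine $A\to\mathrm{id}$, not just continuity of translation. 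This follows from the same density-of-$C_c$ argument.
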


\begin{proof}[Proof of Proposition~\ref{g:prop:kernel}]
We first estimate gradient errors directly from function errors.
Fix $R>0$, set $Q_R=(-R,R)^d$, and take two actual difference quotient functions $H,J$ from
\eqref{g:eq:profile}. Their differences along coordinate lines form
one definable family; both sets of difference quotient function parameters and all
transverse coordinates are included as parameters. Uniform
monotonicity gives a common upper bound $M$ on the number of
monotone intervals of each such difference
\cite[\S\S4.1, 4.2 and 4.4]{vddm}.

Write $q(t)=H(y+te_i)-J(y+te_i)$ on a coordinate segment in $Q_R$,
with $y\in e_i^\perp$ and $|y_k|<R$ for $k\ne i$.
This function is Lipschitz and hence absolutely continuous. For
its monotone partition $-R=t_0<\cdots<t_m=R$, $m\le M$,
\[
 \int_{-R}^R|q'(t)|\,dt
 =\sum_{\ell=1}^m|q(t_\ell)-q(t_{\ell-1})|
 \le2M\sup_{z\in Q_R}|H(z)-J(z)|.
\]
For each fixed pair $H,J$, the slice derivatives agree with the
corresponding gradient components for almost every $t$ on almost
every line.
Integrating over $y$ and summing over coordinate directions gives
\begin{equation}\label{g:eq:gradientbound}
 \int_{Q_R}\norm{\nabla H-\nabla J}
 \le C_R\sup_{z\in Q_R}|H(z)-J(z)|,\qquad
 C_R=2Md(2R)^{d-1}.
\end{equation}
The constant is independent of the two difference quotient functions.

To extend this estimate to $\mathcal D$, let actual difference quotient functions
$F_k\to F$ locally uniformly. Equation~\eqref{g:eq:gradientbound}
makes $\nabla F_k$ Cauchy in $L^1(Q_R)$ for every $R$; let its
local limit be $G$. For a smooth compactly supported test function
$\phi\in C_c^\infty(\R^d)$,
\[
 \int G_i\phi
 =\lim_k\int\partial_iF_k\phi
 =-\lim_k\int F_k\partial_i\phi
 =-\int F\partial_i\phi.
\]
The limit $F$ is $L$-Lipschitz, so $G=\nabla F$ almost everywhere.
Approximating each of $H,J\in\mathcal D$ by actual difference quotient functions now
extends \eqref{g:eq:gradientbound} to all of $\mathcal D$.
In particular, $H_j\to H$ locally uniformly implies
$\nabla H_j\to\nabla H$ in local $L^1$.
This argument does not require the limits to be definable.

For the random update, use a metric $d_0$ on $\mathcal D$ for local
uniform convergence and the sum product metric on $E$.
Let $\varphi_\Sigma$ denote the density of $N(0,\Sigma)$.
In the common displacement variable $z$, the Gaussian densities are
\[
 p_j(z)=\lambda_j^{-d}\varphi_\Sigma(z/\lambda_j+v_j),\qquad
 p(z)=\lambda^{-d}\varphi_\Sigma(z/\lambda+v).
\]
They converge pointwise and integrate to $1$. Thus
\[
 \norm{p_j-p}_{L^1}
 =2-2\int\min\{p_j,p\}\longrightarrow0,
\]
by dominated convergence with dominating function $p$.
For a bounded Lipschitz test $\psi$ on $E$, both updates can be integrated over the same displacement $z$:
\[
 P_{\lambda_j}\psi(H_j,v_j)=\int\psi(T_zH_j,\nabla H_j(z))p_j(z)\,dz,
 \quad
 P_\lambda\psi(H,v)=\int\psi(T_zH,\nabla H(z))p(z)\,dz.
\]
Add and subtract the first integrand with density $p$, then separate $Q_R$ from its complement. This gives
\begin{align*}
 &|P_{\lambda_j}\psi(H_j,v_j)-P_\lambda\psi(H,v)|\\
 &=\Big|\int\Big\{\psi(T_zH_j,\nabla H_j(z))[p_j(z)-p(z)]\\
 &\hspace{35mm}+ [\psi(T_zH_j,\nabla H_j(z))-\psi(T_zH,\nabla H(z))]p(z)\Big\}\,dz\Big|\\
 &\le\norm{\psi}_\infty\norm{p_j-p}_{L^1}\\
 &\quad+\int_{Q_R}|\psi(T_zH_j,\nabla H_j(z))-\psi(T_zH,\nabla H(z))|p(z)\,dz
       +2\norm{\psi}_\infty\int_{Q_R^c}p(z)\,dz\\
 &\le\norm{\psi}_\infty\norm{p_j-p}_{L^1}
       +\operatorname{Lip}(\psi)\int_{Q_R}d_0(T_zH_j,T_zH)p(z)\,dz\\
 &\quad+\operatorname{Lip}(\psi)\int_{Q_R}\norm{\nabla H_j(z)-\nabla H(z)}p(z)\,dz
       +2\norm{\psi}_\infty\int_{Q_R^c}p(z)\,dz\\
 &\le\norm{\psi}_\infty\norm{p_j-p}_{L^1}
       +\operatorname{Lip}(\psi)\sup_{z\in Q_R}d_0(T_zH_j,T_zH)\\
 &\quad+\operatorname{Lip}(\psi)\norm{p}_\infty C_R\sup_{z\in Q_R}|H_j(z)-H(z)|
       +2\norm{\psi}_\infty\int_{Q_R^c}p(z)\,dz.
\end{align*}
The four terms respectively control the change of density, the translated functions, the gradients through~\eqref{g:eq:gradientbound}, and the tail outside $Q_R$.
For fixed $R$, joint continuity of translation on the compact
closure $\overline Q_R$ makes the supremum tend to $0$. First let
$j\to\infty$, then $R\to\infty$. Bounded Lipschitz functions
are uniformly dense in $C(E)$; for any such approximation $\psi_\epsilon$,
\begin{align*}
 &|P_{\lambda_j}\psi(H_j,v_j)-P_\lambda\psi(H,v)|\\
 &\quad\le2\norm{\psi-\psi_\epsilon}_\infty
   +|P_{\lambda_j}\psi_\epsilon(H_j,v_j)
                         -P_\lambda\psi_\epsilon(H,v)|.
\end{align*}
This proves continuity for every continuous test, and hence the
claimed convergence in distribution.

For each fixed $\eta$, local uniform convergence and the common
Lipschitz bound give
\[
 H_j(0)-H_j(\lambda_j(-v_j+\eta))
 \to H(0)-H(\lambda(-v+\eta)),
\]
and
\[
 |H_j(0)-H_j(\lambda_j(-v_j+\eta))|
 \le L\bigl(\sup_j\lambda_j\bigr)(L+\norm{\eta}).
\]
The bound is integrable, so dominated convergence proves the
assertion about the expected descent.
\end{proof}

The current gradient is correct after one step: for every input,
the displacement has a density and $H$ is differentiable there
almost surely. Hence
\[
 v^+=\nabla H(\Delta)=\nabla H^+(0)\quad\text{almost surely}.
\]
Let $G=\{(H,v):H\text{ is differentiable at }0,\ v=\nabla H(0)\}$,
which is Borel by Lemma~\ref{g:lem:measurable_gradient}. Since
$P((H,v),G)=1$ for every input, every stationary probability satisfies
\begin{equation}\label{g:eq:stationarygradient}
 \mu(G)=\mu P(G)=\int_E P(y,G)\,d\mu(y)=1.
\end{equation}
Thus stationarity restores the gradient relation that need not be
preserved by a deterministic limit at the center of a difference quotient function.

Joint continuity on compact sets also gives
\begin{equation}\label{g:eq:uniformkernel}
 \sup_{(H,v)\in E}|P_\lambda\psi(H,v)-P\psi(H,v)|\to0,
 \qquad \norm{d_\lambda-d}_\infty\to0
 \quad\text{as }\lambda\to1.
\end{equation}

\subsection{Translation orbits and stationary distributions}\label{g:sec:orbits}
To compute stationary mean descent, we decompose a stationary distribution into components supported on individual translation orbits. Definability provides measurable orbit representatives, and each component reduces to a position process in the normal directions.

\subsubsection{Definable limits and measurable orbit representatives}
A definable parametrization of the limiting functions permits measurable choices of representatives for their translation orbits.
For nonempty compact sets $A,B$, $d_H(A,B)$ denotes their Hausdorff
distance, the larger of $\sup_{a\in A}\inf_{b\in B}\norm{a-b}$ and
$\sup_{b\in B}\inf_{a\in A}\norm{a-b}$.
A standard Borel space is a measurable space isomorphic to a Borel
subset of a complete separable metric space. Such spaces admit the
conditional distributions used below.
\begin{lemma}[A jointly definable family of limiting functions]\label{g:fact:codes}
There are a compact definable parameter set $P_0$ and a jointly definable family $(H_p)_{p\in P_0}$ such that
\[
 \mathcal C=\{H_p:p\in P_0\},\qquad
 p_j\to p\ \Longrightarrow\ H_{p_j}\to H_p\text{ locally uniformly}.
\]
In particular, every $H\in\mathcal C$ is globally definable.
\end{lemma}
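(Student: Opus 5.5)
The plan is to realize $\mathcal C$ as the set of limits at $t\downarrow0$ of a one-parameter definable family, and then to use definable choice together with the finiteness properties of o-minimal structures to parametrize those limits definably.

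First, introduce the jointly definable family $\Phi(x,s,u)=H_{x,s}(u)=[h(x+su)-h(x)]/s$ for $(x,s)\in\R^d\times(0,\infty)$. It is definable because $h$ is (Lemma~\ref{g:lem:extension}). The parameter space $(x,s)$ is noncompact and the limit $(x,s)\to(0,0)$ is not a one-variable limit. To handle this, use a compactification of the parameter space. Take $Q=\{(x,s,t):\norm{x}\le t,\ 0<s\le t,\ 0<t\le1\}$, or more simply the set $Q^\circ=\{(x,s):\norm{x}\le1,\ 0<s\le1\}$, and consider the definable set of ``codes''. The key claim is that every locally uniform limit of $H_{x_j,s_j}$ with $(x_j,s_j)\to0$ is already a limit along a definable curve $t\mapsto(x(t),s(t))$.

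To prove this claim, I would use the following fact. For a definable family of functions whose members are uniformly $L$-Lipschitz, the closure in the compact-open topology is again described by a definable family. This is the o-minimal ``definable compactness of types'' or Marker--Steinhorn tameness phenomenon. The concrete route is as follows. For each $k$, the map $(x,s)\mapsto (H_{x,s}(u))_{u}$ restricted to a finite grid is definable. Membership of a continuous $L$-Lipschitz $H$ with $H(0)=0$ in $\mathcal C$ is equivalent to the condition that, for every $\varepsilon>0$ and $M$, there are arbitrarily small $(x,s)$ with $\sup_{\norm{u}\le M}|H_{x,s}-H|<\varepsilon$. I would then invoke the Marker--Steinhorn theorem: in an o-minimal expansion of the reals, every type over $\R$ is definable. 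Equivalently, limits of definable families along ultrafilters are defined by parameters in $\R$ using the same structure, after passing to a definable family of (possibly more) parameters. This yields a definable set $P_1$ and a definable family $(G_p)_{p\in P_1}$ of $L$-Lipschitz functions whose members are exactly the elements of $\mathcal C$.

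Next comes compactness and continuity of the parametrization. Define an equivalence on $P_1$ by $G_p=G_{p'}$. Definable choice then gives a definable set of representatives. Take the induced map $p\mapsto G_p$ into the compact metric space $\mathcal C$ (Lemma~\ref{g:lem:profilecompactness}). To obtain a compact $P_0$ with continuity, I would replace $P_1$ by the closure of the graph. Let $\Gamma=\{(p,u,G_p(u))\}$ and let $P_0$ be the closure of $P_1$ in a definable one-point compactification (via a semialgebraic homeomorphism $\R^m\to B(0,1)$). At a boundary point $p$, define $H_p$ as the locally uniform limit of $G_{p'}$ as $p'\to p$. For this step, cell decomposition first lets us make $p\mapsto G_p$ continuous into locally uniform topology on each cell; a finite partition then suffices. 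Where the limit is non-unique, blow up by including the limit value data in the parameter. Explicitly, take $P_0=\overline{\{(p,(G_p(u_i))_i)\}}$ over a countable dense set of $u_i$, which is not definable. So instead, I would define $P_0$ as the closure in $\R^m\times\R$ of the definable graph $\{(p,u,y):y=G_p(u)\}$ fibered suitably, using uniform Lipschitz bounds to show fibers of the closure over each boundary parameter are graphs of Lipschitz functions after a further definable reparametrization of directions of approach (curve selection).

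The main obstacle is this last step: producing a single compact definable parameter set with continuous dependence. I expect to handle it with curve selection plus the uniform Lipschitz bound, which guarantees that limits along definable curves exist and are definable in the same structure. Equicontinuity then shows that closure points correspond to such limits. Finally, global definability of each $H\in\mathcal C$ follows because $H=H_p$ is a fiber of a definable family.
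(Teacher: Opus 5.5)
Your proposal has a genuine gap at the step you yourself flag as the main obstacle: producing a \emph{compact} definable parameter set on which $p\mapsto H_p$ is continuous for locally uniform convergence. Curve selection and the uniform Lipschitz bound do not give this.

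Curve selection shows that every boundary point of $P_1$ is reached along some definable curve. Monotonicity and the Lipschitz bound then show that $G_{p(t)}$ converges along that curve. But the limit depends on the curve, not only on its endpoint. Definable curves do not form a single definable family over which you can quantify. And nothing in these facts yields a compact set of parameters that carries all the limits continuously.

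Your own attempts run into exactly this. Defining $H_p$ as a limit at a boundary point fails when the limit depends on the direction of approach. Recording values on a countable dense set is not definable. ``The closure of the graph fibered suitably'' does not specify any parametrization. Compactifying a definable family so that the dependence becomes continuous is the genuinely nontrivial content of the Lion--Speissegger theorem; it does not follow from curve selection.

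The first step also has an unjustified point. Marker--Steinhorn makes each type over $\R$ definable, but to get \emph{one} jointly definable family $(G_p)_{p\in P_1}$ you need the defining formulas to be uniform across all types, and you only gesture at this. Your ``key claim'' about limits along definable curves is never proved or used.

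The paper avoids both issues in one step.

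\begin{itemize}
\item It encodes each $H_{x,s}$, with $\norm{x}\le1$ and $0<s\le1$, by the compact set $A_{x,s}$: the closure of $\{(u/\sqrt{1+\norm{u}^2},\,H_{x,s}(u)/\sqrt{1+\norm{u}^2},\,x,\,s):u\in\R^d\}$. This is a bounded definable family of nonempty compact sets.
\item It applies the Lion--Speissegger compactification in Zell's form. The family lies inside a definable family $B_p$ over a compact definable $P$, with $p\mapsto B_p$ Hausdorff continuous.
\item Because the center and scale are recorded inside each set, the limiting parameters $P_0=\{p\in\overline{P_1}:B_p\subset\overline B(0,1)\times[-L,L]\times\{(0,0)\}\}$ form a compact definable set.
\item For $p\in P_0$, the part of $B_p$ whose first coordinate has norm less than $1$ is the transformed graph of a unique $H_p\in\mathcal C$. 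Every member of $\mathcal C$ arises this way.
\item Hausdorff continuity together with the uniform Lipschitz bound gives $H_{p_j}\to H_p$ locally uniformly.
\end{itemize}

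Injectivity is not required: different $p$ may record different boundary data at infinity for the same function. That is precisely how the direction dependence you ran into is absorbed. Once this theorem is available, your Marker--Steinhorn step is unnecessary.
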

\begin{proof}
We use the Lion--Speissegger compactification \cite{lion}, in the form of \cite[Theorem~5]{zell}: a bounded definable family of nonempty compact sets $A_t$ is contained in a definable family $B_p$ parametrized by a compact definable set $P$, with $p\mapsto B_p$ Hausdorff continuous.
To apply the compactification theorem, we represent each difference quotient function by a compact
set that also records its center and scale. For $\norm{x}\le1$ and
$0<s\le1$, define
\begin{equation}\label{g:eq:graphcompact}
 A_{x,s}=\overline{\left\{
 \left(\frac{u}{\sqrt{1+\norm{u}^2}},
       \frac{H_{x,s}(u)}{\sqrt{1+\norm{u}^2}},x,s\right)
 :u\in\R^d\right\}}.
\end{equation}
The closure is taken in $\R^d\times\R\times\R^d\times\R$ with
$(x,s)$ fixed. Thus $A_{x,s}$ is the closure of the transformed graph
of $H_{x,s}$, with the same center $x$ and scale $s$ attached to every
point. Its first two components record the transformed argument and
function value; its last two components retain the parameters from
which the difference quotient function was obtained. Since $H_{x,s}(0)=0$ and $H_{x,s}$
is $L$-Lipschitz, these nonempty compact sets satisfy
\[
 A_{x,s}\subset
 \overline B(0,1)\times[-L,L]\times\{x\}\times\{s\},
\]
and form a bounded definable family over the stated parameter range.
The first component has norm strictly below $1$ at every finite
argument $u$; points with first-component norm $1$ are added by the
closure to record behavior as $\norm{u}\to\infty$.
This theorem gives a compact parameter set $P$ and a
Hausdorff-continuous definable family $B_p$ containing all these sets.

Let $P_1$ consist of the parameters representing an original set
$A_{x,s}$, and let $P_0$ consist of their limiting parameters for
which the recorded center and scale are both $0$. Explicitly, set
\[
 \begin{split}
 P_1&=\{p\in P:\exists x\in\overline B(0,1),\ \exists s\in(0,1],
                    \ B_p=A_{x,s}\},\\
 P_0&=\{p\in\overline{P_1}:
 B_p\subset\overline B(0,1)\times[-L,L]\times\{(0,0)\}\}.
 \end{split}
\]
These sets are definable, and $P_0$ is compact. If $p_j\in P_1$
converges to $p\in P_0$, write $B_{p_j}=A_{x_j,s_j}$. Hausdorff
continuity and the recorded parameters give
\[
 d_H(A_{x_j,s_j},B_p)\to0,\qquad (x_j,s_j)\to(0,0).
\]
Conversely, a sequence with $(x_j,s_j)\to(0,0)$ has representing
parameters in $P_1$ with a subsequence converging into $P_0$.

For $p\in P_0$, the part of $B_p$ with spatial norm strictly below
one determines a unique global $L$-Lipschitz function $H_p$ with
$H_p(0)=0$. More precisely, a point $(y,r,0,0)\in B_p$,
where $y\in\R^d$, $\norm{y}<1$, and $r\in\R$, recovers an argument
and its function value by
\[
 u=\frac{y}{\sqrt{1-\norm{y}^2}},\qquad
 H_p(u)=\frac{r}{\sqrt{1-\norm{y}^2}}.
\]
To justify this reconstruction, extract a locally uniformly
convergent subsequence of the approaching difference quotient functions. The displayed
inverse is continuous when $\norm{y}<1$, so Hausdorff convergence
identifies the recovered value with that of the difference quotient function limit.
The interior graph determines the function, and hence
\[
 \{H_p:p\in P_0\}=\mathcal C,\qquad
 p_j\to p\quad\Longrightarrow\quad H_{p_j}\to H_p
 \ \text{locally uniformly}.
\]
The convergence follows from compactness of the normalized Lipschitz
family: every subsequential limit has the same interior graph.
Evaluation $(p,u)\mapsto H_p(u)$ is definable by the inverse formula.
Different parameters may still encode different boundary data at
infinity for the same function.

\end{proof}
The joint family is needed below: it makes equality and translation equivalence definable relations on a single parameter set, rather than separate assertions about individual functions.

\begin{lemma}[A measurable quotient by translations]\label{g:lem:orbits}
There are a standard Borel space $Q$, a Borel map $\rho:\mathcal C\to Q$ whose fibers are exactly the translation orbits, and a Borel choice of a representative function for each $q\in Q$.
\end{lemma}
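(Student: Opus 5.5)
The plan is to work entirely on the definable parameter set $P_0$ from Lemma~\ref{g:fact:codes} and to build $Q$ as a definable quotient there, then push it to $\mathcal C$. First, consider the relation
\[
 \mathcal R=\{(p,p')\in P_0\times P_0:\exists z\in\R^d\ \forall u\in\R^d\ H_{p'}(u)=H_p(u+z)-H_p(z)\}.
\]
Since $(p,u)\mapsto H_p(u)$ is definable, $\mathcal R$ is a definable equivalence relation on $P_0$: it is reflexive, and symmetry and transitivity follow from $T_{-z}T_z=\mathrm{id}$ and $T_zT_w=T_{z+w}$. It also absorbs the non-injectivity of $p\mapsto H_p$, since $z=0$ relates two parameters coding the same function. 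By definable choice in o-minimal structures (\cite[Chapter~6]{vdd}), there is a definable map $\sigma:P_0\to P_0$ with $\sigma(p)\,\mathcal R\,p$ and with $\sigma(p)=\sigma(p')$ whenever $p\,\mathcal R\,p'$. The image $Q=\sigma(P_0)$ is a definable, hence Borel, subset of Euclidean space, so it is standard Borel. The representative function attached to $q\in Q$ is $H_q$. Its Borel dependence on $q$ follows from the continuity $p\mapsto H_p$ in Lemma~\ref{g:fact:codes}, restricted to the Borel set $Q$.

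Second, I transfer $\sigma$ from parameters to functions. The evaluation map $\Phi:P_0\to\mathcal C$, $p\mapsto H_p$, is a continuous surjection between compact metric spaces. Set $\rho(H)=\sigma(p)$ for any $p$ with $H_p=H$. This is well defined because equal functions give $\mathcal R$-related parameters. For measurability, note that the graph of $\rho$ is
\[
 \{(H,q):\exists p\in P_0,\ \Phi(p)=H,\ \sigma(p)=q\}.
\]
This set is the continuous image of the Borel graph $\{(p,\Phi(p),\sigma(p))\}$. A function with analytic graph between standard Borel spaces is Borel (Souslin), so $\rho$ is Borel. Alternatively, one can pick a Borel section of $\Phi$ using the Kuratowski--Ryll-Nardzewski selection theorem for the closed-valued map $H\mapsto\Phi^{-1}(H)$, and compose it with $\sigma$.

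Third, I check that the fibers of $\rho$ are exactly the translation orbits. If $H'=T_zH$, choose codes $p,p'$; then $p\,\mathcal R\,p'$, so $\rho(H)=\rho(H')$. Conversely, equal values of $\rho$ give $\sigma(p)=\sigma(p')$, so $p\,\mathcal R\,\sigma(p)\,\mathcal R\,p'$ and $H'$ is a translate of $H$. Lemma~\ref{g:lem:profilecompactness} guarantees that each orbit stays inside $\mathcal C$.

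The main obstacle is the first step. Definable choice gives a definable choice function on the fibers, but getting a single representative per equivalence class requires the choice to be made on the quotient. The standard fix is to apply definable choice to the family of classes $\{[p]:p\in P_0\}$, viewed as a definable family of sets indexed by $p$ with equal sets for related $p$. In o-minimal structures definable choice depends only on the set $[p]$, not on its index, which gives canonical representatives (elimination of imaginaries, \cite[Chapter~6, Theorem~1.2 and Section~6.1]{vdd}). If this causes trouble, a fallback is to note that each orbit is a closed subset of compact $\mathcal C$ (a continuous image of $N$ together with its limits). One would then use a Borel selector for the closed orbit equivalence relation via lexicographic minimization over parameters, applied to the definable compact fibers.
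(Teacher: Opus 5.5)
Your argument is correct and is essentially the paper's proof. Both code $\mathcal C$ by the definable family of Lemma~\ref{g:fact:codes} and apply strong definable choice to the translation relation; you apply it once on $P_0$ (your $\mathcal R$ already identifies duplicate codes via $z=0$) and get Borelness of $\rho$ from the analytic-graph theorem, whereas the paper first uses choice to pass to an injective coding $\chi:P_*\to\mathcal C$ whose inverse is Borel by Lusin--Souslin, and then applies choice again. Discard your fallback, which is not needed and rests on a false claim: translation orbits need not be closed in $\mathcal C$. For $H(u)=-|u|$ in $d=1$, $T_zH(u)=|z|-|u+z|\to-u$ locally uniformly as $z\to+\infty$, and $u\mapsto-u$ is not a translate of $H$.
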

\begin{proof}
Use the family from Lemma~\ref{g:fact:codes}. Strong definable choice \cite[Chapter~6, Proposition~1.2]{vdd} states that a definable family of nonempty sets $D_t$ admits a definable choice $b(t)\in D_t$ with $D_t=D_{t'}\Rightarrow b(t)=b(t')$. We apply it first to remove duplicate function parameters and then to choose one function from each translation class.

Apply strong choice to the definable relation
$p\sim p'$ if $H_p(u)=H_{p'}(u)$ for all $u$. Let $P_*\subset P_0$
be the resulting definable set containing exactly one parameter for
each distinct function. The map
\[
 \chi:P_*\to\mathcal C,\qquad \chi(p)=H_p,
\]
is a continuous bijection. Its inverse is Borel by the Lusin--Souslin
theorem \cite[Corollary~15.2]{kechris}. On $P_*$ the orbit relation is
\[
 p\simeq p'\quad\Longleftrightarrow\quad
 \exists z\ \forall u\quad
 H_{p'}(u)=H_p(u+z)-H_p(z).
\]
A second application of strong choice gives a definable representative
map $b:P_*\to Q\subset P_*$ with
\[
b(p)=b(p')\quad\Longleftrightarrow\quad p\simeq p'.
\]
Here $Q=b(P_*)$ contains one parameter for each translation orbit.
Thus $\rho=b\circ\chi^{-1}$ is Borel and
\[
 \rho(H)=\rho(J)\quad\Longleftrightarrow\quad
 J=T_zH\ \text{for some }z\in\R^d.
\]
The representative of $q\in Q$ is $\chi(q)$. The definable Euclidean
set $Q$ is standard Borel, \mbox{completing the construction.}
\end{proof}

\subsubsection{The geometry of a translation orbit}
The translations satisfying $T_zH=H$ form a subspace $S$. Splitting $\R^d=S\oplus S^\perp$ separates a linear part of $H$ from the normal function that determines its translation orbit.
\mbox{For example, in two dimensions,}
\[
 H(u_1,u_2)=c u_1+k(u_2)
 \quad\Longrightarrow\quad
 (T_{(a,b)}H)(u_1,u_2)=c u_1+k(u_2+b)-k(b).
\]
The position $a$ does not enter the new difference quotient function. The construction below
also includes any additional translation symmetries of $k$.

\begin{proof}[Proof of Lemma~\ref{g:lem:split}]
Continuity and the translation group law give
\[
 z_i\in S,\ z_i\to z\ \Longrightarrow\ T_zH=H,
 \qquad T_{z+w}H=T_zT_wH,
 \qquad T_{-z}T_zH=H.
\]
Thus $S$ is a closed additive subgroup. It is definable by
Lemma~\ref{g:fact:codes}. For $z\in S$, the definable subgroup
$\{t\in\R:tz\in S\}$ contains $\mathbb Z$, hence contains an interval
$I$ of positive length. Consequently
\[
 \R=\bigcup_{m\ge1}m(I-I)\subset\{t:tz\in S\},
\]
so $S$ is a linear subspace.

The identity $T_pH=H$ gives
\[
 H(u+p)=H(u)+H(p),\qquad p\in S,
 \qquad H(tp)=tH(p),\qquad t\in\mathbb Q.
\]
For $t_i\in\mathbb Q$ with $t_i\to t\in\R$, continuity yields
$H(tp)=\lim_iH(t_ip)=tH(p)$. In an orthonormal basis
$e_1,\ldots,e_{\dim S}$, set $c=\sum_iH(e_i)e_i$. Then
\[
 H(p)=\sum_i\ip{p}{e_i}H(e_i)=\ip{c}{p},
 \qquad H(p+q)=\ip{c}{p}+H(q).
\]
Taking $K=H|_N$ proves \eqref{g:eq:split}; restriction preserves
definability and the Lipschitz bound.

Let $v\in\pf H(p+q)$. For $0\ne b\in S$, increments $tb$ give
\[
 0\le\liminf_{t\to0}
 \frac{t\ip{c-P_Sv}{b}}{|t|\norm{b}}
 =-\frac{|\ip{c-P_Sv}{b}|}{\norm{b}}.
\]
Hence $P_Sv=c$. With increments in $N$,
\[
 \liminf_{b\to0,\ b\in N\setminus\{0\}}
 \frac{K(q+b)-K(q)-\ip{P_Nv}{b}}{\norm{b}}\ge0,
\]
which gives $P_Nv\in\pf^NK(q)$ and the first inclusion in
\eqref{g:eq:subsplit}. Conversely, let $w\in\pf^NK(q)$. For every
$\epsilon>0$ and sufficiently small $b\in N$,
$K(q+b)-K(q)-\ip{w}{b}\ge-\epsilon\norm{b}$, also when $b=0$.
For a sufficiently small orthogonal increment $a+b\in S+N$,
\begin{align*}
 &H(p+q+a+b)-H(p+q)-\ip{c+w}{a+b}\\
 &\qquad=K(q+b)-K(q)-\ip{w}{b}
 \ge-\epsilon\norm{b}
 \ge-\epsilon\norm{a+b}.
\end{align*}
This proves $c+w\in\pf H(p+q)$ and the reverse inclusion.

Finally,
\[
 T_{p+q}H=T_qH,\qquad
 T_{q_1}H=T_{q_2}H
 \ \Longleftrightarrow\ q_1-q_2\in S\cap N=\{0\}.
\]
Thus $z\mapsto T_zH$ is a continuous bijection from $N$ onto its
orbit. Its inverse is Borel by the Lusin--Souslin consequence
\cite[Corollary~15.2]{kechris}, applied to this injection into the
\mbox{compact metric space $\mathcal C$.}
\end{proof}

Thus \eqref{g:eq:split} is an exact decomposition, not a Taylor
linearization. Movement along $S$ contributes a linear change in value
but does not need to be recorded as a new difference quotient function position. This
observation turns the difference quotient function kernel into an ordinary chain on $N$.

\begin{lemma}[The stationary chain in normal coordinates]\label{g:lem:normal}
Let a stationary probability for $P$ assign probability $1$ to pairs
whose difference quotient function is a translation of $H\in\mathcal C$.
Under the identification in
Lemma~\ref{g:lem:split}, its normal position has a stationary law for
\begin{equation}\label{g:eq:normalchain}
 Z^+=Z-\nabla_NK(Z)+P_N\eta.
\end{equation}
Its full displacement and its difference quotient function decrease are
\begin{align}
 \Delta&=-c-\nabla_NK(Z)+\eta,\label{g:eq:fulldisplacement}\\
 (T_ZH)(0)-(T_ZH)(\Delta)
 &=\norm{c}^2-\ip{c}{P_S\eta}+K(Z)-K(Z^+).
 \label{g:eq:orbitincrement}
\end{align}
If $N$ has positive dimension, the stationary normal law is absolutely
continuous on $N$.
\end{lemma}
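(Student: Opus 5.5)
Use the orbit bijection of Lemma~\ref{g:lem:split} to transport a stationary $\mu$ on the orbit of $H$ to a law on $N\times\overline B(0,L)$. The first step is to write each state as $(T_ZH,v)$ with $Z\in N$ the Borel image under the inverse map $T_zH\mapsto z$. By \eqref{g:eq:stationarygradient}, $\mu$-almost surely $v=\nabla(T_ZH)(0)=\nabla H(Z)$. Using \eqref{g:eq:split}, $H(p+q)=\ip{c}{p}+K(q)$, so $\nabla H(Z)=c+\nabla_NK(Z)$ wherever $K$ is differentiable at $Z$ (differentiability of $H$ at $Z$ is equivalent to that of $K$ on $N$, since the $S$-part is linear). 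This gives \eqref{g:eq:fulldisplacement}: $\Delta=-v+\eta=-c-\nabla_NK(Z)+\eta$.

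Next, compute the new state. We have $T_\Delta T_ZH=T_{Z+\Delta}H=T_{P_N(Z+\Delta)}H$, because $T_pH=H$ for $p\in S$. Since $P_Nc=0$, $P_N\nabla_NK=\nabla_NK$ and $P_NZ=Z$, the normal position becomes $Z^+=Z-\nabla_NK(Z)+P_N\eta$, which is \eqref{g:eq:normalchain}. The new recorded vector is $\nabla H(Z+\Delta)=c+\nabla_NK(Z^+)$ almost surely, so it is again a function of $Z^+$. Because the map $Z\mapsto(T_ZH,c+\nabla_NK(Z))$ is Borel and injective on the relevant support, stationarity of $\mu$ under $P$ pushes forward to stationarity of the law of $Z$ under the chain \eqref{g:eq:normalchain}. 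For this I would check $\int\psi\,d\mu=\int P\psi\,d\mu$ for tests of the form $\psi(H',w)=\phi(\text{normal coordinate of }H')$, where $\phi$ is bounded Borel on $N$, and these tests determine the law.

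For the decrease, write $(T_ZH)(0)-(T_ZH)(\Delta)=H(Z)-H(Z+\Delta)$. Split $Z+\Delta=P_S\Delta+Z^+$, where $P_S\Delta=-c+P_S\eta$, and apply \eqref{g:eq:split} with $H(Z)=K(Z)$. This gives $K(Z)-\ip{c}{-c+P_S\eta}-K(Z^+)$, which is \eqref{g:eq:orbitincrement}.

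The main obstacle is absolute continuity of the stationary normal law when $\dim N>0$. Write $\pi$ for this law. By stationarity, $\pi(A)=\int\PP(Z-\nabla_NK(Z)+P_N\eta\in A)\,\pi(dZ)$. For each fixed $Z$, the vector $P_N\eta$ is a nondegenerate Gaussian on $N$, since $P_N\Sigma P_N$ restricted to $N$ is positive definite. Hence every inner probability vanishes whenever $A$ is Lebesgue-null in $N$, and therefore $\pi(A)=0$. The delicate points are measurability of $Z\mapsto\nabla_NK(Z)$, which follows from Lemma~\ref{g:lem:measurable_gradient} or from definability of $K$, and the fact that the identification of the stationary state with $Z$ is Borel, which holds by the Lusin--Souslin inverse in Lemma~\ref{g:lem:split}.
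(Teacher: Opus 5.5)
Your proposal is correct and follows essentially the same route as the paper. The stationary-gradient identity and the split give $v=c+\nabla_NK(Z)$, the group law $T_\Delta T_ZH=T_{P_N(Z+\Delta)}H$ gives the normal chain, the Borel inverse from Lemma~\ref{g:lem:split} transfers stationarity, and the nondegenerate Gaussian density of $P_N\eta$ on $N$ gives absolute continuity; your explicit check with tests of the normal coordinate (extended by zero off the orbit, which the kernel preserves) just spells out the transfer step the paper states in one line.
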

\begin{proof}
Let $\pi$ be the distribution on $N$ obtained by taking the
normal position of a state with the given stationary distribution.
Realize the position and the next noise on
\[
 \bigl(N\times\R^d,\,
       \mathcal B(N)\otimes\mathcal B(\R^d),\,
       \pi\otimes\Gamma_\Sigma\bigr),
 \qquad Z(z,e)=z,\quad \eta(z,e)=e.
\]
Here $\Gamma_\Sigma=N(0,\Sigma)$, and $H$, $N$, $K$, and $c$ are
fixed. The product measure makes $Z$ and $\eta$ independent.
The stationary-gradient identity \eqref{g:eq:stationarygradient}
identifies the mark at the state $T_ZH$
almost surely. Differentiating \eqref{g:eq:split} gives
\[
 v=\nabla(T_ZH)(0)=\nabla H(Z)=c+\nabla_NK(Z),
 \qquad \Delta=-c-\nabla_NK(Z)+\eta.
\]
Differentiability of $H$ at $Z$ and of $K$ on $N$ at $Z$ are
equivalent by the same decomposition. For the output difference quotient function,
\begin{align*}
 H^+&=T_\Delta T_ZH=T_{Z+\Delta}H=T_{P_N(Z+\Delta)}H,\\
 Z^+&=P_N(Z+\Delta)
      =Z-\nabla_NK(Z)+P_N\eta.
\end{align*}
The Borel inverse in Lemma~\ref{g:lem:split} therefore takes a stationary
difference quotient function law to a stationary law for \eqref{g:eq:normalchain}. Also,
\begin{align*}
 (T_ZH)(0)-(T_ZH)(\Delta)
 &=H(Z)-H(Z+\Delta)\\
 &=-\ip{c}{P_S\Delta}+K(Z)-K(Z+P_N\Delta)\\
 &=\norm{c}^2-\ip{c}{P_S\eta}+K(Z)-K(Z^+),
\end{align*}
which proves \eqref{g:eq:orbitincrement}.

If $\dim N>0$, then $u^{\mathsf T}\Sigma u>0$ for every
$u\in N\setminus\{0\}$. Write $\varphi_N$ for the Gaussian density
of $P_N\eta$ on $N$. For every Lebesgue-null set $A\subset N$,
\[
 \begin{aligned}
 \pi(A)&=(\pi\otimes\Gamma_\Sigma)(Z^+\in A)\\
 &=\int_N\left(\int_A
       \varphi_N(z-y+\nabla_NK(y))\,dz\right)\pi(dy)=0.
 \end{aligned}
\]
This proves absolute continuity. If $N=\{0\}$, the normal chain has
one state and all formulas hold with $\nabla_NK=0$.
\end{proof}

Only the marginal distribution of $P_N\eta$ enters the normal chain;
the two orthogonal noise projections may be correlated.

\subsubsection{Conditioning a stationary distribution on its orbit}
A stationary distribution may involve several translation orbits, while the normal position process is defined within one orbit. Because recentering preserves each orbit, disintegration yields stationary orbit components to which the fixed-step escape lemma can be applied.

\begin{proposition}[Stationary analysis of a fixed shape]\label{g:prop:mixture}
Let $\mu$ be stationary for $P$ and satisfy
$\mu(\mathcal C\times\overline B(0,L))=1$. There exist a standard
Borel space $\Theta$, a probability $\nu$ on $\Theta$, a Borel map
$\theta\mapsto H_\theta\in\mathcal C$, and a measurable family of
probabilities $\mu_\theta$ on $E$ such that, for every $\theta\in\Theta$,
$H_\theta$ is globally definable and
\[
 \mu_\theta P=\mu_\theta,\qquad
 \mu_\theta\{(J,v):J=T_zH_\theta
                    \text{ for some }z\in\R^d\}=1.
\]
For every bounded Borel function $\psi:E\to\R$,
\[
 \int_E\psi\,d\mu
 =\int_\Theta\left(\int_E\psi\,d\mu_\theta\right)d\nu(\theta).
\]
In particular, the mean descent can be computed by this identity
with $\psi=d$.
\end{proposition}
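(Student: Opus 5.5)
The plan is to disintegrate $\mu$ along the translation-orbit label from Lemma~\ref{g:lem:orbits}. This label is preserved exactly by every transition, so stationarity should pass to the conditional distributions. Take $Q$, $\rho$ and the representative map $\chi$ from Lemma~\ref{g:lem:orbits}. Define the Borel map $\phi:E\to Q$ by $\phi(H,v)=\rho(H)$ on $E_0:=\mathcal C\times\overline B(0,L)$, and by a fixed point $q_0\in Q$ off $E_0$. The set $E_0$ is closed, so $\phi$ is Borel.

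\textbf{Step 1: $\phi$ is invariant under the kernel.} For $(H,v)\in E_0$, every output of \eqref{g:eq:kernel} is $(T_\Delta H,\nabla H(\Delta))$. By Lemma~\ref{g:lem:profilecompactness}, $T_\Delta H$ stays in $\mathcal C$ and in the same orbit. Hence $P((H,v),\cdot)$ is concentrated on $E_0\cap\{\phi=\phi(H,v)\}$. This gives the pull-out identity
\[
 P\bigl((g\circ\phi)\,\psi\bigr)=(g\circ\phi)\,P\psi\qquad\text{on }E_0,
\]
for bounded Borel $g:Q\to\R$ and $\psi:E\to\R$.

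\textbf{Step 2: disintegration.} Set $\Theta=Q$ and $\nu=\phi_\#\mu$. Since $E$ is compact metric and $Q$ is standard Borel, the disintegration theorem gives a Borel family $\theta\mapsto\mu_\theta$ of probabilities on $E$ with $\mu_\theta(\phi=\theta)=1$ for $\nu$-a.e.\ $\theta$. It also gives $\int\psi\,d\mu=\int_\Theta\int\psi\,d\mu_\theta\,d\nu$ for bounded Borel $\psi$. Because $\mu(E_0)=1$, we also have $\mu_\theta(E_0)=1$ a.e. The event $\{\phi=\theta\}\cap E_0$ is exactly the set of pairs whose function lies in the orbit of $H_\theta:=\chi(\theta)$. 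This $H_\theta$ is Borel in $\theta$ and is globally definable by Lemma~\ref{g:fact:codes}.

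\textbf{Step 3: stationarity of the components.} For bounded Borel $g$ and $\psi$, combine stationarity of $\mu$ with Step 1:
\[
 \int g(\phi)\,P\psi\,d\mu=\int P\bigl(g(\phi)\psi\bigr)\,d\mu=\int g(\phi)\,\psi\,d\mu.
\]
Disintegrating both sides gives
\[
 \int_\Theta g(\theta)\Bigl(\int P\psi\,d\mu_\theta-\int\psi\,d\mu_\theta\Bigr)d\nu(\theta)=0
\]
for all $g$. Hence the inner bracket vanishes $\nu$-a.e. Take $\psi$ from a countable family $\{\psi_k\}$ that is uniformly dense in $C(E)$, which exists because $E$ is compact metric. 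Then a single $\nu$-null set $\Theta_1$ serves all $k$. Off $\Theta_1$, uniform approximation gives $\int P\psi\,d\mu_\theta=\int\psi\,d\mu_\theta$ for every $\psi\in C(E)$. Since $\mu_\theta P$ and $\mu_\theta$ are Borel probabilities on a metric space, agreement on $C(E)$ gives $\mu_\theta P=\mu_\theta$.

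\textbf{Step 4: removing the null set.} Let $\Theta_2$ be the null set where the orbit concentration of Step 2 fails. Replace $\Theta$ by the Borel set $\Theta\setminus(\Theta_1\cup\Theta_2)$, which is still standard Borel and still carries full $\nu$-mass. Then both properties hold for every $\theta$, and the integral identity is unchanged. Taking $\psi=d$, which is bounded by the Lipschitz bound, gives the mean-descent formula.

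The step I expect to need the most care is Step 1. The pull-out identity requires that $\phi$ be exactly, not merely almost surely, constant along transitions from states in $E_0$, and that the kernel not leave $E_0$. Both follow from the recentering invariance of $\mathcal C$ in Lemma~\ref{g:lem:profilecompactness} together with orbit preservation. The remaining work is the measurability bookkeeping, which the standard Borel structure supplied by Lemma~\ref{g:lem:orbits} is designed to handle.
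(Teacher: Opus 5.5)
Your proposal is correct and follows essentially the same route as the paper. You disintegrate $\mu$ along the orbit label $\rho$ from Lemma~\ref{g:lem:orbits}, use exact orbit preservation by the kernel to pull the label through $P$, obtain $\int(P\psi-\psi)\,d\mu_\theta=0$ for $\nu$-a.e.\ $\theta$ over a countable uniformly dense family in $C(E)$, and then discard a Borel null set of labels. The only step you leave implicit is one the paper states: the concentration condition $\mu_\theta(\rho^{-1}(\theta))=1$ cuts out a Borel set of labels, because $\{(q,H,v):\rho(H)=q\}$ is Borel and $\theta\mapsto\mu_\theta$ is a measurable family.
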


Recentering preserves a translation class. Conditioning on that
class will therefore preserve stationarity as well.

\begin{proof}
Use Lemma~\ref{g:lem:orbits} and write $\rho(H,v)=\rho(H)$ on
$\mathcal C\times\overline B(0,L)$. All fibers $\rho^{-1}(q)$ below
are taken in this domain and viewed as Borel subsets of $E$.
The standard Borel conditional
distribution theorem \cite[Theorems~4.1.17--4.1.18]{durrett} gives
\[
 \bar\mu=\rho_\#\mu,\qquad
 \mu=\int_Q\mu_q\,d\bar\mu(q),\qquad
 \mu_q(\rho^{-1}(q))=1
 \quad\text{for }\bar\mu\text{-almost every }q.
\]
Here $\rho_\#\mu$ is the pushforward probability,
$(\rho_\#\mu)(A)=\mu(\rho^{-1}(A))$, and $(\mu_q)$ is a measurable
family of conditional probabilities on $E$. The
measure identity holds on every Borel set and against every
bounded Borel test.

For bounded Borel $b$ on $Q$, extend $b\circ\rho$ by $0$ outside
$\mathcal C\times\overline B(0,L)$. The kernel preserves each orbit,
so on this set, for $\psi\in C(E)$,
\[
 P[(b\circ\rho)\psi]=(b\circ\rho)P\psi.
\]
Stationarity of $\mu$ gives
\begin{align*}
 &\int_Q b(q)\!\left[\int_E(P\psi-\psi)\,d\mu_q\right]d\bar\mu(q)\\
 &\qquad=\int_E\bigl(P[(b\circ\rho)\psi]-(b\circ\rho)\psi\bigr)\,d\mu=0.
\end{align*}
Since $b$ is arbitrary, the inner integral vanishes almost everywhere.
Let $\mathcal A\subset C(E)$ be countable and uniformly dense, and set
\[
 \Theta=\{q:\mu_q(\rho^{-1}(q))=1\}
 \cap\bigcap_{\psi\in\mathcal A}
 \left\{q:\int_E(P\psi-\psi)\,d\mu_q=0\right\}.
\]
The set $\{(q,H,v):\rho(H)=q\}$ is Borel. Integration against the
measurable family $\mu_q$ therefore makes the first condition Borel;
the other conditions are Borel as well. Thus $\Theta\subset Q$ is
standard Borel and $\bar\mu(\Theta)=1$.

For $\theta\in\Theta$ and $\psi\in C(E)$, choose
$\psi_i\in\mathcal A$ with $\norm{\psi_i-\psi}_\infty\to0$. Since
$P$ is a contraction \mbox{in the supremum norm,}
\[
 \left|\int_E(P\psi-\psi)\,d\mu_\theta\right|
 \le2\norm{\psi_i-\psi}_\infty
       +\left|\int_E(P\psi_i-\psi_i)\,d\mu_\theta\right|
 \longrightarrow0.
\]
Continuous tests determine probability measures on the compact space
$E$, so $\mu_\theta P=\mu_\theta$ for every $\theta\in\Theta$.

Restrict $\bar\mu$ to $\Theta$ and call the resulting probability
$\nu$. Let $H_\theta$ be the Borel representative supplied by
Lemma~\ref{g:lem:orbits}. Each $H_\theta$ is globally definable, and
\[
 \rho^{-1}(\theta)
 =\{(J,v)\in E:J=T_zH_\theta\text{ for some }z\in\R^d\}.
\]
The defining property of $\Theta$ gives probability $1$ to this orbit.
Finally, for every bounded Borel $\psi$,
\[
 \int_E\psi\,d\mu
 =\int_Q\left(\int_E\psi\,d\mu_q\right)d\bar\mu(q)
 =\int_\Theta\left(\int_E\psi\,d\mu_\theta\right)d\nu(\theta).
\]
The function $d$ is bounded and Borel by Proposition~\ref{g:prop:kernel},
so it is an admissible test in this identity.
\end{proof}

\subsection{Fixed-step escape}\label{g:sec:fixed-tool}
We prove Lemma~\ref{g:lem:fixed}: a global positive lower bound on Fr\'echet subgradient norms forces almost-sure escape for Gaussian subgradient iterations with step size $1$. The bound gives linear decrease on spheres; a homogeneous limit at infinity identifies a region supporting outward motion, and the Gaussian density makes this region accessible from every bounded set. The resulting exclusion of stationary position distributions is the escape ingredient in the positive-descent proof of Section~\ref{g:sec:energy}.

\label{g:sec:fixed}
The hypothesis implies $L\ge a>0$, by evaluating a gradient at a
differentiability point.  It allows empty Fr\'echet subdifferentials and points at
which $0\in\pc H$.  The proof therefore cannot replace
\eqref{g:fixed:gap} by a lower bound on the particular Clarke selection
$V_n$.  Instead, it first extracts a direction of linear decrease from
the Fr\'echet gap.

\begin{lemma}[A lower value on every sphere]\label{g:lem:sphere}
Under \eqref{g:fixed:gap}, every $x\in\R^d$ and $r>0$ satisfy
\begin{equation}\label{g:fixed:sphere}
 \min_{\norm{z-x}=r}H(z)\le H(x)-\frac a2 r.
\end{equation}
\end{lemma}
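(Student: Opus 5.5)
The plan is a contradiction argument using a smooth quadratic penalty and the generalized Fermat rule. Definability is not needed for this step; only continuity of $H$ and the global Fr\'echet bound \eqref{g:fixed:gap} are used.

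Fix $x\in\R^d$ and $r>0$, and suppose \eqref{g:fixed:sphere} fails. Since $H$ is continuous and the sphere is compact, the minimum is attained, so failure means
\[
 H(z)>H(x)-\frac a2 r\qquad\text{for every } z \text{ with } \norm{z-x}=r.
\]
Define the $C^1$ penalty and the perturbed function
\[
 \phi(y)=\frac{a}{2r}\norm{y-x}^2,\qquad F=H+\phi .
\]
The function $F$ is continuous on the compact ball $\overline B(x,r)$, so it attains its minimum there at some point $y_*$.

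Next I show $y_*$ is not on the sphere. At the centre, $F(x)=H(x)$. On the sphere, $\phi=ar/2$, so
\[
 F(z)=H(z)+\frac a2 r>H(x)=F(x)\qquad(\norm{z-x}=r).
\]
Hence every minimizer satisfies $\norm{y_*-x}<r$ and lies in the open ball. Then $y_*$ is an unconstrained local minimizer of $F$. The generalized Fermat rule gives $0\in\pf F(y_*)$. The smooth perturbation rule, which follows directly from \eqref{g:eq:frechet}, then gives
\[
 -\nabla\phi(y_*)=-\frac{a}{r}(y_*-x)\in\pf H(y_*).
\]

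The contradiction comes from the norm of this subgradient:
\[
 \norm{\nabla\phi(y_*)}=\frac{a}{r}\norm{y_*-x}<a .
\]
This violates \eqref{g:fixed:gap}, including the case $y_*=x$, where the subgradient is $0$. Therefore \eqref{g:fixed:sphere} holds.

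The only delicate point is matching the constants. The penalty must be large enough on the sphere, namely $ar/2$, to push the minimizer inside. At the same time its gradient must stay strictly below $a$ in the open ball, and the quadratic with coefficient $a/(2r)$ achieves both. This explains the factor $1/2$ in the statement.
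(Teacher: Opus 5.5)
Your proof is correct and follows the paper's route. Both minimize $H$ plus a radial penalty over $\overline B(x,r)$, use the generalized Fermat rule with \eqref{g:fixed:gap} to exclude interior minimizers, and compare with the value at the center. The only difference is the penalty: the paper uses the smoothed cone $\frac a2\sqrt{\norm{z-x}^2+\varepsilon^2}$ and lets $\varepsilon\downarrow0$, while your quadratic $\frac{a}{2r}\norm{y-x}^2$ avoids that limit but is tied to the factor $1/2$, which the cone version could replace by the full constant $a$.
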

\begin{proof}
For $\varepsilon>0$, minimize
\[
 H(z)+\frac a2\sqrt{\norm{z-x}^2+\varepsilon^2}
\]
over the closed ball $\overline B(x,r)$.  By the generalized Fermat rule \cite[Theorem~10.1]{RW}, an interior
minimizer would give
\[
 -\frac a2\frac{z-x}{\sqrt{\norm{z-x}^2+\varepsilon^2}}
 \in\pf H(z).
\]
The displayed vector has norm less than $a/2$, contrary to
\eqref{g:fixed:gap}.  A minimizer $z_\varepsilon$ thus lies on the
boundary.  Comparison with $x$ gives
\[
 H(z_\varepsilon)\le H(x)+\frac a2\varepsilon
             -\frac a2\sqrt{r^2+\varepsilon^2}.
\]
Take a convergent subsequence on the compact sphere and let
$\varepsilon$ decrease to $0$.
\end{proof}

We next look at $H$ from distances tending to infinity.  The resulting
homogeneous potential describes radial movement, while an outer limit
of the original Clarke fields retains the possible drift directions.
Lemma~\ref{g:lem:recession} makes this construction precise.
Retaining this field is necessary because convergence of potentials
does not identify their Clarke subdifferentials pointwise.

\begin{lemma}[The homogeneous potential and its conservative field]\label{g:lem:recession}
There is a definable, globally $L$-Lipschitz, positively homogeneous
function $J$, where positive homogeneity means $J(tu)=tJ(u)$ for
$t\ge0$, such that
\begin{equation}\label{g:fixed:recession}
 \frac{H(Ru)-H(0)}{R}\longrightarrow J(u)
 \quad\text{locally uniformly as }R\longrightarrow\infty,
 \qquad \min_{\norm{u}=1}J(u)\le-\frac a2.
\end{equation}
Define
\begin{equation}\label{g:fixed:field}
 \begin{split}
 A(u)=\{v:\ &\text{there exist }R_j\to\infty,\ u_j\to u,
                  \ v_j\to v\\[-1mm]
             &\text{with }v_j\in\pc H(R_ju_j)\},
 \qquad \mathcal V(u)=\conv A(u).
 \end{split}
\end{equation}
Then $\mathcal V$ is definable, has closed graph and nonempty compact convex
values, and is bounded by $L$.  It is conservative for $J$ and satisfies
\begin{equation}\label{g:fixed:radial}
 \mathcal V(tu)=\mathcal V(u),\qquad \ip{v}{u}=J(u)
 \quad\text{for }t>0\text{ and }v\in \mathcal V(u).
\end{equation}
\end{lemma}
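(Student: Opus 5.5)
The plan is to study the one-parameter family $F_R(u)=\bigl(H(Ru)-H(0)\bigr)/R$ for $R\ge1$. Each $F_R$ vanishes at $0$ and is $L$-Lipschitz, and the family is jointly definable in $(R,u)$. For fixed $u$, the map $R\mapsto F_R(u)$ is a bounded definable function of one variable. By the monotonicity theorem recalled in Section~\ref{g:sec:prelim}, it is eventually monotone, so $J(u)=\lim_{R\to\infty}F_R(u)$ exists for every $u$. Equi-Lipschitz continuity then upgrades pointwise convergence to locally uniform convergence. The limit $J$ is $L$-Lipschitz, vanishes at $0$, and is definable, since it is given by a first-order limit formula. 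Positive homogeneity follows from the identity $F_R(tu)=tF_{Rt}(u)$ for $t>0$ by letting $R\to\infty$; the case $t=0$ is trivial.

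For the bound on the sphere, I will apply Lemma~\ref{g:lem:sphere} with $x=0$ and $r=R$. It gives $z_R$ with $\norm{z_R}=R$ and $H(z_R)\le H(0)-aR/2$. Hence $\min_{\norm{u}=1}F_R(u)\le-a/2$, and uniform convergence on the compact unit sphere yields $\min_{\norm{u}=1}J(u)\le-a/2$.

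For the field, I will first note the exact scaling $\pc F_R(u)=\pc H(Ru)$, obtained by the same substitution as in Lemma~\ref{g:lem:profilegap}, now applied to gradient limits. Thus $A$ in \eqref{g:fixed:field} is precisely the outer graphical limit of the fields $\mathcal V_s=\pc F_{1/s}$, $s\in(0,1]$. The pieces fit Lemma~\ref{g:fact:conservative} as follows.
\begin{itemize}
\item The family $(F_s,\pc F_s)$ is jointly definable.
\item Each pair admits a definable variational stratification by \citep[Proposition~2.15]{schechtman}.
\item The values are bounded by $L$.
\item $F_s\to J$ locally uniformly.
\end{itemize}
Lemma~\ref{g:fact:conservative} then shows that $A$ and $\mathcal V=\conv A$ are conservative for $J$. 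The remaining structural properties are routine. $A(u)$ is nonempty because $\pc H$ has nonempty values. A diagonal extraction gives $A$ a closed graph, and $A$ is bounded by $L$. By Carath\'eodory's theorem and boundedness, $\conv A$ inherits a closed graph and compact convex values. Definability holds because $A$ is defined by a first-order formula over the definable graph of $\pc H$.

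For \eqref{g:fixed:radial}, I will first show $A(tu)=A(u)$ for $t>0$. This holds because $R_j(tu_j)=(R_jt)u_j$ and $tu_j\to tu$ exactly when $u_j\to u$, so convex hulls agree as well. Next, fix $u\ne0$ and consider the curve $x(\tau)=\tau u$ for $\tau\in(1,2)$. By homogeneity $J(x(\tau))=\tau J(u)$, whose derivative is $J(u)$. Conservativity gives $J(u)=\ip{v}{u}$ for every $v\in\mathcal V(\tau u)$ and almost every $\tau$. Since $\mathcal V(\tau u)=\mathcal V(u)$ does not depend on $\tau$, one good $\tau$ suffices, and the identity holds for all $v\in\mathcal V(u)$. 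At $u=0$ both sides vanish.

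The step I expect to be the main obstacle is verifying the hypotheses of Lemma~\ref{g:fact:conservative} in the reparametrization $s=1/R$. The key point is that its outer limit, taken along $s_j\downarrow0$, coincides exactly with $A$. If needed, I will restate the stratification of $(F_s,\pc F_s)$ for the jointly definable family explicitly, rather than for each $s$ separately.
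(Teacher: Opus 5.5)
Your proposal is correct and follows essentially the same route as the paper. The steps match: eventual monotonicity with equi-Lipschitz bounds gives $J$, the sphere lemma gives the bound $-a/2$, the scaling identity $\pc F_R(u)=\pc H(Ru)$ feeds the one-parameter conservative-limit lemma, and the ray argument uses $\mathcal V(tu)=\mathcal V(u)$. The differences are cosmetic: the paper parametrizes directly by $s=1/R$, and it obtains definability and closedness of $A$ from an explicit graph-closure identity rather than a first-order description and diagonal extraction.
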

\begin{proof}
Set $f_s(u)=s[H(u/s)-H(0)]$ for $s>0$.  For every $u,w$,
\[
 |f_s(u)|\le L\norm{u},\qquad
 |f_s(u)-f_s(w)|\le L\norm{u-w}.
\]
For each fixed $u$, the one-variable definable function $s\mapsto f_s(u)$ has a finite partition into intervals on each of which it is constant or strictly monotone \cite[Section~4.1]{vddm}. It is therefore constant or monotone on some interval $(0,\varepsilon_u)$. The bound $|f_s(u)|\le L\norm{u}$ gives a finite limit $J(u)=\lim_{s\downarrow0}f_s(u)$.  The second inequality passes to
the limit, so $J$ is $L$-Lipschitz.  If $u_1,\ldots,u_m$ form an
$\varepsilon$-net in a compact set $K$, then
\[
 \sup_{u\in K}|f_s(u)-J(u)|
 \le2L\varepsilon+\max_{1\le i\le m}|f_s(u_i)-J(u_i)|.
\]
First let $s\downarrow0$, then $\varepsilon\downarrow0$.  This
proves local uniform convergence.  Definability follows from
\[
 \begin{split}
 J(u)=z\quad\Longleftrightarrow\quad
 &\forall\varepsilon>0\ \exists s_0>0\ \forall s\in(0,s_0),\\[-1mm]
 &|f_s(u)-z|<\varepsilon.
 \end{split}
\]
For $t>0$, homogeneity and the sphere bound of
Lemma~\ref{g:lem:sphere} give
\begin{align*}
 J(tu)&=\lim_{s\downarrow0}f_s(tu)
       =t\lim_{s\downarrow0}f_{s/t}(u)=tJ(u),\\
 \min_{\norm{u}=1}f_s(u)
 &=s\!\left[\min_{\norm{z}=1/s}H(z)-H(0)\right]\le-\frac a2,\\
 \min_{\norm{u}=1}J(u)
 &=\lim_{s\downarrow0}\min_{\norm{u}=1}f_s(u)\le-\frac a2.
\end{align*}
The last limit uses uniform convergence on the sphere.

The Clarke values are nonempty and bounded by $L$.  Choosing
$v_j\in\pc H(ju)$ and taking a subsequence proves $A(u)\ne\varnothing$.
The graph identity
\[
 \operatorname{graph}A
 =\{(u,v):(u,0,v)\in
       \overline{\{(w,s,z):0<s\le1,\ z\in\pc H(w/s)\}}\}
\]
gives definability and closedness.  Boundedness then gives compact
values.  To check convexification, take $u_j\to u$ and $w_j\to w$
with $w_j\in \mathcal V(u_j)$. Carath\'eodory's theorem
\cite[Theorem~2.29]{RW} gives
\[
 \begin{gathered}
 w_j=\sum_{i=0}^{d}\theta_i^{(j)}v_i^{(j)},\qquad
 v_i^{(j)}\in A(u_j),\\
 \theta_i^{(j)}\ge0,\qquad\sum_{i=0}^{d}\theta_i^{(j)}=1.
 \end{gathered}
\]
After a subsequence, $\theta_i^{(j)}\to\theta_i$ and
$v_i^{(j)}\to v_i\in A(u)$ for every $i$.  Therefore
$w=\sum_i\theta_iv_i\in \mathcal V(u)$.  This proves closedness of the
graph of $\mathcal V$ and, with the same bound, compactness of its values.

Apply Lemma~\ref{g:fact:conservative} to $f_s$, $0<s\le1$, with
$f_0=J$.  This is a single jointly definable \mbox{parameter family, and}
\[
 \varnothing\ne\pc f_s(u)=\pc H(u/s)\subset\overline B(0,L).
\]
Its positive-parameter Clarke fields admit the required variational
stratifications.  The joint potential limit follows from
\[
 |f_s(u')-J(u)|\le L\norm{u'-u}+|f_s(u)-J(u)|\longrightarrow0
 \quad\text{as }(u',s)\to(u,0).
\]
The horizon term is $0$ because
\[
 \varepsilon_j\to0,\quad v_j\in\pc f_{s_j}(u_j)
 \quad\Longrightarrow\quad
 \norm{\varepsilon_jv_j}\le L|\varepsilon_j|\longrightarrow0.
\]
Schechtman's Theorem~4.3 and Corollary~4.5 thus give conservativity
of $A$ and $\mathcal V=\conv A$ for $J$~\cite{schechtman}.

For $t>0$, a sequence defining $v\in A(tu)$ also satisfies
\[
 v_j\in\pc H\bigl((tR_j)(u_j/t)\bigr),\qquad
 tR_j\to\infty,\quad u_j/t\to u.
\]
Hence $A(tu)\subset A(u)$.  The same argument with $1/t$ gives
$A(tu)=A(u)$ and consequently $\mathcal V(tu)=\mathcal V(u)$.  Fix $v\in \mathcal V(u)$.
Along the ray $t\mapsto tu$, conservativity and homogeneity give
\[
 J(u)=\frac{d}{dt}[tJ(u)]
     =\frac{d}{dt}J(tu)=\ip{v}{u}
 \quad\text{for almost every }t\in(1,2).
\]
The first and last expressions do not depend on $t$, proving
\eqref{g:fixed:radial} for this arbitrary $v$.
\end{proof}

For $\epsilon>0$, write
\[
 \mathcal V_{\epsilon}(u)=\{w:\ \norm{u-z}\le \epsilon,
                \ \norm{w-v}\le \epsilon\text{ for some }v\in \mathcal V(z)\}.
\]
On any compact annulus, \eqref{g:fixed:field} implies the uniform
inclusion
\begin{equation}\label{g:fixed:inclusion}
 \pc H(Ru)\subset \mathcal V_{\epsilon}(u)
 \quad\text{for all sufficiently large }R.
\end{equation}
Indeed, a sequence violating the inclusion has convergent state and
field subsequences, whose limit belongs to the graph of $A$.  This is
the approximation of the original drift used below.

Use the potential and field of Lemma~\ref{g:lem:recession}.
Write $\mathbb S^{d-1}=\{u\in\R^d:\norm{u}=1\}$ for the unit
sphere. For $u\in\mathbb S^{d-1}$, use $J(u)$ as the angular value.
Along the limiting differential
inclusion $\dot x\in-\mathcal V(x)$, a negative angular value forces outward
radial movement.  The next lemma also gives a strict angular margin
at the next radius.  That margin makes the conclusion stable under
discretization and small cumulative noise.

The polygon recursion below comes directly from the fixed-step chain.
At the start of a phase, let $r=\norm{Y_n}$ and use the space and
time scales
\[
 x_k=Y_{n+k}/r,\qquad t_k=k/r,\qquad v_k=V_{n+k},\qquad
 e_k=\frac1r\sum_{i=1}^k\eta_{n+i},\quad e_0=0.
\]
Dividing \eqref{g:fixed:chain} by $r$ gives exactly
\[
 x_{k+1}-x_k=-\frac{v_k}{r}+\frac{\eta_{n+k+1}}r
 =-(t_{k+1}-t_k)v_k+e_{k+1}-e_k.
\]
Thus $e_k$ is the cumulative rescaled noise; its difference is the
single-step perturbation. For large $r$, \eqref{g:fixed:inclusion}
places the actual drift in $\mathcal V_\epsilon(x_k)$ on the relevant
annulus. The next lemma allows a general small mesh and controls the
whole cumulative error, which is the quantity bounded by
Lemma~\ref{g:lem:doob} in the stochastic argument.

\begin{lemma}[A robust outward phase]\label{g:lem:outward}
There are $b<0$, $\delta>0$, $T>0$, and $\epsilon\in(0,1)$ with the
following property.  Consider an Euler polygon with vertices $x_k$
at times $0=t_0<t_1<\cdots$, with mesh at most $\epsilon$, and increments
\begin{equation}\label{g:fixed:polygon}
 x_{k+1}=x_k-(t_{k+1}-t_k)v_k+e_{k+1}-e_k,
 \qquad v_k\in \mathcal V_{\epsilon}(x_k).
\end{equation}
Assume $e_0=0$, $\sup_k\norm{e_k}\le \epsilon$, $\norm{x_0}=1$, and
$J(x_0)\le b$.  The mesh is defined through the first time at least
$T$.  Before a mesh time exceeding $T$ by at most $\epsilon$, the polygon
reaches a first vertex with radius at least two.  At every preceding
vertex its radius is greater than $1/2$ and its angular value is less than
$b+\delta/2$.  At that vertex,
\begin{equation}\label{g:fixed:block}
 2\le\norm{x_k}\le3,
 \qquad J(x_k/\norm{x_k})\le b-\delta/2.
\end{equation}
It is enough to impose the field inclusion in
\eqref{g:fixed:polygon} before this vertex or the first violation of
\mbox{either preceding bound.}
\end{lemma}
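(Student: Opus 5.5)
The plan is to prove Lemma~\ref{g:lem:outward} first for exact solutions of the limiting inclusion $\dot x\in-\mathcal V(x)$, with strict margins built in. Robustness for Euler polygons then follows from a compactness argument in which $\epsilon\downarrow0$.

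\emph{Step 1: radial and angular evolution along the limiting flow.} Let $x(\cdot)$ be absolutely continuous with $\dot x=-v(t)$ and $v(t)\in\mathcal V(x(t))$ almost everywhere. Write $\rho=\norm{x}$ and $\theta=J(x)/\rho=J(x/\rho)$, using homogeneity of $J$. By \eqref{g:fixed:radial}, $\rho'=\ip{\dot x}{x/\rho}=-J(x)/\rho=-\theta$. The conservativity in Lemma~\ref{g:lem:recession}, tested with $w=v(t)$, gives $(J\circ x)'=-\norm{v}^2$. Hence
\[
\theta'=\frac{\theta^2-\norm{v}^2}{\rho}=-\frac{\norm{v-\theta x/\rho}^2}{\rho}\le0 .
\]
The last equality holds because $\ip{v}{x/\rho}=\theta$, so $\theta^2-\norm{v}^2$ equals minus the squared tangential part of $v$. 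Thus $\theta$ is nonincreasing, and it decreases strictly exactly when $v$ has a nonzero tangential part.

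\emph{Step 2: choice of $b$ and $\delta$ (the main obstacle).} Monotonicity alone gives only $\theta\le b$ at exit, because purely radial motion keeps $\theta$ constant. So $b$ must be chosen at a level where radial motion is impossible. Call $\beta$ a \emph{critical angular value} if some unit $u$ with $J(u)=\beta$ admits $v\in\mathcal V(u)$ with $v=J(u)u$.

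I would show that the set of critical angular values is finite, hence nowhere dense. For this I would apply the definable Sard theorem for conservative fields of a definable potential (the variational stratification of $(J,\mathcal V)$ is available as in Section~\ref{g:sec:prelim}) to the restriction of $J$ to the sphere with the tangential projection of $\mathcal V$. If this fails, the fallback is that this set of values is a definable subset of $\R$ of empty interior, again by Sard; that is equally finite.

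Since $\min_{\mathbb S^{d-1}}J\le-a/2<0$ by Lemma~\ref{g:lem:recession}, I pick a noncritical $b\in(\min J,\min J/2)$. Then $b<0$ and $\{J\le b\}$ meets the sphere. By closedness of the graph of $\mathcal V$ and compactness, there are $\delta>0$ and $c>0$ such that every unit $u$ with $|J(u)-b|\le2\delta$ and every $v\in\mathcal V(u)$ satisfy $\norm{v-J(u)u}\ge c$.

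\emph{Step 3: the exact flow exits with margin.} Start the flow at $\norm{x_0}=1$ with $\theta(0)\le b$. Since $\theta\le b<0$, we have $\rho'\ge|b|$, so $\rho$ increases and reaches $2$ at a time $\tau\le1/|b|$. Since $\rho'\le L$, also $\tau\ge1/L$, and $\rho\le2$ on $[0,\tau]$.

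Suppose $\theta(\tau)>b-\delta$. Monotonicity then keeps $\theta$ in $[b-\delta,b]$ on all of $[0,\tau]$, so Step 1 gives $\theta'\le-c^2/2$ there. This yields $\theta(\tau)\le b-c^2/(2L)$. Shrinking $\delta$ so that $\delta<c^2/(2L)$ gives a contradiction. Therefore $\theta(\tau)\le b-\delta$, while $\theta\le b$ and $\rho\ge1$ before $\tau$. I set $T=1/|b|+1$.

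\emph{Step 4: robustness by contradiction.} Suppose that for $\epsilon_j\downarrow0$ some polygon \eqref{g:fixed:polygon} violates a conclusion. The possible violations are: no vertex of radius at least $2$ before the time bound, a preceding vertex with radius at most $1/2$ or angular value at least $b+\delta/2$, or an exit vertex with $\norm{x_k}>3$ or $J(x_k/\norm{x_k})>b-\delta/2$.

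Stop each polygon at the first such event, or at time $T+1$. The increments are bounded by $L+1$ times the mesh plus the noise, and $\sup\norm{e_k}\le\epsilon_j\to0$. So the piecewise linear interpolants are equi-Lipschitz up to $o(1)$ and stay in a compact annulus. By Arzel\`a--Ascoli a subsequence converges uniformly to a limit $x(\cdot)$. The standard convergence theorem for Euler schemes of upper semicontinuous convex-valued inclusions (the graph of $\mathcal V$ is closed, the field is bounded, and the approximations satisfy $v_k\in\mathcal V_{\epsilon_j}(x_k)$) shows that $\dot x\in-\mathcal V(x)$, with $\norm{x(0)}=1$ and $J(x(0))\le b$.

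Step 3 applies to this limit. It crosses radius $2$ at $\tau\le T-1$ with strict radial speed at least $|b|$. Before $\tau$ it has $\rho\ge1$ and $\theta\le b$, and at $\tau$ it has $\theta\le b-\delta$. Uniform convergence and continuity of $J$ on the annulus then show that, for large $j$, the polygon has all of the following: a first vertex of radius at least $2$ within mesh distance of $\tau$; radius at most $2+(L+1)\epsilon_j\le3$ there; angular value below $b-\delta/2$ there; and radius above $1/2$ and angular value below $b+\delta/2$ at every earlier vertex. This contradicts the assumed violation.

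Because the polygon is stopped at the first violation of the preceding bounds, only the field inclusion before that vertex is used, which is the final sentence of the lemma.
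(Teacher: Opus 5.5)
Your proposal matches the paper's proof step for step: the flow identity $\frac{d}{dt}J(u)=-\norm{v-J(u)u}^2/r$, a level $b<0$ that avoids the critical angular values and carries a band margin $\norm{v-J(u)u}\ge c$, the exact-flow exit estimate, and the Euler-scheme compactness contradiction (where, as the paper does, you should note that the limit of the stopped polygons solves the inclusion only up to $\sigma=\lim\sigma_j$, so an early violation is ruled out by evaluating at $x(\sigma)$). The one real difference is finiteness of the critical angular values: you appeal to a definable Sard theorem for conservative fields restricted to the sphere, while the paper argues directly that conservativity gives $\frac{d}{dt}J(u(t))=\ip{J(u)u}{\dot u}=0$ along paths in each of the finitely many definably path-connected components of $\{u\in\mathbb S^{d-1}:J(u)u\in\mathcal V(u)\}$, so $J$ is constant on each component; this needs no manifold version of Sard, and your stated fallback is not an independent argument because it also rests on Sard.
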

\begin{proof}
Let $x$ satisfy $\dot x=-v$, $v\in \mathcal V(x)$, on an interval where
$r=\norm{x}>0$, and put $u=x/r$.  Since $\mathcal V(x)=\mathcal V(u)$, the
conservative chain rule and \eqref{g:fixed:radial} give, almost
everywhere,
\begin{align*}
 \frac{d}{dt}J(x)&=\ip{v}{-v}=-\norm{v}^2,\\
 \dot r&=\ip{u}{-v}=-J(u),\\
 r\frac{d}{dt}J(u)
 &=\frac{d}{dt}J(x)-\dot rJ(u)
   =-\norm{v}^2+J(u)^2.
\end{align*}
Because $\ip{v}{u}=J(u)$ and $\norm{u}=1$, this is
\begin{equation}\label{g:fixed:flow}
 \dot r=-J(u),\qquad
 \frac{d}{dt}J(u)=-\frac{\norm{v-J(u)u}^2}{r}\le0.
\end{equation}

The set
\[
 \{u\in\mathbb S^{d-1}:J(u)u\in \mathcal V(u)\}
\]
is definable and has finitely many definably path connected
components \cite[Chapter~6, \S3]{vdd}. Along a piecewise $C^1$
path $u(t)$ in one component,
\[
 \frac{d}{dt}J(u(t))
 =\ip{J(u(t))u(t)}{\dot u(t)}
 =\frac{J(u(t))}{2}\frac{d}{dt}\norm{u(t)}^2=0
 \quad\text{almost everywhere}.
\]
Thus its image under $J$ is finite.  Choose
$b$ strictly between $\min_{\norm{u}=1}J(u)$ and $0$ and outside this finite set.
Compactness and the closed graph of $\mathcal V$ give $\delta_0>0$ and
$\mu>0$ such that $[b-2\delta_0,b+2\delta_0]\subset(-\infty,0)$
and
\[
 \norm{v-J(u)u}\ge\mu
 \quad\text{if }|J(u)-b|\le2\delta_0,\quad v\in \mathcal V(u).
\]
If this band has no points on the sphere, the assertion is vacuous
and any $\mu>0$ can be used.  Set
\[
 0<\delta\le\min\{\delta_0,\mu^2/(8L)\},
 \qquad T=2/(-b).
\]

For an exact solution with $r(0)=1$ and $J(u(0))\le b$,
\[
 J(u(t))\le b<0,
 \qquad -b\le\dot r(t)\le L,
 \qquad 1-bt\le r(t)\le1+Lt.
\]
Its first radius-two hit $\tau$ therefore satisfies
$1/L\le\tau\le1/(-b)<T$.  Once $J$ reaches $b-\delta$, it stays
below that value.  Otherwise $b-\delta<J(u(t))\le b$ for
$0\le t\le\tau$, and the band estimate gives
\[
 J(u(\tau))-J(u(0))
 \le-\int_0^\tau\frac{\mu^2}{r(t)}\,dt
 \le-\frac{\mu^2\tau}{2}
 \le-\frac{\mu^2}{2L}\le-4\delta.
\]
This contradicts $J(u(\tau))>b-\delta$.  Thus every exact solution
reaches radius two with $J\le b-\delta$.

Suppose no uniform tolerance exists. Take $\epsilon_j\downarrow0$ and
violating polygons. Stop them at the first hit, guard failure or
timeout, and extract $\sigma_j\to\sigma\le T$. Interpolate the
cumulative errors linearly and freeze both paths after stopping.
Ascoli gives a subsequence for which
\[
 z_j=x_j-e_j,\qquad \operatorname{Lip}(z_j)\le L+1,
 \qquad x_j,z_j\longrightarrow x\quad\text{uniformly}.
\]
The strict initial guard margins and uniform convergence imply
$\sigma>0$. For the preceding vertex $\bar x_j$ and every fixed
$t<\sigma$, for all large $j$,
\[
 \norm{z_j-\bar x_j}_{[0,t]}\le(L+4)\epsilon_j,
 \qquad \dot z_j\in-\mathcal V_{(L+5)\epsilon_j}(z_j).
\]
Faure--Roth~\cite[Corollary~4.10]{faure} applies to $-\mathcal V$:
it is bounded, with closed graph and nonempty compact convex values.
Enlarge the tolerance to $(L+6)\epsilon_j$ for their open graph neighborhoods.
The corollary gives $\dot x\in-\mathcal V(x)$ before $\sigma$.
The guards keep the limit away from $0$ there, so the exact-solution
bounds apply.

A violating subsequence and uniform convergence would give one of
\[
 \begin{gathered}
 r(\sigma)\le1/2,\qquad J(u(\sigma))\ge b+\delta/2,\\
 \sigma=T,\ r(T)\le2,\qquad
 r(\sigma)=2,\ J(u(\sigma))\ge b-\delta/2.
 \end{gathered}
\]
These contradict $r\ge1$, $J(u)\le b$, $r(T)\ge1-bT=3$, and the
strict angular estimate at the first radius-two hit, respectively.
The overshoot bound $(L+3)\epsilon_j<1$ also excludes \mbox{a terminal radius above three.}
\end{proof}

The deterministic part is now complete.  At physical radius $r$,
spatial rescaling by $r$ turns one step of
\eqref{g:fixed:chain} into an Euler step of duration $1/r$.
The cumulative centered noise over order $r$ steps has size
$o(r)$ with high probability.  We use this observation first to
continue outward phases, and then to rule out recurrent visits to
a bounded set.

\begin{proof}[Proof of Lemma~\ref{g:lem:fixed}]
Use the probability space $(\Omega,\mathcal F,\PP)$ and filtration
$(\mathcal F_n)$ carrying the auxiliary chain $Y_n$, its adapted
selections, and its noise sequence. All path expectations and
conditional expectations in this proof use $\PP$.
Let $M_2=\E\norm{\eta_1}^2=\tr\Sigma$. At a finite integer
stopping time $n$, future independence follows by conditioning on
$\{n=\ell\}$, $\ell=0,1,\ldots$.  In particular, for every bounded
Borel test $\phi$ and every fixed $k$,
\[
 \begin{aligned}
 &\E[\phi(\eta_{n+1},\ldots,\eta_{n+k})\mid\mathcal F_n]\\
 &\qquad=\int_{(\R^d)^k}\phi(z_1,\ldots,z_k)\,
                   \Gamma_\Sigma^{\otimes k}(d(z_1,\ldots,z_k)).
 \end{aligned}
\]
The right side is expectation on the $k$-noise product space and
is constant as a function of the observed history.
If $r=\norm{Y_n}\ge1$, put $m=\lceil Tr\rceil$.  Both are
$\mathcal F_n$-measurable.  \mbox{Centering and independence give}
\[
 \E\!\left[\left\|\sum_{i=1}^{m}\eta_{n+i}\right\|^2
                \middle|\mathcal F_n\right]=mM_2.
\]
The conditional maximal inequality in Lemma~\ref{g:lem:doob} therefore yields
\begin{equation}\label{g:fixed:noise}
 \begin{split}
 &\PP\left(
    \max_{0\le k\le m}\left\|r^{-1}\sum_{i=1}^k\eta_{n+i}\right\|>\epsilon
          \,\middle|\,\mathcal F_n\right)\\
 &\hspace{15mm}\le\frac{4mM_2}{\epsilon^2r^2}
 \le\frac{4M_2(Tr+1)}{\epsilon^2r^2}\le\frac Cr,
 \qquad C=\frac{4M_2(T+1)}{\epsilon^2}.
 \end{split}
\end{equation}
The inequality is applied conditionally for the known horizon $m$
and radius $r$.

Choose $R_0$ large enough that $1/R_0\le \epsilon$,
\eqref{g:fixed:inclusion} holds on $1/4\le\norm{u}\le4$ for every
scale $R\ge R_0$, and $2C/R_0\le1/2$.  Start a phase at any state
$y$ with radius $r\ge R_0$ and $J(y/r)\le b$.  The rescaled
vertices $x_k=Y_{n+k}/r$ obey \eqref{g:fixed:polygon}, with mesh
$1/r$ and cumulative perturbation
$e_k=r^{-1}\sum_{i=1}^k\eta_{n+i}$.  Until the stopping conditions
of Lemma~\ref{g:lem:outward}, the drift inclusion follows from
\eqref{g:fixed:inclusion}.

Let $B$ be the event that the actual path reaches its first vertex of
radius at least $2r$ within $m$ steps, before a guard violation, and that
this endpoint satisfies the scaled version of
\eqref{g:fixed:block}.  The end of the attempt is an integer stopping
time, and $B$ is measurable at that time.  Lemma~\ref{g:lem:outward}
and \eqref{g:fixed:noise} give
\[
 \begin{gathered}
 \left\{\max_{k\le m}\norm{e_k}\le \epsilon\right\}\subset B,\\
 \PP(B^c\mid\mathcal F_n)
 \le\PP\left(\max_{k\le m}\norm{e_k}>\epsilon\mid\mathcal F_n\right)
 \le\frac Cr.
 \end{gathered}
\]
Restart at the actual successful endpoint.  Its radius is in
$[2r,3r]$ and its angular value is at most $b$.  Conditioning uses
the actual endpoint sigma-field, rather than the sufficient
small-noise event over the full future horizon.

Write $\tau_j$ for the start of phase $j$, with $\tau_0=n$, and $B_j$
for its success event.  Put $G_0=\Omega$ and
$G_j=\bigcap_{i<j}B_i$.  After a failed phase, keep all subsequent
start times fixed at the failure time and declare their success
events empty.  Thus the $\tau_j$ are finite stopping times and
$G_j\in\mathcal F_{\tau_j}$.  Set $r_j=\norm{Y_{\tau_j}}$ on
$G_j$ and $r_j=1$ otherwise.  On $G_j$,
\[
 r_j\ge2^jr,\qquad
 \PP(B_j^c\mid\mathcal F_{\tau_j})\le\frac C{r_j}.
\]
The conditional probability of first failure in phase $j$ is bounded by
\begin{align*}
 \PP(G_j\cap B_j^c\mid\mathcal F_n)
 &=\E\!\left[
    \mathbf1_{G_j}\PP(B_j^c\mid\mathcal F_{\tau_j})
                 \middle|\mathcal F_n\right]\\
 &\le\E\!\left[\mathbf1_{G_j}\frac C{r_j}
                 \middle|\mathcal F_n\right]
 \le\frac C{2^jr}.
\end{align*}
The first-failure events are disjoint, so
\begin{equation}\label{g:fixed:phases}
 \begin{split}
 \PP\left(\bigcup_{j\ge0}(G_j\cap B_j^c)
                   \middle|\mathcal F_n\right)
 &\le\sum_{j=0}^{\infty}\frac C{2^jr}
 =\frac{2C}{r}\le\frac12,\\
 \PP\left(\bigcap_{j\ge0}B_j\middle|\mathcal F_n\right)
 &\ge1-\frac{2C}{r}\ge\frac12.
 \end{split}
\end{equation}
On $\bigcap_jB_j$, each phase has positive finite duration, and
\[
 \tau_j\ge n+j\longrightarrow\infty,\qquad
 \inf_{\tau_j\le k<\tau_{j+1}}\norm{Y_k}
 \ge\frac{r_j}{2}\ge2^{j-1}r\longrightarrow\infty.
\]
Thus all subsequent radii stay above $r/2$ and tend to infinity.
No independence of successive phases has been assumed.

It remains to reach those states from an arbitrary bounded region.
Fix an integer $M\ge1$.  Choose $u_*$ on the unit sphere with
$J(u_*)<b$.  A sufficiently small open ball $W$ around a sufficiently
large multiple of $u_*$ has
\[
 \norm{z}\ge R_0,\qquad \norm{z}>2M,
 \qquad J(z/\norm{z})<b\quad\text{for every }z\in W.
\]
Let $\varphi_\Sigma$ be the Gaussian density and let $|W|$ denote
the volume of $W$.  At any deterministic time $n$ with
$\norm{Y_n}\le M$,
\[
 \norm{-Y_n+V_n}\le M+L,\qquad
 W-Y_n+V_n\subset\overline W+\overline B(0,M+L).
\]
Continuity and strict positivity of the density on the latter
compact set give
\[
 q_M:=|W|\min_{z\in\overline W+\overline B(0,M+L)}
                  \varphi_\Sigma(z)>0.
\]
Consequently,
\begin{equation}\label{g:fixed:access}
 \begin{split}
 \PP(Y_{n+1}\in W\mid\mathcal F_n)
 &=\int_{W-Y_n+V_n}\varphi_\Sigma(z)\,dz\ge q_M\\
 &\hspace{20mm}\text{whenever }\norm{Y_n}\le M.
 \end{split}
\end{equation}
Let $\mathcal E_n$ be the event that all radii after time $n$ exceed
$M$ and tend to infinity.  On $\{Y_{n+1}\in W\}$, the starting
radius is greater than $2M$, so \eqref{g:fixed:phases} gives
$\PP(\mathcal E_n\mid\mathcal F_{n+1})\ge1/2$.
On $\{\norm{Y_n}\le M\}$, \mbox{the tower property yields}
\[
 \begin{split}
 \PP(\mathcal E_n\mid\mathcal F_n)
 &\ge\E\!\left[
     \mathbf1_{\{Y_{n+1}\in W\}}
     \PP(\mathcal E_n\mid\mathcal F_{n+1})
                      \middle|\mathcal F_n\right]\\
 &\ge\tfrac12\PP(Y_{n+1}\in W\mid\mathcal F_n)
 \ge q_M/2=:p_M.
 \end{split}
\]
Define
\[
 A_M=\{\norm{Y_n}\le M\text{ infinitely often}\},
 \qquad Z_n=\E[\mathbf1_{A_M}\mid\mathcal F_n].
\]
The event $A_M$ belongs to
$\mathcal F_\infty=\sigma(\bigcup_n\mathcal F_n)$.  Lemma~\ref{g:lem:upward} gives
\[
 Z_n\longrightarrow
 \E[\mathbf1_{A_M}\mid\mathcal F_\infty]
 =\mathbf1_{A_M}\quad\text{almost surely}.
\]
Since $A_M\cap\mathcal E_n=\varnothing$, the preceding conditional
bound gives
\[
 \norm{Y_n}\le M
 \quad\Longrightarrow\quad
 Z_n\le1-\PP(\mathcal E_n\mid\mathcal F_n)\le1-p_M.
\]
On $A_M$, this inequality holds infinitely often while $Z_n\to1$,
an impossibility.  Hence $\PP(A_M)=0$ for every integer $M\ge1$,
and
\[
 \PP(\norm{Y_n}\to\infty)
 =\PP\left(\bigcap_{M=1}^{\infty}A_M^c\right)=1.
\]

Finally, if a measurable time-homogeneous selection admitted an
invariant probability $\pi$, start the chain with law $\pi$ and
independent future noise vectors. For this final argument, take
\[
 \begin{aligned}
 \Omega_\pi&=\R^d\times(\R^d)^{\mathbb N},\qquad
 \mathcal F_\pi=\mathcal B(\R^d)\otimes
                         \mathcal B(\R^d)^{\otimes\mathbb N},\\
 \PP_\pi&=\pi\otimes\Gamma_\Sigma^{\otimes\mathbb N},
 \end{aligned}
\]
with $Y_0$ and the noises as coordinate projections. Invariance and bounded convergence
would imply
\[
 \begin{gathered}
 \pi(\overline B(0,M))
 =\PP_\pi(\norm{Y_n}\le M)\longrightarrow0
 \quad\text{for every }M,\\
 1=\lim_{M\to\infty}\pi(\overline B(0,M))=0.
 \end{gathered}
\]
This contradiction proves the last assertion.
\end{proof}

The absence of a stationary law is the conclusion used in
Lemma~\ref{g:prop:negative}: a stationary difference quotient function orbit cannot carry
a globally gapped normal component. The construction also identifies
the two probabilistic ingredients.
Centering and a second moment control accumulated perturbations within
an outward phase.  Gaussian full support supplies the one-step access
in \eqref{g:fixed:access}; a noise law supported in a small ball can
assign probability $0$ to that event.

\subsection{Proof of positive expected descent}\label{g:sec:energy}
On each stationary translation orbit, the inherited subgradient bound and Gaussian escape force a nonzero linear component. Stationarity cancels the normal component's expected increment, leaving mean descent equal to the squared norm of the linear coefficient. Averaging over orbits gives the positive lower bound in Lemma~\ref{g:prop:negative}.
\begin{proof}[Proof of Lemma~\ref{g:prop:negative}]
Proposition~\ref{g:prop:mixture} gives
$\mu=\int_\Theta\mu_\theta\,d\nu(\theta)$, where $\nu$ is a
probability on a parameter space $\Theta$ and each $\mu_\theta$ is
stationary on one translation orbit, the set of all recenterings of
a fixed difference quotient function. Fix $\theta\in\Theta$ and write
$H=H_\theta$. The measure $\mu_\theta$ is stationary and assigns
probability $1$ to translations of $H$, so Lemma~\ref{g:lem:normal}
applies. Explicitly, Lemma~\ref{g:lem:split} gives
\[
 S=\{p:T_pH=H\},\quad N=S^\perp,\qquad
 H(p+z)=\ip{c}{p}+K(z),\quad c\in S.
\]
For a linear subspace $A$, write $P_A$ for its orthogonal projection.
The notation $\nabla_NK$, $\pf^NK$, and $\pc^NK$ refers to the
gradient and subdifferentials computed within $N$.
The stationary orbit law induces a stationary normal position $Z$ for
\[
 Z^+=Z-\nabla_NK(Z)+P_N\eta.
\]
The word normal refers to the orthogonal complement of the translation
symmetries $S$. The construction and its measurability are proved in
Section~\ref{g:sec:orbits}.
Let $\pi_\theta$ denote this stationary distribution on $N$.
Throughout the calculation on this fixed orbit, $\E_\theta$ means
expectation on the probability space
\[
 \bigl(N\times\R^d,\,
       \mathcal B(N)\otimes\mathcal B(\R^d),\,
       \pi_\theta\otimes\Gamma_\Sigma\bigr),
 \qquad Z(z,e)=z,\quad \eta(z,e)=e.
\]
Thus $\theta$, $H$, $S$, $N$, $c$, and $K$ are fixed, while $Z$ and
$\eta$ are independent random variables. The updated $Z^+$ is
the function of these coordinates displayed above. Expectations
$\E_\eta$ involving only the noise use
$(\R^d,\mathcal B(\R^d),\Gamma_\Sigma)$.

We next show that the linear part satisfies $\norm{c}\ge a$.
Lemma~\ref{g:lem:profilegap} and \eqref{g:eq:subsplit} give
\begin{equation}\label{g:eq:normalgap}
 \norm{c}^2+\norm{w}^2\ge a^2
 \qquad(z\in N,\ w\in\pf^NK(z)).
\end{equation}
Suppose $\norm{c}<a$ and $\dim N>0$. Then
\[
 \inf\{\norm{w}:z\in N,\ w\in\pf^NK(z)\}
 \ge\sqrt{a^2-\norm{c}^2}>0.
\]
Let $D\subset N$ be the differentiability set of $K$. Rademacher's
theorem gives that $N\setminus D$ has Lebesgue measure $0$ in $N$,
and the ordinary gradient at every
point of $D$ belongs to the Clarke subdifferential. Definable choice
as used in Lemma~\ref{g:lem:orbits} supplies a Borel selection on the remaining
set, so there is a Borel map $w$ with
\[
 w(z)\in\pc^NK(z)\quad(z\in N),\qquad
 w(z)=\nabla_NK(z)\quad(z\in D).
\]
The stationary normal position in Lemma~\ref{g:lem:normal} satisfies
\[
 \pi_\theta(D)=1,\qquad
 \mathcal L\bigl(Z-w(Z)+P_N\eta\bigr)
 =\mathcal L(Z^+)=\mathcal L(Z).
\]
This is an invariant probability for a chain satisfying
Lemma~\ref{g:lem:fixed}, a contradiction. If $N=\{0\}$, then
$\pf^NK(0)=\{0\}$ and \eqref{g:eq:normalgap} directly gives
\begin{equation}\label{g:eq:linearbound}
 \norm{c}\ge a
\end{equation}
as well. Thus \eqref{g:eq:linearbound} holds on every stationary orbit
under consideration.

It remains to compute the expected descent.
By \eqref{g:eq:orbitincrement}, the decrease on this orbit is
\[
 (T_ZH)(0)-(T_ZH)(\Delta)
 =\norm{c}^2-\ip{c}{P_S\eta}+K(Z)-K(Z^+).
\]
In the stationary normal law,
\[
 K(Z^+)\overset{d}=K(Z),\qquad
 |K(Z)-K(Z^+)|
 \le L\norm{-\nabla_NK(Z)+P_N\eta}
 \le L(L+\norm{\eta})\in L^1.
\]
To take expectations without assuming $\E_\theta|K(Z)|<\infty$, define
$\chi_R(t)=\max\{-R,\min\{t,R\}\}$. Equal distributions and the
$1$-Lipschitz property of $\chi_R$ give
\begin{align*}
 \E_\theta\bigl[\chi_R(K(Z))-\chi_R(K(Z^+))\bigr]&=0,\\
 \bigl|\chi_R(K(Z))-\chi_R(K(Z^+))\bigr|
 &\le |K(Z)-K(Z^+)|.
\end{align*}
Dominated convergence therefore yields
\[
 \E_\theta[K(Z)-K(Z^+)]
 =\lim_{R\to\infty}\E_\theta\bigl[\chi_R(K(Z))-\chi_R(K(Z^+))\bigr]=0.
\]
The three terms in \eqref{g:eq:orbitincrement} now give
\begin{align*}
 \int_E d\,d\mu_\theta
 &=\norm{c}^2-\ip{c}{P_S\E_\eta\eta}
                 +\E_\theta[K(Z)-K(Z^+)]\\
 &=\norm{c}^2\ge a^2.
\end{align*}
Finally, $|d(H,v)|\le L(L+\E_\eta\norm{\eta})$ uniformly on $E$, so
\[
 \int_E d\,d\mu
 =\int_\Theta\left(\int_E d\,d\mu_\theta\right)d\nu(\theta)
 \ge a^2,
\]
which proves \eqref{g:eq:negative}.
\end{proof}

\subsection{Convergence windows and stopped averages}
\label{g:sec:windows}\label{g:sec:windowproofs}

Stopped occupation averages provide vanishing mean descent and approximate stationarity for the same sequence of probability measures. We construct them from finite windows of the original iteration, selecting the windows using past information and stopping at the first exit from a small ball. The two conclusions pass to one weak limit in the proof of Lemma~\ref{g:prop:zero}.

Throughout this section, the original random variables are defined on the
filtered probability space
\((\Omega,\mathcal F,(\mathcal F_n),\PP)\), and
\(\E Z=\int_\Omega Z(\omega)\,\PP(d\omega)\) for integrable \(Z\).
We display conditioning explicitly: \(\mathcal F_n\) records the available
history, while a past event \(B_j\in\mathcal F_{n_j}\) selects the windows
to be averaged.

The observation windows must contain many steps to make their endpoint terms
negligible, while remaining short relative to their starting times so that
their step lengths are nearly constant. Their spatial radius must also be
small relative to their total step length. The following choice meets all
three requirements.

\begin{lemma}\label{g:lem:window}
Assume \(\PP(X_n\to0)>0\). For each integer \(j\ge2\), there are increasing
deterministic times \(n_j\ge j^3\) and events \(B_j\in\mathcal F_{n_j}\)
with \(\PP(B_j)>0\) such that, on \(B_j\),
\begin{equation}\label{g:eq:stay}
 \norm{X_{n_j}}<j^{-2},\qquad
 \PP\bigl(\norm{X_n}<j^{-2}\text{ for all }n\ge n_j
              \mid\mathcal F_{n_j}\bigr)\ge1-\frac1j.
\end{equation}
For the window length and fixed scale
\begin{equation}\label{g:eq:windowparameters}
 m_j=\lceil n_j/j\rceil,\qquad s_j=\gamma_{n_j},
\end{equation}
one has
\begin{equation}\label{g:eq:windowlimits}
 m_j\to\infty,\qquad \frac{m_j}{n_j}\to0,\qquad
 \frac{j^{-2}}{m_js_j}\to0,\qquad
 \max_{n_j\le n\le n_j+m_j}
       \left|\frac{\gamma_n}{s_j}-1\right|\to0.
\end{equation}
\end{lemma}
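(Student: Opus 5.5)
The argument has two independent parts: a probabilistic selection of the times $n_j$ and events $B_j$, and a deterministic check of the asymptotics \eqref{g:eq:windowlimits} for the explicit choices \eqref{g:eq:windowparameters}. For the selection, fix $j\ge2$ and put
\[
A_j=\{\norm{X_n}<j^{-2}\text{ for all large }n\}.
\]
Then $\{X_n\to0\}\subset A_j$, so $\PP(A_j)>0$. Let $A_j^{(N)}=\{\norm{X_n}<j^{-2}\ \forall n\ge N\}$. These events increase to $A_j$, so some $N_0$ has $\PP(A_j^{(N_0)})>0$.

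For every $N\ge N_0$ the event $A_j^{(N)}$ lies in $\mathcal F_\infty$ and contains $A_j^{(N_0)}$. Rather than apply Lemma~\ref{g:lem:upward} to a moving event, I would use the fixed event $A_j^{(N_0)}$. By that lemma, $\PP(A_j^{(N_0)}\mid\mathcal F_n)\to\mathbf1_{A_j^{(N_0)}}$ almost surely. Hence on $A_j^{(N_0)}$, for all large $n$, we have both
\[
\PP(A_j^{(N_0)}\mid\mathcal F_n)\ge1-1/j
\quad\text{and}\quad
\norm{X_n}<j^{-2}.
\]
A union bound over $n$ gives a deterministic $n\ge\max\{N_0,j^3,n_{j-1}+1\}$ at which the $\mathcal F_n$-measurable event
\[
B_j=\{\norm{X_n}<j^{-2}\}\cap\{\PP(A_j^{(N_0)}\mid\mathcal F_n)\ge1-1/j\}
\]
has positive probability. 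Since $n\ge N_0$, we have $A_j^{(N_0)}\subset A_j^{(n)}$, so the conditional probability in \eqref{g:eq:stay} is at least $\PP(A_j^{(N_0)}\mid\mathcal F_n)\ge1-1/j$ on $B_j$. Set $n_j=n$. Choosing $n_j>n_{j-1}$ at each stage makes the sequence increasing. The only subtlety here is exactly this: fix the tail event first, then choose the conditioning time, so that convergence of conditional probabilities applies to a single event.

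The limits in \eqref{g:eq:windowlimits} are elementary.
\begin{itemize}
\item Since $m_j=\lceil n_j/j\rceil\ge n_j/j\ge j^2$, we get $m_j\to\infty$.
\item Since $m_j\le n_j/j+1$, we get $m_j/n_j\le1/j+1/n_j\to0$.
\item For the third limit, $m_js_j\ge\kappa(n_j/j)(n_j+n_0)^{-\alpha}$. Because $\alpha\le1$ and $n_j\ge1$, this is bounded below by $c\,n_j^{1-\alpha}/j\ge c/j$ for a constant $c>0$ depending on $\kappa,n_0$. Therefore $j^{-2}/(m_js_j)\le 1/(c\,j)\to0$. The requirement $n_j\ge j^3$ is not even needed here when $\alpha=1$; it gives extra room when $\alpha<1$.
\item For $n_j\le n\le n_j+m_j$, we have $\gamma_n/s_j=\bigl((n_j+n_0)/(n+n_0)\bigr)^\alpha$. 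This lies in $[(1+m_j/n_j)^{-\alpha},1]$, which tends to $1$ uniformly over the window because $m_j/n_j\to0$.
\end{itemize}

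I expect no real obstacle. The main care points are the measurability of $B_j$, since conditional probabilities are defined only almost surely (fix versions first), and keeping the choice of each $n_j$ deterministic.
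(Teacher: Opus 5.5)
Your proposal is correct and follows essentially the same route as the paper: fix one positive-probability tail event $A=\bigcap_{n\ge N}\{\norm{X_n}<j^{-2}\}$ and apply upward convergence $\PP(A\mid\mathcal F_n)\to\mathbf 1_A$. Then pick a deterministic late time $n_j\ge\max\{j^3,N\}$, exceeding the previous one, at which $\{\norm{X_{n_j}}<j^{-2}\}\cap\{\PP(A\mid\mathcal F_{n_j})>1-1/j\}$ has positive probability, and check the four limits by elementary step-size estimates. The only differences are cosmetic. The paper finds a good time via bounded convergence, $\PP(A\cap C_n)\to\PP(A)>0$, where $C_n$ is the event that both conditions hold at time $n$, whereas you use countable subadditivity over $\bigcap_{n\ge N'}C_n$. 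The paper also bounds $n_js_j$ from below using $n_j\ge n_0$ rather than your $n_j\ge1$.
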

\begin{proof}
Fix \(j\). The events
\(\bigcap_{n\ge N}\{\norm{X_n}<j^{-2}\}\) increase with \(N\), and their union
contains \(\{X_n\to0\}\). Hence, for some deterministic \(N\), the event
\[
 A=\bigcap_{n\ge N}\{\norm{X_n}<j^{-2}\}
\]
has positive probability. Since
\(A\in\sigma(\bigcup_n\mathcal F_n)\), upward convergence of conditional
expectations \cite[Theorems~4.6.8--4.6.9]{durrett} gives
\[
 \PP(A\mid\mathcal F_n)\longrightarrow\mathbf1_A
 \quad\text{almost surely}.
\]
On \(A\), both \(\norm{X_n}<j^{-2}\) and
\(\PP(A\mid\mathcal F_n)>1-1/j\) therefore hold for all sufficiently large \(n\).
Bounded convergence yields
\[
 \PP\bigl(A\cap\{\norm{X_n}<j^{-2},\
                 \PP(A\mid\mathcal F_n)>1-1/j\}\bigr)\longrightarrow\PP(A)>0.
\]
Choose an integer \(n_j\ge\max\{j^3,N,n_0\}\), larger than the preceding chosen
time, for which
\[
 B_j=\{\norm{X_{n_j}}<j^{-2},\
                \PP(A\mid\mathcal F_{n_j})>1-1/j\}
\]
has positive probability. This event belongs to \(\mathcal F_{n_j}\).
Moreover, \(A\) implies remaining in the ball from time \(n_j\) onward,
which proves \eqref{g:eq:stay}.

The deterministic estimates are
\begin{align*}
 m_j&\ge j^2\longrightarrow\infty,
 &\frac{m_j}{n_j}&\le\frac1j+\frac1{n_j}\longrightarrow0,\\
 n_js_j&=\kappa n_j(n_j+n_0)^{-\alpha}
       \ge\kappa2^{-\alpha}n_j^{1-\alpha}\ge\kappa2^{-\alpha},
 &\frac{j^{-2}}{m_js_j}
       &\le\frac1{jn_js_j}\le\frac{2^\alpha}{\kappa j}\longrightarrow0.
\end{align*}
Finally, since the steps decrease,
\[
 \max_{n_j\le n\le n_j+m_j}
       \left|\frac{\gamma_n}{s_j}-1\right|
 =1-\left(1+\frac{m_j}{n_j+n_0}\right)^{-\alpha}
 \le\frac{\alpha m_j}{n_j+n_0}\longrightarrow0.
\]
\end{proof}

The number of observed transitions grows, even though the original total step
length \(m_js_j\) need not grow. For \(\alpha=1\), it is of order \(1/j\).
The smaller radius \(j^{-2}\) ensures that endpoint value differences remain
negligible after division by \(m_js_j\).

For the selected event \(B_j\), we use the ordinary conditional average
\[
 \E[Z\mid B_j]=\frac{1}{\PP(B_j)}\int_{B_j}Z\,d\PP.
\]
This is defined whenever \(\int_{B_j}|Z|\,d\PP<\infty\).
Within these selected windows,
\(\E[Z\mid\mathcal F_n,B_j]\) further conditions on the history
\(\mathcal F_n\): it is the conditional expectation given \(\mathcal F_n\)
on \((\Omega,\mathcal F,\PP(\,\cdot\mid B_j))\).
Its identities are understood almost surely on \(B_j\).
For \(n\ge n_j\), the event \(B_j\) already belongs to \(\mathcal F_n\).
Independence of \(\eta_{n+1}\) from \(\mathcal F_n\) therefore gives, for
bounded Borel \(b\),
\[
 \E[b(\eta_{n+1})\mid\mathcal F_n,B_j]
 =\int_{\R^d}b(z)\,N(0,\Sigma)(dz).
\]
Thus future noise retains its Gaussian law after a window is selected,
even when the history up to the current step is given.

Stop recording at the first exit or at the end of the window:
\begin{equation}\label{g:eq:stopping}
 \sigma_j=
 \min\left(
   \{k\in\{1,\ldots,m_j\}:\norm{X_{n_j+k}}\ge j^{-2}\}
       \cup\{m_j\}\right).
\end{equation}
On \(B_j\), the recorded states have indices \(n_j\le n<n_j+\sigma_j\).
The terminal state \(X_{n_j+\sigma_j}\) may lie outside the ball.
It is excluded from the occupation average, but the transition reaching it
is included in both telescoping sums below; see Figure~\ref{g:fig:window}.
\begin{figure}[!htbp]
\centering
\begin{tikzpicture}[>=Stealth,font=\small,x=1cm,y=1cm]
 \draw[->,thick] (0,0)--(12.8,0) node[right] {$n$};
 \draw[dashed] (.2,0)--(3.2,0);
 \draw[blue!65!black,very thick] (3.4,0)--(11.8,0);
 \draw (3.4,.13)--(3.4,-.13) node[below] {$n_j$};
 \draw (11.8,.13)--(11.8,-.13) node[below] {$n_j+m_j$};
 \draw[orange!80!black,thick] (9.4,.13)--(9.4,-.13);
 \node[orange!70!black,above] at (9.4,.17) {$n_j+\sigma_j$};
 \node[above,align=center] at (3.4,.45) {condition on the past\\future stay is likely};
 \draw[<->] (3.4,1.25)--(11.8,1.25) node[midway,above] {$m_j\to\infty$,\quad $m_j/n_j\to0$};
 \node[below,align=center] at (6.5,-.65) {fixed scale $s_j=\gamma_{n_j}$:\quad $\gamma_{n_j+k}/s_j\to1$ uniformly\\
 endpoint contribution vanishes:\quad $j^{-2}/(m_js_j)\to0$};
\end{tikzpicture}
\caption{The stopped window used to obtain expected descent $0$. Average the states from $n_j$ to $n_j+\sigma_j-1$, where $\sigma_j$ is the first exit index or $m_j$. The final update may leave the ball. Horizontal lengths are schematic.}
\label{g:fig:window}
\end{figure}

Equation~\eqref{g:eq:stay} implies
\[
 \E[\sigma_j\mid B_j]
 \ge m_j\,\PP(\sigma_j=m_j\mid B_j)
 \ge(1-1/j)m_j
 \ge\frac{m_j}{2}\longrightarrow\infty.
\]
The expected number of records within a selected window therefore diverges.
\mbox{The second-moment constant satisfies}
\[
 M_2=\E\norm{\eta_1}^2=\tr\Sigma,\qquad
 \E[\norm{\eta_{n_j+i}}^2\mid B_j]=M_2\quad(i\ge1).
\]
The second equality again uses independence of future noise from \(B_j\).

For all sufficiently large \(j\),
\(\overline B(0,j^{-2})\subset K_0\). Lemma~\ref{g:lem:extension} then gives
\(V_n=g(X_n)\) at every recorded state on \(B_j\), almost surely.
Throughout a fixed window, we write its actual state explicitly as
\((H_{X_n,s_j},g(X_n))\), with \(n\) denoting the original iteration index.
Define a probability \(\nu_j\) on \(E\) through the following identity
for every bounded Borel test function \(\psi:E\to\R\):
\begin{equation}\label{g:eq:occupation}
 \int_E\psi(H,v)\,\nu_j(dH,dv)
 =\frac{\displaystyle
      \E\!\left[\sum_{n=n_j}^{n_j+\sigma_j-1}
              \psi(H_{X_n,s_j},g(X_n))\,\middle|\,B_j\right]}
        {\E[\sigma_j\mid B_j]}.
\end{equation}
Taking \(\psi=\mathbf1_D\) for Borel \(D\subset E\) defines a nonnegative
measure; countable additivity follows
from monotone convergence. Its mass is
\[
 \nu_j(E)
 =\frac{\E[\sum_{n=n_j}^{n_j+\sigma_j-1}1\mid B_j]}
        {\E[\sigma_j\mid B_j]}=1.
\]
This is the ratio of the expected count in each set of states to the expected
total count, both conditional on \(B_j\). It is not an average of separately
normalized sample paths. The same normalizing factor will apply to both
telescoping identities.

On \(B_j\), before termination, dividing the original update by the fixed scale gives
\[
 \frac{X_{n+1}-X_n}{s_j}
   =\frac{\gamma_n}{s_j}\bigl(-g(X_n)+\eta_{n+1}\bigr).
\]
The difference quotient identities are exact:
\[
 T_{(X_{n+1}-X_n)/s_j}H_{X_n,s_j}=H_{X_{n+1},s_j},\qquad
 \nabla H_{X_n,s_j}\!\left(\frac{X_{n+1}-X_n}{s_j}\right)=g(X_{n+1})
 \quad\text{almost surely}.
\]
The gradient on the right is defined from the global extension \(h\), so the
second identity remains valid if the next point leaves \(K_0\).
Thus the actual transition uses \(P_{\gamma_n/s_j}\); only its first step
has ratio exactly one.

The next lemma combines the transition telescope with its limiting
consequence. The sum of successive changes of a test function reduces to
two endpoint values. Their contribution vanishes after division by the
expected number of recorded steps, and stability of the kernel replaces
the actual ratios by one.

\begin{lemma}\label{g:lem:occupation}
For every \(\psi\in C(E)\), the probabilities in \eqref{g:eq:occupation} satisfy
\[
 \int_E\bigl[(P\psi)(H,v)-\psi(H,v)\bigr]\,\nu_j(dH,dv)\longrightarrow0.
\]
\end{lemma}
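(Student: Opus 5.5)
Fix $\psi\in C(E)$. Since $E$ is compact and bounded Lipschitz functions are uniformly dense in $C(E)$, and both $\psi\mapsto P\psi$ and $\psi\mapsto\int\psi\,d\nu_j$ are contractions in the supremum norm, it suffices to treat a fixed bounded continuous $\psi$ and track $\norm{\psi}_\infty$. The plan is a telescoping identity along each selected window, followed by a replacement of the actual step ratios $\lambda_n=\gamma_n/s_j$ by $1$.

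\textbf{Step 1 (telescoping).} Write $Y_n=(H_{X_n,s_j},g(X_n))$. On $B_j$, before termination, the exact identities displayed before the lemma show that $Y_{n+1}$ is obtained from $Y_n$ by the update \eqref{g:eq:kernel} with ratio $\lambda_n$ and noise $\eta_{n+1}$. This includes the final transition to $X_{n_j+\sigma_j}$, because the gradient comes from the global extension $h$. Since $B_j\in\mathcal F_n$ for $n\ge n_j$ and $\eta_{n+1}$ keeps its Gaussian law under $\PP(\cdot\mid\mathcal F_n,B_j)$, we get
\[
\E[\psi(Y_{n+1})\mid\mathcal F_n,B_j]=P_{\lambda_n}\psi(Y_n)
\]
almost surely on $B_j$. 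The event $\{n<n_j+\sigma_j\}$ is $\mathcal F_n$-measurable, since $\sigma_j$ is a stopping time relative to $n_j$. Multiply by this indicator, sum over $n$ from $n_j$ to $n_j+m_j-1$, and use the tower property to obtain
\[
\E\Bigl[\sum_{n=n_j}^{n_j+\sigma_j-1}\bigl(P_{\lambda_n}\psi(Y_n)-\psi(Y_n)\bigr)\Bigm|B_j\Bigr]
=\E\bigl[\psi(Y_{n_j+\sigma_j})-\psi(Y_{n_j})\bigm|B_j\bigr].
\]
The right side is bounded by $2\norm{\psi}_\infty$. All sums are finite, with at most $m_j$ terms, and all terms are bounded, so there are no integrability issues.

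\textbf{Step 2 (normalization).} Divide by $\E[\sigma_j\mid B_j]\ge m_j/2\to\infty$. The endpoint term is then $O(\norm{\psi}_\infty/m_j)\to0$.

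\textbf{Step 3 (ratio replacement).} By \eqref{g:eq:windowlimits}, $\max_n|\lambda_n-1|\to0$ over the window. The uniform kernel convergence \eqref{g:eq:uniformkernel}, which follows from the joint continuity in Proposition~\ref{g:prop:kernel} on the compact set $E\times[1/2,2]$, gives
\[
\sup_E|P_{\lambda_n}\psi-P\psi|\le\omega_\psi(\max_n|\lambda_n-1|)\to0.
\]
Since $\nu_j$ is a probability measure, replacing $P_{\lambda_n}$ by $P$ inside the normalized sum costs at most this supremum. The normalized left side of Step 1, with $P$ in place of $P_{\lambda_n}$, is exactly $\int(P\psi-\psi)\,d\nu_j$ by the definition \eqref{g:eq:occupation}. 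Hence $\int(P\psi-\psi)\,d\nu_j\to0$.

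\textbf{Main obstacle.} The step needing most care is the conditional identity in Step 1 under $\PP(\cdot\mid B_j)$. One must confirm three things:
\begin{itemize}[nosep]
\item the recorded vector equals $g(X_n)$ almost surely (Lemma~\ref{g:lem:extension}, using $\overline B(0,j^{-2})\subset K_0$);
\item the map $(y,\lambda,z)\mapsto$ next state is jointly Borel (Lemma~\ref{g:lem:measurable_gradient}), so the freezing lemma applies to integrate out $\eta_{n+1}$;
\item the next gradient $\nabla H_{X_n,s_j}(\Delta)$ agrees with $g(X_{n+1})$ almost surely, which holds because $\Delta$ has a density.
\end{itemize}
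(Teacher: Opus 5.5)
Your proposal is correct and follows essentially the same route as the paper: the stopped one-step conditional identity under $\PP(\cdot\mid B_j)$, telescoping over the deterministic indices to two bounded endpoint values, normalization by $\E[\sigma_j\mid B_j]\ge m_j/2$, and replacement of the ratios $\gamma_n/s_j$ by $1$ through the joint continuity in Proposition~\ref{g:prop:kernel}. Two small remarks. The opening reduction to Lipschitz tests is unnecessary, since that proposition already gives continuity of $P_\lambda\psi$ for every continuous $\psi$. The one-step identity is valid only on $\{n<n_j+\sigma_j\}$, where $V_n=g(X_n)$, which your argument respects once you insert the stopping indicator.
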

\begin{proof}
Fix \(j\) sufficiently large. For \(n_j\le n<n_j+m_j\),
\[
 \{n<n_j+\sigma_j\}
   =\bigcap_{\ell=n_j+1}^{n}\{\norm{X_\ell}<j^{-2}\}
       \in\mathcal F_n,
\]
where the intersection is empty when \(n=n_j\).
The stopping indicator is therefore known at time \(n\).
Recall that \(P=P_1\), and that, for a fixed state \((H,v)\),
\[
 (P_\lambda\psi)(H,v)
 =\int_{\R^d}
   \psi\bigl(T_{\lambda(-v+z)}H,\nabla H(\lambda(-v+z))\bigr)
      \,N(0,\Sigma)(dz).
\]
Thus \(P_\lambda\psi\) averages the test function after one update of ratio
\(\lambda\). Given \(\mathcal F_n\) within \(B_j\), the current state
\((H_{X_n,s_j},g(X_n))\) and the stopping indicator are fixed, while
\(\eta_{n+1}\) still has law \(N(0,\Sigma)\).
Before termination, the update and recentering identities above identify
the next state with the two arguments of \(\psi\) in this integral, with
\(H=H_{X_n,s_j}\), \(v=g(X_n)\), and \(\lambda=\gamma_n/s_j\).
Integrating over the next noise and taking the known indicator outside
the conditional expectation therefore gives
\begin{align*}
 &\E\!\left[
   \mathbf1_{\{n<n_j+\sigma_j\}}
     \psi(H_{X_{n+1},s_j},g(X_{n+1}))
       \,\middle|\,\mathcal F_n,B_j\right]\\
 &\qquad=
 \mathbf1_{\{n<n_j+\sigma_j\}}
 \int_{\R^d}
   \psi\Bigl(
       T_{\frac{\gamma_n}{s_j}(-g(X_n)+z)}H_{X_n,s_j},\\
 &\hspace{9em}
       \nabla H_{X_n,s_j}\bigl(\tfrac{\gamma_n}{s_j}(-g(X_n)+z)\bigr)
       \Bigr)\,N(0,\Sigma)(dz)\\
 &\qquad=
 \mathbf1_{\{n<n_j+\sigma_j\}}
     (P_{\gamma_n/s_j}\psi)(H_{X_n,s_j},g(X_n)).
\end{align*}
The first equality integrates the next noise, replacing \(\eta_{n+1}\)
by the integration variable \(z\); the second is exactly the definition
of \(P_\lambda\psi\) with the three substitutions above.
On the event that recording has already stopped, both sides vanish.
Only the next noise is averaged in this conditional identity; the right-hand
side is still a function of the current random state.
The tower property used here is
\[
 \E\!\left[\E[Z\mid\mathcal F_n,B_j]\,\middle|\,B_j\right]
       =\E[Z\mid B_j],
\]
for every \(Z\) integrable under \(\PP(\,\cdot\mid B_j)\).
Both expectations use this same conditional probability measure: the inner
expectation averages given the current history, and the outer expectation
averages over that history within \(B_j\). Applying this identity to the
stopped next-state test value gives, \mbox{for each fixed \(n\),}
\begin{align*}
 &\E\!\left[\mathbf1_{\{n<n_j+\sigma_j\}}
     (P_{\gamma_n/s_j}\psi)(H_{X_n,s_j},g(X_n))\,\middle|\,B_j\right]\\
 &=\E\!\left[
     \E\!\left[\mathbf1_{\{n<n_j+\sigma_j\}}
       \psi(H_{X_{n+1},s_j},g(X_{n+1}))
         \,\middle|\,\mathcal F_n,B_j\right]\,\middle|\,B_j\right]\\
 &=\E\!\left[\mathbf1_{\{n<n_j+\sigma_j\}}
       \psi(H_{X_{n+1},s_j},g(X_{n+1}))\,\middle|\,B_j\right].
\end{align*}
Subtract the conditional average of the stopped current test value from both
sides and sum over the deterministic indices \(n_j\le n<n_j+m_j\).
The indicators restrict each sum to \(n<n_j+\sigma_j\), giving
\begin{align*}
 &\E\!\left[\sum_{n=n_j}^{n_j+\sigma_j-1}
   \bigl[(P_{\gamma_n/s_j}\psi)(H_{X_n,s_j},g(X_n))
                 -\psi(H_{X_n,s_j},g(X_n))\bigr]\,\middle|\,B_j\right]\\
 &=\E\!\left[\sum_{n=n_j}^{n_j+\sigma_j-1}
   \bigl[\psi(H_{X_{n+1},s_j},g(X_{n+1}))
                 -\psi(H_{X_n,s_j},g(X_n))\bigr]\,\middle|\,B_j\right]\\
 &=\E\!\left[
    \psi(H_{X_{n_j+\sigma_j},s_j},g(X_{n_j+\sigma_j}))
                 -\psi(H_{X_{n_j},s_j},g(X_{n_j}))\,\middle|\,B_j\right].
\end{align*}
All terms are integrable because \(\psi\) is bounded.
Hence
\begin{equation}\label{g:eq:stationarytelescope}
 \begin{split}
 &\frac{\displaystyle
  \E\!\left[\sum_{n=n_j}^{n_j+\sigma_j-1}
   \bigl[(P_{\gamma_n/s_j}\psi)(H_{X_n,s_j},g(X_n))
                 -\psi(H_{X_n,s_j},g(X_n))\bigr]\,\middle|\,B_j\right]}
 {\E[\sigma_j\mid B_j]}\\
 &\quad=
 \frac{\displaystyle
  \E\!\left[
    \psi(H_{X_{n_j+\sigma_j},s_j},g(X_{n_j+\sigma_j}))
                 -\psi(H_{X_{n_j},s_j},g(X_{n_j}))\,\middle|\,B_j\right]}
 {\E[\sigma_j\mid B_j]}.
 \end{split}
\end{equation}
This identity holds for every bounded Borel test function.
Its right-hand side is bounded \mbox{in absolute value by}
\[
 \frac{2\norm{\psi}_\infty}{\E[\sigma_j\mid B_j]}
       \le\frac{4\norm{\psi}_\infty}{m_j}\longrightarrow0.
\]
Here \(\norm{\psi}_\infty=\sup_{(H,v)\in E}|\psi(H,v)|\).
For the fixed continuous \(\psi\) in the statement,
\eqref{g:eq:occupation} and \eqref{g:eq:stationarytelescope} imply
\begin{align*}
 \left|\int_E(P\psi-\psi)\,d\nu_j\right|
 &\le\frac{4\norm{\psi}_\infty}{m_j}\\
 &\quad+\max_{n_j\le n<n_j+m_j}
       \sup_{(H,v)\in E}
       |(P_{\gamma_n/s_j}\psi)(H,v)-(P\psi)(H,v)|.
\end{align*}
The second term tends to zero by \eqref{g:eq:windowlimits} and the joint
continuity in Proposition~\ref{g:prop:kernel}, applied on a compact interval of
ratios containing one. This proves the conclusion.
\end{proof}

Lemma~\ref{g:lem:occupation} gives approximate stationarity of the occupation
measures. The value telescope must now be controlled for these same measures.
Unlike the bounded test function above, \(h\) need not be bounded globally.
We therefore use its Lipschitz bound and account explicitly for the last
step, whose endpoint may be outside the ball.

\begin{lemma}\label{g:lem:occupation-descent}
The same probabilities \(\nu_j\) satisfy
\[
 \int_E d(H,v)\,\nu_j(dH,dv)\longrightarrow0.
\]
\end{lemma}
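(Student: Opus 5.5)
The plan mirrors the proof of Lemma~\ref{g:lem:occupation}, now with the unbounded but Lipschitz test given by the values of $h$. The first task is a one-step identity. Fix $n_j\le n<n_j+m_j$ and work on $B_j$ with the indicator $\mathbf1_{\{n<n_j+\sigma_j\}}\in\mathcal F_n$. Before termination, $X_{n+1}-X_n=\gamma_n(-g(X_n)+\eta_{n+1})$. By \eqref{g:eq:physical-displacement} and \eqref{g:eq:scaled-descent}, integrating out $\eta_{n+1}$ gives
\[
 \E\!\left[\mathbf1_{\{n<n_j+\sigma_j\}}\frac{h(X_n)-h(X_{n+1})}{s_j}\,\middle|\,\mathcal F_n,B_j\right]
 =\mathbf1_{\{n<n_j+\sigma_j\}}\,d_{\gamma_n/s_j}(H_{X_n,s_j},g(X_n)).
\]
Integrability is not an issue, since $|h(X_n)-h(X_{n+1})|\le L\gamma_n(L+\norm{\eta_{n+1}})$ and the noise has a finite second moment.

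The next step is to sum over the deterministic indices and apply the tower property under $\PP(\cdot\mid B_j)$. The sum of value decreases telescopes to
\[
 \E\!\left[\sum_{n=n_j}^{n_j+\sigma_j-1}d_{\gamma_n/s_j}(H_{X_n,s_j},g(X_n))\,\middle|\,B_j\right]
 =\frac{\E[h(X_{n_j})-h(X_{n_j+\sigma_j})\mid B_j]}{s_j}.
\]
The endpoint term must then be controlled. Write $X_{n_j+\sigma_j}=X_{n_j+\sigma_j-1}+\gamma(-g+\eta)$. Both $X_{n_j}$ and $X_{n_j+\sigma_j-1}$ lie in $B(0,j^{-2})$, so
\[
 |h(X_{n_j})-h(X_{n_j+\sigma_j})|\le 2Lj^{-2}+L\gamma_{n_j}\bigl(L+\max_{1\le i\le m_j}\norm{\eta_{n_j+i}}\bigr).
\]
To bound the last term I will use the crude estimate
\[
 \E[\max_{i\le m_j}\norm{\eta_{n_j+i}}\mid B_j]\le\Bigl(\E\bigl[\textstyle\sum_{i\le m_j}\norm{\eta_{n_j+i}}^2\bigm| B_j\bigr]\Bigr)^{1/2}=\sqrt{m_jM_2},
\]
which relies on future noise being independent of $B_j$. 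Alternatively, $\norm{\eta_{n_j+\sigma_j}}$ can be handled directly via the sum $\sum_i\mathbf1_{\{\sigma_j=i\}}\norm{\eta_{n_j+i}}$, and I will fall back to this if the crude bound is too weak. Dividing by $s_j\E[\sigma_j\mid B_j]\ge s_jm_j/2$ bounds the normalized endpoint contribution by
\[
 \frac{4Lj^{-2}}{m_js_j}+\frac{2L\gamma_{n_j}(L+\sqrt{m_jM_2})}{s_jm_j},
\]
and both terms tend to $0$ by \eqref{g:eq:windowlimits} (note $\gamma_{n_j}=s_j$). This is the step I expect to be the main obstacle: the last update may leave the ball and involves an unbounded noise, so the key inequality $j^{-2}/(m_js_j)\to0$ has to be paired with a moment bound on a single stopped noise.

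Finally, I replace $d_{\gamma_n/s_j}$ by $d=d_1$. By \eqref{g:eq:uniformkernel}, $\norm{d_\lambda-d}_\infty\to0$ as $\lambda\to1$, and by \eqref{g:eq:windowlimits} the ratios $\gamma_n/s_j$ converge to $1$ uniformly over the window. Combining this with the definition \eqref{g:eq:occupation} gives
\[
 \left|\int_E d\,d\nu_j\right|\le(\text{endpoint term})+\max_{n_j\le n<n_j+m_j}\norm{d_{\gamma_n/s_j}-d}_\infty\longrightarrow0,
\]
which is the claim.
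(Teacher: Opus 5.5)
Your proposal is correct and follows the paper's proof essentially step for step. You use the same stopped one-step identity, the same value telescope under $\PP(\cdot\mid B_j)$, the same endpoint bound via Cauchy--Schwarz (yielding $\frac{4Lj^{-2}}{m_js_j}+\frac{2L^2}{m_j}+\frac{2L\sqrt{M_2}}{\sqrt{m_j}}$), and the same replacement of the ratios $\gamma_n/s_j$ by one. For that last step the paper uses the explicit estimate $|d_\lambda-d|\le L(L+\sqrt{M_2})|\lambda-1|$ where you cite \eqref{g:eq:uniformkernel}, which is equally valid. The crude $\sqrt{m_jM_2}$ bound already suffices, so your fallback is unnecessary.
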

\begin{proof}
Fix \(j\) sufficiently large. For every pre-termination update on \(B_j\), the definition
of the actual difference quotient gives
\[
 H_{X_n,s_j}(0)
  -H_{X_n,s_j}\!\left(\frac{X_{n+1}-X_n}{s_j}\right)
       =\frac{h(X_n)-h(X_{n+1})}{s_j}.
\]
Recall that \(d=d_1\), where
\[
 d_\lambda(H,v)
 =\int_{\R^d}
    [H(0)-H(\lambda(-v+z))]\,N(0,\Sigma)(dz).
\]
This is the expected decrease of \(H\) after one update of ratio
\(\lambda\) from the fixed state \((H,v)\).
Before termination,
\((X_{n+1}-X_n)/s_j=(\gamma_n/s_j)(-g(X_n)+\eta_{n+1})\).
Given \(\mathcal F_n\) within \(B_j\), the current state is fixed and the
next noise still has law \(N(0,\Sigma)\). Substituting this displacement
in the difference quotient identity above and integrating over the noise
therefore gives \(d_{\gamma_n/s_j}(H_{X_n,s_j},g(X_n))\).
The stopping indicator is \(\mathcal F_n\)-measurable, so it can be taken
outside the conditional expectation:
\begin{align*}
 &\E\!\left[
 \mathbf1_{\{n<n_j+\sigma_j\}}
       \frac{h(X_n)-h(X_{n+1})}{s_j}
       \,\middle|\,\mathcal F_n,B_j\right]\\
 &\qquad=
 \mathbf1_{\{n<n_j+\sigma_j\}}
 \int_{\R^d}\Bigl[
   H_{X_n,s_j}(0)\\
 &\hspace{9em}
   -H_{X_n,s_j}\bigl(\tfrac{\gamma_n}{s_j}(-g(X_n)+z)\bigr)
   \Bigr]\,N(0,\Sigma)(dz)\\
 &\qquad=
 \mathbf1_{\{n<n_j+\sigma_j\}}
       d_{\gamma_n/s_j}(H_{X_n,s_j},g(X_n)).
\end{align*}
The first equality uses the exact difference quotient identity and the
conditional Gaussian law. The second uses the definition of \(d_\lambda\)
with \(H=H_{X_n,s_j}\), \(v=g(X_n)\), and \(\lambda=\gamma_n/s_j\).
When recording has already stopped, both sides are zero.
The stopped terms are integrable under \(\PP(\,\cdot\mid B_j)\):
\mbox{on \(B_j\), before termination,}
\[
 \frac{|h(X_n)-h(X_{n+1})|}{s_j}
  \le L\frac{\gamma_n}{s_j}
         (L+\norm{\eta_{n+1}})
  \le L(L+\norm{\eta_{n+1}}).
\]
Apply the same tower property to the stopped value difference. The preceding
integrability bound justifies both conditional averages, and for each fixed
\(n\) it gives
\begin{align*}
 &\E\!\left[\mathbf1_{\{n<n_j+\sigma_j\}}
       d_{\gamma_n/s_j}(H_{X_n,s_j},g(X_n))\,\middle|\,B_j\right]\\
 &=\E\!\left[
     \E\!\left[\mathbf1_{\{n<n_j+\sigma_j\}}
       \frac{h(X_n)-h(X_{n+1})}{s_j}
         \,\middle|\,\mathcal F_n,B_j\right]\,\middle|\,B_j\right]\\
 &=\frac1{s_j}\E\!\left[\mathbf1_{\{n<n_j+\sigma_j\}}
       [h(X_n)-h(X_{n+1})]\,\middle|\,B_j\right].
\end{align*}
Summing over \(n_j\le n<n_j+m_j\), then using the stopping indicators
to write the \mbox{finite stopped sums, gives}
\begin{align*}
 &\E\!\left[\sum_{n=n_j}^{n_j+\sigma_j-1}
          d_{\gamma_n/s_j}(H_{X_n,s_j},g(X_n))\,\middle|\,B_j\right]\\
 &=\frac1{s_j}\E\!\left[
       \sum_{n=n_j}^{n_j+\sigma_j-1}[h(X_n)-h(X_{n+1})]
       \,\middle|\,B_j\right]\\
 &=\frac1{s_j}\E\!\left[
       h(X_{n_j})-h(X_{n_j+\sigma_j})\,\middle|\,B_j\right].
\end{align*}
Consequently,
\begin{equation}\label{g:eq:valuetelescope}
 \begin{split}
 &\frac{\displaystyle
 \E\!\left[\sum_{n=n_j}^{n_j+\sigma_j-1}
       d_{\gamma_n/s_j}(H_{X_n,s_j},g(X_n))\,\middle|\,B_j\right]}
 {\E[\sigma_j\mid B_j]}\\
 &\qquad=
 \frac{\E[h(X_{n_j})-h(X_{n_j+\sigma_j})\mid B_j]}
      {s_j\E[\sigma_j\mid B_j]}.
 \end{split}
\end{equation}

On \(B_j\), both the initial point and the point just before termination lie
in the ball. Including the last update, and using decreasing steps, gives
almost surely on this event
\begin{align*}
 \norm{X_{n_j+\sigma_j}-X_{n_j}}
 &\le\norm{X_{n_j+\sigma_j}-X_{n_j+\sigma_j-1}}
          +\norm{X_{n_j+\sigma_j-1}}+\norm{X_{n_j}}\\
 &\le\gamma_{n_j+\sigma_j-1}
             (L+\norm{\eta_{n_j+\sigma_j}})+2j^{-2}\\
 &\le s_j\left(L+\max_{1\le i\le m_j}
                         \norm{\eta_{n_j+i}}\right)+2j^{-2}.
\end{align*}
Cauchy--Schwarz under \(\PP(\,\cdot\mid B_j)\) and the unchanged second
moments give
\begin{align*}
 \E\!\left[
       \max_{1\le i\le m_j}\norm{\eta_{n_j+i}}\,\middle|\,B_j\right]
 &\le\left(
       \E\!\left[\sum_{i=1}^{m_j}
                       \norm{\eta_{n_j+i}}^2\,\middle|\,B_j\right]\right)^{1/2}\\
 &=\sqrt{m_jM_2}.
\end{align*}
Thus the Lipschitz bound and
\(\E[\sigma_j\mid B_j]\ge m_j/2\) yield
\begin{equation}\label{g:eq:boundarybound}
 \begin{split}
 \left|
 \frac{\E[h(X_{n_j})-h(X_{n_j+\sigma_j})\mid B_j]}
      {s_j\E[\sigma_j\mid B_j]}\right|
 &\le\frac{L}{\E[\sigma_j\mid B_j]}
       \left(\frac{2j^{-2}}{s_j}+L+\sqrt{m_jM_2}\right)\\
 &\le\frac{4L j^{-2}}{m_js_j}
      +\frac{2L^2}{m_j}
      +\frac{2L\sqrt{M_2}}{\sqrt{m_j}}
      \longrightarrow0.
 \end{split}
\end{equation}
The first term vanishes by the spatial-scale condition in
\eqref{g:eq:windowlimits}; the remaining terms control the possible exit step
using only the second moment of the noise.

Finally, uniformly over \((H,v)\in E\),
\begin{align*}
 |d_\lambda(H,v)-d(H,v)|
 &\le L|\lambda-1|
       \int_{\R^d}\norm{-v+z}\,N(0,\Sigma)(dz)\\
 &\le L(L+\sqrt{M_2})|\lambda-1|.
\end{align*}
Applying this estimate to the recorded states in \eqref{g:eq:occupation},
and then using \eqref{g:eq:valuetelescope}--\eqref{g:eq:boundarybound}, gives
\begin{align*}
 \left|\int_Ed(H,v)\,\nu_j(dH,dv)\right|
 &\le L(L+\sqrt{M_2})
       \max_{n_j\le n<n_j+m_j}\left|\frac{\gamma_n}{s_j}-1\right|\\
 &\quad+\frac{4L j^{-2}}{m_js_j}
       +\frac{2L^2}{m_j}
       +\frac{2L\sqrt{M_2}}{\sqrt{m_j}}\longrightarrow0.
\end{align*}
\end{proof}

The two lemmas concern the same occupation measures: one controls their
change under the unit-step kernel, and the other controls their mean descent
for that kernel. Compactness now provides a common limit with both properties.

\subsection{Proof of zero expected descent}\label{g:sec:convergence}

Compactness of the state space yields a common weak limit of the stopped occupation probabilities. Shrinking centers and scales place its support in \(\mathcal C\times\overline B(0,L)\), and continuity of \(d\) and \(P\psi\) transfers both conclusions of Lemmas~\ref{g:lem:occupation-descent} and~\ref{g:lem:occupation}: mean descent \(0\) and stationarity.

\begin{proof}[Proof of Lemma~\ref{g:prop:zero}]
Use the windows from Lemma~\ref{g:lem:window} and the probabilities
\(\nu_j\) in \eqref{g:eq:occupation}. The state space \(E\) is compact by
Lemma~\ref{g:lem:profilecompactness}. Prohorov's theorem
\cite[Theorem~5.1]{billingsley} therefore gives a subsequence, still indexed
by \(j\), for which \(\nu_j\Rightarrow\mu\).

Let \(\operatorname{dist}(H,\mathcal C)=\inf_{J\in\mathcal C}d_0(H,J)\),
where \(d_0\) is the metric for local uniform convergence defined earlier.
We first show
\[
 \sup_{\norm{x}<j^{-2}}
       \operatorname{dist}(H_{x,s_j},\mathcal C)\longrightarrow0.
\]
Otherwise, along a subsequence there would be centers
\(\norm{x_j}<j^{-2}\) whose corresponding distances exceed a fixed
\(\epsilon>0\). Compactness of \(\mathcal D\) gives a further locally uniformly
convergent subsequence of \(H_{x_j,s_j}\). Since \(x_j\to0\) and
\(s_j=\gamma_{n_j}\to0\), its limit belongs to \(\mathcal C\), contradicting
the distance bound.
All states entering \(\nu_j\) have such centers. Since the distance function
is continuous and bounded,
\[
 \int_E\operatorname{dist}(H,\mathcal C)\,\mu(dH,dv)
  =\lim_j\int_E\operatorname{dist}(H,\mathcal C)\,\nu_j(dH,dv)=0.
\]
The integrand is nonnegative and \(\mathcal C\) is closed, so
\(\mu(\mathcal C\times\overline B(0,L))=1\).

For \(\psi\in C(E)\), Proposition~\ref{g:prop:kernel} gives \(P\psi\in C(E)\).
Lemma~\ref{g:lem:occupation} and weak convergence imply
\[
 \int_E(P\psi-\psi)\,d\mu
       =\lim_j\int_E(P\psi-\psi)\,d\nu_j=0.
\]
Continuous tests determine Borel probabilities on the compact metric space
\(E\), hence \(\mu P=\mu\).

The function \(d\) is also bounded and continuous by
Proposition~\ref{g:prop:kernel}. For the same weak limit,
Lemma~\ref{g:lem:occupation-descent} gives
\[
 \int_E d(H,v)\,\mu(dH,dv)
       =\lim_j\int_E d(H,v)\,\nu_j(dH,dv)=0.
\]
This proves Lemma~\ref{g:prop:zero}.
\end{proof}

In the proof of the avoidance theorem, the positive-descent result applies
to every stationary probability on this same limiting state space.
It therefore applies to the law constructed here, whose mean descent is zero;
no uniqueness of the stationary law is needed.

\begin{proof}[Proof of Theorem~\ref{g:thm:main}]
Suppose $\PP(X_n\to0)>0$. Take the stationary law $\mu$ supplied by
Lemma~\ref{g:prop:zero}. Lemma~\ref{g:prop:negative} gives
\[
 0=\int d(H,v)\,d\mu(H,v)\ge a^2>0,
\]
a contradiction. Translating back to $x_*$ proves the theorem.
\end{proof}

\subsection{Finite sets and generic linear tilts}\label{g:sec:scope}
\begin{proof}[Proof of Corollary~\ref{g:cor:finite-generic}]
For finite $A$, apply Theorem~\ref{g:thm:main} to each point and take
a finite union. For the generic statement, observe that
$\{x:0\in\pc f_u(x)\}=\{x:u\in\pc f(x)\}$.
The definable graph of $\pc f$ has dimension $d$, so its nonempty
fibers over $u$ are $0$-dimensional, hence finite, outside a
definable set of dimension less than $d$
\cite[Theorem~3.5 and Corollary~4.4]{dl}; the extension to definable
functions is stated in that paper's introduction. Thus $R_u$ is
finite, and every $x\in R_u$ satisfies $0\notin\pl f_u(x)$.
The first assertion applied to $f_u$ completes the proof.
\end{proof}

Definability alone does not ensure finiteness of the Clarke critical
set: $f(x_1,x_2)=x_1^2$ has the critical line $\{0\}\times\R$.
The generic tilt supplies the finiteness used above. The definition
of sharply repulsive points is given in Appendix~\ref{p:sec:active}.
The corollary addresses sharply repulsive-point avoidance; general
active-strict-saddle avoidance and generic convergence to local minima
require further conclusions.

\section{Experimental details and additional results}\label{app:experiments}
We supplement the noise comparison in Section~\ref{sec:experiments} with individual trajectories and escape counts, using the same objective, initialization, step sizes, and covariance $\sigma^2I_2$, where $\sigma=0.15$. Uniform noise is sampled by area on the disk of radius $2\sigma$; Gaussian, logistic, and Laplace coordinates are independent, with standard deviation $\sigma$ and respective logistic and Laplace scales $\sigma\sqrt3/\pi$ and $\sigma/\sqrt2$.

Figure~\ref{fig:fixed-trajectories} shows the first eight trajectory indices from each of the four repeats, giving $32$ paths per law. These indices were fixed independently of the outcomes. The individual paths complement the aggregate statistics in Figure~\ref{fig:experiments} by showing the variation in escape times. The dashed line marks radius $0.05$.

\begin{figure}[htbp]
\centering
\includegraphics[width=\linewidth]{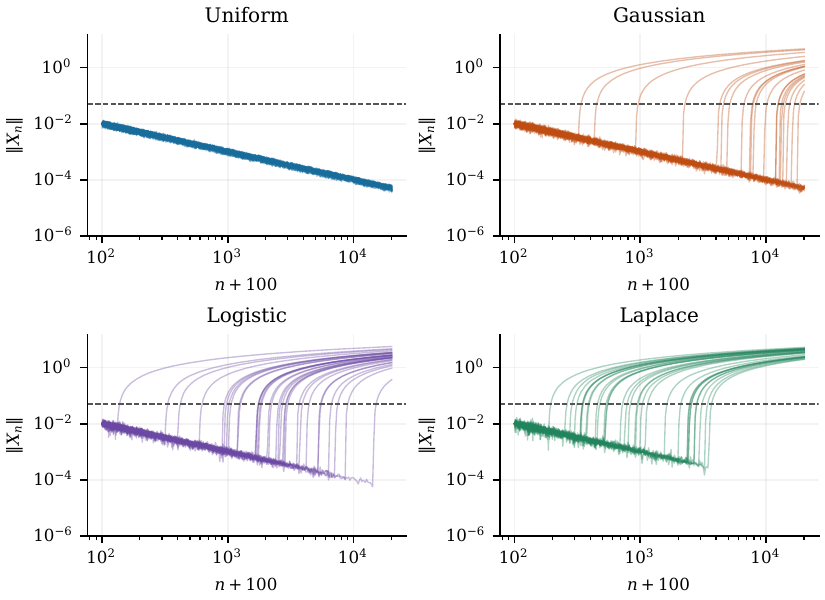}
\caption{Individual trajectories under the four noise laws. Each panel shows $32$ paths with indices fixed in advance, plotted at the recorded time indices. The dashed line marks radius $0.05$.}
\label{fig:fixed-trajectories}
\end{figure}

Table~\ref{tab:escape-counts} counts trajectories satisfying $\norm{X_n}>0.05$ at least once within $20{,}000$ updates, using all updates of all $1{,}024$ trajectories per law. These are finite-time escapes; an observed exit need not be permanent.

\begin{table}[htbp]
\centering
\caption{Trajectories escaping radius $0.05$ within $20{,}000$ updates.}
\label{tab:escape-counts}
\begin{tabular}{lc}
\toprule
Noise law & Escapes / total trajectories \\
\midrule
Uniform & $0/1{,}024$ \\
Gaussian & $766/1{,}024$ \\
Logistic & $1{,}023/1{,}024$ \\
Laplace & $1{,}024/1{,}024$ \\
\bottomrule
\end{tabular}
\end{table}

\end{document}